\newcommand{\id}{\mathord{\operatorname{id}}} \newcommand{\red}{\mathrm{red}}
\newcommand{\Ff}{{\mathcal F}} \newcommand{\RR}{\mathbb{R}}
\newcommand{\NN}{\mathbb{N}} \newcommand{\CC}{\mathbb{C}}
\newcommand{\GG}{\mathbb{G}} \newcommand{\VV}{\mathbb{V}}
\newcommand{\FO}{\mathbb{F}O} \newcommand{\Cst}{$C^*$-\relax}
\newcommand{\ad}{\mathop{\operatorname{ad}}}
\newcommand{\qdim}{\mathop{\operatorname{qdim}}}
\renewcommand{\dim}{\mathop{\operatorname{dim}}}
\newcommand{\Tr}{\mathop{\operatorname{Tr}}}
\newcommand{\Span}{\mathop{\operatorname{Span}}}
\newcommand{\Corep}{\mathop{\operatorname{Corep}}}
\newcommand{\Img}{\mathop{\operatorname{Im}}}
\newcommand{\Ker}{\mathop{\operatorname{Ker}}}
\newcommand{\Irr}{\mathop{\operatorname{Irr}}}
\newcommand{\Ll}{\mathord{\mathcal{L}}}
\newcommand{\ot}{\otimes}
\newcommand{\vnot}{\mathop{\bar\otimes}} \newcommand{\hyph}{{\text-}}
\theoremstyle{plain} \newtheorem{theorem}{Theorem}[section]
\newtheorem{lemma}[theorem]{Lemma}
\newtheorem{proposition}[theorem]{Proposition}
\newtheorem{corollary}[theorem]{Corollary} \newtheorem{claim}{Claim}
\theoremstyle{definition} \newtheorem{definition}[theorem]{Definition}
\newtheorem{example}[theorem]{Example} \newtheorem{remark}[theorem]{Remark}
\newtheoremstyle{forcenum}{\topsep}{\topsep}{\itshape}{}{\bfseries}{.}{5pt plus 1pt minus 1pt}{\thmname{#1}\thmnote{ \bfseries #3}}
\theoremstyle{forcenum}
\newtheorem*{lemman}{Lemma}
\title[Orthogonal free quantum groups]{A cocycle in the adjoint representation
  of \\ the orthogonal free quantum groups} \author{Pierre Fima \and Roland
  Vergnioux}
\begin{document}

\subjclass[2000]{46L65 
  (
  20G42, 
  46L10, 
  20F65)} 

\begin{abstract}
  We show that the orthogonal free quantum groups are not inner amenable and we
  construct an explicit proper cocycle weakly contained in the regular
  representation. This strengthens the result of Vaes and the second author,
  showing that the associated von Neumann algebras are full ${\rm II}_1$-factors
  and Brannan's result showing that the orthogonal free quantum groups have
  Haagerup's approximation property.  We also deduce Ozawa-Popa's property
  strong (HH) and give a new proof of Isono's result about strong solidity.
\end{abstract}

\maketitle

\section{Introduction}

The orthogonal free quantum groups $\FO_n$ are given by universal unital \Cst
algebras $A_o(n) = C^*(\FO_n)$ which were introduced by Wang
\cite{Wang_FreeProd} as follows:
\begin{displaymath}
  A_o(n) = C^*\langle v_{ij}, 1\leq i,j\leq n \mid v_{ij} = v_{ij}^* 
  \text{~and~} (v_{ij}) \text{~unitary}\rangle.
\end{displaymath}
Equipped with the coproduct defined by the formula $\Delta(v_{ij}) = \sum
v_{ik}\ot v_{kj}$, this algebra becomes a Woronowicz \Cst algebra
\cite{Woronowicz_Compact} and hence corresponds to a compact and a discrete
quantum group in duality \cite{Woronowicz_Compact,PodlesWoro_Lorentz}, denoted
respectively $O_n^+$ and $\FO_n$.

These \Cst algebras have been extensively studied since their introduction, and
it has been noticed that the discrete quantum groups $\FO_n$ share many
analytical features with the usual free groups $F_n$ from the operator algebraic
point of view. Let us just quote two such results that will be of interest for
this article:
\begin{enumerate}
\item the von Neumann algebras $\Ll(\FO_n)$ associated to $\FO_n$, $n\geq 3$,
  are full $II_1$ factors \cite{VergniouxVaes_Boundary} ;
\item the von Neumann algebras $\Ll(\FO_n)$ have Haagerup's approximation
  property \cite{Brannan_aT}.
\end{enumerate}
On the other hand there is one result that yields an operator algebraic
distinction between $\FO_n$ and $F_n$: it was shown in \cite{Vergnioux_Path}
that the first $L^2$-cohomology group of $\FO_n$ vanishes.

The purpose of this article is to present slight reinforcements and alternate
proofs of the results (1), (2)\ above. Namely we will:
\begin{enumerate}
\item show that the discrete quantum groups $\FO_n$ are not inner amenable (see
  \cite{Murray-vonNeumann_RingsOperators,Effros_InnerAmenability} for non
  abelian free groups);
\item construct an explicit proper cocycle witnessing Haagerup's property
  \cite{DawsFimaSkalskiWhite}.
\end{enumerate}

Moreover, putting these constructions together we will obtain a new result,
namely Property strong (HH) from \cite{OzawaPopa_AtMost1Cartan2}, which is a
strengthening of Haagerup's property and corresponds to the existence of a
proper cocycle in a representation which is weakly contained in the regular
representation. This sheds interesting light on the result from
\cite{Vergnioux_Path} mentioned above, according to which such a proper cocycle
cannot live in a representation strongly contained in the regular representation
(or multiples of it). Note that the cocycle corresponding to the original proof
\cite{Haagerup_RDfree} of Haagerup's property for $F_n$ does live in (multiples
of) the regular representation.

Finally we will mention applications to solidity: indeed the constructions of
the article and Property strong (HH) allow for a new proof of the strong
solidity of $\Ll(\FO_n)$ using the tools of \cite{OzawaPopa_AtMost1Cartan,
  OzawaPopa_AtMost1Cartan2}. Notice that strong solidity has already been
obtained in \cite{Isono_NoCartan} using Property AO$^+$ from
\cite{Vergnioux_Cayley,VergniouxVaes_Boundary} and the tools of
\cite{PopaVaes_UniqueCartanHyper}.

\bigskip \noindent {\bf Acknowledgments.} The second author wishes to thank Uwe
Franz for interesting discussions about the interpretation of
\cite[Theorem~4.5]{Brannan_aT} in terms of conditionally negative type
functions. The first author acknowledges the support of the ANR grants NEUMANN
and OSQPI.

\section{Notation}
\label{sec_notation}

If $\zeta$, $\xi$ are vectors in a Hilbert space $H$, we denote
$\omega_{\zeta,\xi} = (\zeta|\,\cdot\,\xi)$ the associated linear form on
$B(H)$, and we put $\omega_\zeta = \omega_{\zeta,\zeta} \in B(H)^+_*$.

We will use at some places the $q$-numbers $[k]_q = U_{k-1}(q+q^{-1}) =
(q^k-q^{-k})/(q-q^{-1})$, where $(U_k)_{k\in\NN}$ are the dilated Chebyshev
polynomials of the second kind, defined by $U_0 = 1$, $U_1 = X$ and $X U_k =
U_{k-1} + U_{k+1}$.

\bigskip

Following \cite{KustermansVaes_LCQG}, a locally compact quantum group $\GG$ is
given by a Hopf-\Cst algebra $C_0(\GG)$, with coproduct denoted $\Delta$,
satisfying certain axioms including the existence of left and right invariant
weights $\varphi$, $\varphi' : C_0(\GG)_+ \to \left[0,{+\infty}\right]$ :
\begin{displaymath}
  \varphi((\omega\ot\id)\Delta(f)) = \omega(1)\varphi(f) 
  \quad\text{and}\quad
  \varphi'((\id\ot\omega)\Delta(f)) = \omega(1)\varphi'(f),
\end{displaymath}
for all $\omega \in C_0(\GG)_+^*$ and $f \in C_0(\GG)_+$ such that
$\varphi(f)<\infty$. When there is no risk of confusion, we will denote by
$\|f\|_2^2 = \varphi(f^*f) \in\left[0,{+\infty}\right]$ the hilbertian norm
associated with $\varphi$. Using a GNS construction $(H,\Lambda_\varphi)$ for
$\varphi$, one defines the fundamental multiplicative unitary $V \in B(H\ot H)$,
denoted $W$ in \cite{KustermansVaes_LCQG}, by putting
$V^*(\Lambda_\varphi\ot\Lambda_\varphi)(f\ot g) =
(\Lambda_\varphi\ot\Lambda_\varphi)(\Delta(g)(f\ot 1))$. Recall that $\GG$ is
called unimodular, or of Kac type, if $\varphi = \varphi'$.

We will assume for the rest of the article that $\GG$ is discrete, meaning for
instance that there exists a non-zero vector $\xi_0 \in H$ such that
$V(\xi_0\ot\zeta) = \xi_0\ot\zeta$ for all $\zeta\in H$. In that case one can
always identify $C_0(\GG)$ with its image by the GNS representation on $H$,
moreover it can be reconstructed from $V$ as the norm closure of $(\id\ot
B(H)_*)(V)$, and the coproduct is given by the formula $\Delta(f) = V^*(1\ot
f)V$.

From $V$ one can also construct the dual algebra $C^*_\red(\GG)$ as the norm
closure of $(B(H)_*\ot\id)(V)$, equipped with the coproduct $\Delta(x) = V(x\ot
1)V^*$ (opposite to the dual coproduct of \cite{KustermansVaes_LCQG}). Since
$(\omega_{\xi_0}\ot\id)(V) = 1$, this \Cst algebra is unital, and it is in fact
a Woronowicz \Cst algebra \cite{Woronowicz_Compact}. Moreover the restriction of
$\omega_{\xi_0}$ is the Haar state $h$ of $C^*_\red(\GG)$. When there is no risk
of confusion, we will denote $\|x\|_2^2 = h(x^*x)$ the hilbertian norm
associated with $h$. Using $\Lambda_h : x \mapsto x\xi_0$ as a GNS construction
for $h$ we have $(\Lambda_h\ot\Lambda_h)(\Delta(x)(1\ot y)) = \Delta(x) V
(\xi_0\ot y\xi_0) = V (x\xi_0\ot y\xi_0)$, so that $V$ coincides with the
multiplicative unitary associated to $(C^*_\red(\GG),\Delta)$ as constructed in
\cite{BaajSkandalis_UnitMul}.

The von Neumann algebras associated to $\GG$ are denoted $L^\infty(\GG) =
C_0(\GG)''$ and $M = \Ll(\GG) = C^*_\red(\GG)''$, they carry natural extensions
of the respective coproducts and Haar weights.

\bigskip

Recall that a corepresentation $X$ of $V$ is a unitary $X \in B(H\ot H_X)$ such
that $V_{12}X_{13}X_{23} = X_{23}V_{12}$. There exists a so-called maximal \Cst
algebra $C^*(\GG)$ together with a corepresentation $\VV \in M(C_0(\GG)\ot
C^*(\GG))$ such that any corepresentation $X$ corresponds to a unique
$*$-representation $\pi : C^*(\GG) \to B(H_X)$ via the formula $X =
(\id\ot\pi)(\VV)$ \cite[Corollaire~1.6]{BaajSkandalis_UnitMul}. We say that $X$,
$\pi$ are unitary representations of $\GG$. When $X = V$, we obtain the regular
representation $\pi = \lambda : C^*(\GG) \to C^*_\red(\GG)$. Moreover $C^*(\GG)$
admits a unique Woronowicz \Cst algebra structure given by $(\id\ot\Delta)(\VV)
= \VV_{12}\VV_{13}$, and $\lambda$ is then the GNS representation associated
with the Haar state $h$ of $C^*(\GG)$.

On the dual side, a unitary corepresentation of $\GG$, or representation of $V$,
is a unitary $Y \in M(K(H_Y)\ot C^*(\GG))$ such that $(\id\ot\Delta)(Y) = Y_{12}
Y_{13}$, corresponding to a $*$-representation of $C_0(\GG)$ as above. We denote
$\Corep(\GG)$ the category of finite dimensional (f.-d.) unitary
corepresentations of $\GG$, and we choose a complete set $\Irr \GG$ of
representatives of irreducible ones. In the discrete case, it is known that any
corepresentation decomposes in $\Irr \GG$. We have in particular a
$*$-isomorphism $C_0(\GG) \simeq c_0{-}\bigoplus_{r\in\Irr\GG} B(H_r)$ such that
$V$ corresponds to $\bigoplus_{r\in\Irr\GG} r$. We denote $p_r \in C_0(\GG)$ the
minimal central projection corresponding to $B(H_r)$, and $C_c(\GG) \subset
C_0(\GG)$ the algebraic direct sum of the blocks $B(H_r)$. The $*$-algebra
$C_c(\GG)$, together with the restriction of $\Delta$, is a multiplier Hopf
algebra in the sense of \cite{VanDaele_Multiplier}, and we denote its antipode
by $\hat S$.

On the other hand we denote $\CC[\GG]_r \subset C^*(\GG)$ the subspace of
coefficients $(\omega\ot\id)(r)$ of an irreducible corepresentation $r$, and
$\CC[\GG]$ the algebraic direct sum of these subspaces. The coproduct on
$C^*(\GG)$ restricts to an algebraic coproduct on $\CC[\GG]$, which becomes then
a plain Hopf $*$-algebra. Recall also that $\lambda$ restricts to an injective
map on $\CC[\GG]$, hence we shall sometimes consider $\CC[\GG]$ as a subspace of
$C^*_\red(\GG)$. We will denote by $\epsilon$ and $S$ the co-unit and the
antipode of $\CC[\GG]$, and we note that $\epsilon$ extends to $C^*(\GG)$: it is
indeed the $*$-homomorphism corresponding to $X = \id_H\ot \id_\CC$.

\bigskip

We denote $(f_z)_z$ the Woronowicz characters of $C^*(\GG)$, satisfying in
particular $h(xy) = h(y$ ${(f_1*x*f_1)})$, where $\phi * x = (\id\ot\phi)\Delta(x)$
and $x * \phi = (\phi\ot\id)\Delta(x)$. We put as well $F_v = (\id\ot f_1)(v)
\in B(H_v)$ for any f.-d. unitary corepresentation $v \in B(H_v)\ot C^*(\GG)$
and $\qdim v = \Tr F_v$. Denoting $v_{\zeta,\xi} =
(\omega_{\zeta,\xi}\ot\id)(v)$, the Schur orthogonality relations
\cite[(5.15)]{Woronowicz_Matrix} read, for $v$, $w$ irreducible:
\begin{equation} \label{eq_schur_orth} h(v_{\zeta,\xi}^*w_{\zeta',\xi'}) =
  \frac{\delta_{v,w}}{\qdim v} (\zeta'|F_v^{-1}\zeta)(\xi|\xi').
\end{equation}
Note that in Woronowicz' notation we have $v_{kl} = v_{e_k,e_l}$ if $(e_i)_i$ is
a fixed orthonormal basis (ONB) of $H_v$. Similarly, we have the formula
$\varphi(a) = \qdim(v) \Tr(F_va)$ for the left Haar weight $\varphi$ and $a \in
B(H_r) \subset C_0(\GG)$ \cite[(2.13)]{PodlesWoro_Lorentz} --- note that the
coproduct on $C_0(\GG)$ constructed from $V$ in \cite{PodlesWoro_Lorentz} is
opposite to ours, so that $\varphi = h_{dR}$ with their notation.

Finally we will use the following analogue of the Fourier transform:
\begin{displaymath}
  \Ff : C_c(\GG) \to C^*_\red(\GG), ~ 
  a \mapsto (\varphi\ot\id)(V(a\ot 1)).
\end{displaymath}
One can show that it is isometric with respect to the scalar products $(a|b) =
\varphi(a^*b)$, $(x|y) = h(x^*y)$ for $a$, $b \in C_c(\GG)$, $x$, $y \in
C^*_\red(\GG)$ --- see also Section~\ref{sec_weak_reg}.

\bigskip

In this article we will be mainly concerned with orthogonal free quantum groups.
For $Q \in GL_n(\CC)$ such that $Q\bar Q \in \CC I_n$, we denote $A_o(Q)$ the
universal unital \Cst algebra generated by $n^2$ elements $v_{ij}$ subject to
the relations $Q\bar vQ^{-1} = v$ and $v^*v=vv^*=I_n\ot 1$, where $v =
(v_{ij})_{ij} \in M_n(A_o(Q))$ and $\bar v = (v_{ij}^*)_{ij}$. Equipped with the
coproduct $\Delta$ given by $\Delta(v_{ij}) = \sum v_{ik}\ot v_{kj}$, it is a
full Woronowicz \Cst algebra, and we denote $\GG = \FO(Q)$ the associated
discrete quantum group, such that $C^*(\FO(Q)) = A_o(Q)$. In the particular case
$Q = I_n$ we denote $A_o(n) = A_o(I_n)$ and $\FO_n = \FO(I_n)$.

It is known from \cite{Banica_ReprAo} that the irreducible corepresentations of
$\FO(Q)$ can be indexed $v^k$, $k\in\NN$ up to equivalence, in such a way that
$v^0 = \id_\CC\ot 1$, $v^1 = v$, and the following fusion rules hold
\begin{displaymath}
  v^k\ot v^l \simeq v^{|k-l|} \oplus v^{|k-l|+2} \oplus \cdots \oplus v^{k+l}.
\end{displaymath}
Moreover the contragredient of $v^k$ is equivalent to $v^k$ for all $k$. The
classical dimensions $\dim v^k$ satisfy $\dim v^0 = 1$, $\dim v^1 = n$ and
$n\dim v^k = \dim v^{k-1} + \dim v^{k+1}$. We denote $\rho\geq 1$ the greatest
root of $X^2-nX+1$, so that $\dim v^k = [k+1]_\rho$. Similarly we have $\qdim
v^0 = 1$ and $\qdim v^1\qdim v^k = \qdim v^{k-1} + \qdim v^{k+1}$. Moreover, if
$Q$ is normalized in such a way that $Q\bar Q = \pm I_n$, we have $\qdim v =
\Tr(Q^*Q)$. We denote $q \in \left]0,1\right]$ the smallest root of $X^2 -
(\qdim v) X + 1$, so that $\qdim v^k = [k+1]_q$.

\section{The adjoint representation of $\FO_n$}

The aim of this section is to prove that the non-trivial part $\ad^\circ$ of the
adjoint representation of $\FO_n$ factors through the regular
representation. Equivalently, we will prove that all states of the form
$\omega_\xi \circ\ad^\circ$, with $\xi\in H$, factor through $C^*_\red(\FO_n)$.
For this we will use the criterion given by Lemma~\ref{lem_growth} below, which
is analogous to Theorem~3.1 of \cite{Haagerup_RDfree}.

\subsection{Weak containment in the regular representation}
\label{sec_weak_reg}

In this section we gather some useful results which are well-known in the
classical case. We say that an element $f \in L^\infty(\GG)$ is a (normalized)
positive type function if there exists a state $\phi$ on $C^*(\GG)$ such that $f
= (\id\ot\phi)(\VV)$. The associated multiplier is $M = (\id\ot\phi)\Delta :
\CC[\GG] \to \CC[\GG]$. The next lemma shows that it extends to a completely
positive (CP) map on $C^*_\red(\GG)$ characterized by the identity $(\id\ot
M)(V) = V (f\ot 1)$. In the locally compact case, the lemma is covered by
\cite[Theorem~5.2]{Daws_CPMult}. For the convenience of the reader we include a
short and self-contained proof for the discrete case.

\begin{lemma} \label{lem_mult} Let $\phi$ be a unital linear form on $\CC[\GG]$
  and consider $M = (\id\ot\phi)\Delta : \CC[\GG] \to \CC[\GG]$. Then $M$
  extends to a CP map on $C^*_\red(\GG)$ if and only if $\phi$ extends to a
  state of $C^*(\GG)$.
\end{lemma}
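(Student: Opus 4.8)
The plan is to prove the two implications separately, using throughout the characterizing identity announced in the paragraph before the statement: the extension of $M$, if it exists, is determined by $(\id\ot M)(V) = V(f\ot 1)$, where $f = (\id\ot\phi)(\VV) \in L^\infty(\GG)$ is the positive type function attached to $\phi$. Note first that since $\phi$ is unital we have $M(1) = (\id\ot\phi)\Delta(1) = 1$, so in both directions we are comparing unital CP maps with unital states. The identity itself is a formal consequence of $M = (\id\ot\phi)\Delta$ together with the coproduct relation $(\id\ot\Delta)(V) = V_{12}V_{13}$ on $C^*_\red(\GG)$: slicing the last leg by $\phi$ yields $(\id\ot M)(V) = V\,((\id\ot\phi)(V)\ot 1)$, and on the coefficient space $\CC[\GG]$, on which $\lambda$ is injective, $(\id\ot\phi)(V)$ coincides with $f$.

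For the implication ``$\phi$ extends to a state $\Rightarrow$ $M$ is CP'', I would run the GNS construction $(H_\phi,\pi_\phi,\xi_\phi)$ of the state $\phi$ on $C^*(\GG)$ and form the associated unitary corepresentation $X = (\id\ot\pi_\phi)(\VV) \in M(C_0(\GG)\ot K(H_\phi))$, so that $f = (\id\ot\omega_{\xi_\phi})(X)$ because $\phi = \omega_{\xi_\phi}\circ\pi_\phi$. The key point is then that the extension of $M$ can be written as the slice
\begin{displaymath}
  \tilde M(x) = (\id\ot\omega_{\xi_\phi})\bigl(X(x\ot 1)X^*\bigr), \qquad x\in C^*_\red(\GG).
\end{displaymath}
Since $x\mapsto X(x\ot1)X^*$ is a $*$-homomorphism (conjugation by the unitary $X$) and $\omega_{\xi_\phi}$ is a positive functional, $\tilde M$ is manifestly CP. It remains to check that $\tilde M$ agrees with $M$, which I would do by verifying the characterizing identity $(\id\ot\tilde M)(V) = V(f\ot 1)$: this follows from the corepresentation relation $V_{12}X_{13}X_{23} = X_{23}V_{12}$ and $f = (\id\ot\omega_{\xi_\phi})(X)$. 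Agreement on the dense subspace $\CC[\GG]$ then shows $\tilde M$ is the desired CP extension and that it maps $C^*_\red(\GG)$ into itself.

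For the converse ``$M$ CP $\Rightarrow$ $\phi$ extends to a state'', I would dilate the unital CP map $M$ by Stinespring, writing $M(x) = W^*\rho(x)W$ for a $*$-representation $\rho\colon C^*_\red(\GG)\to B(\mathcal K)$ and an isometry $W\colon H\to\mathcal K$ (minimal, so that $\mathcal K = \overline{\rho(C^*_\red(\GG))WH}$). Feeding this into the multiplier identity gives $(1\ot W)^*(\id\ot\rho)(V)(1\ot W) = V(f\ot 1)$. The goal is to reorganize this data into a genuine \emph{corepresentation} $X = (\id\ot\pi)(\VV)$ of $\GG$, i.e.\ a $*$-representation $\pi$ of $C^*(\GG)$, for which $f$ is the coefficient against the unit vector $\xi = W\xi_0$; the state $\omega_\xi\circ\pi$ of $C^*(\GG)$ then restricts to $\phi$ on $\CC[\GG]$, as $\phi = \omega_{\xi}\circ\pi$ and $f = (\id\ot\omega_\xi)(X)$. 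The main obstacle is precisely this reconstruction step: a Stinespring representation $\rho$ of $C^*_\red(\GG)$ need not respect $\Delta$, so $(\id\ot\rho)(V)$ is merely a unitary and not automatically a corepresentation. The heart of the argument is to use the multiplier identity and minimality, via the pentagon relation for $V$, to produce a unitary $X$ satisfying the corepresentation relation $V_{12}X_{13}X_{23} = X_{23}V_{12}$ and then invoke the universal property of $(C^*(\GG),\VV)$; the delicate points are the leg-numbering verification of this relation and the careful passage between $C^*_\red(\GG)$ and the maximal algebra $C^*(\GG)$.
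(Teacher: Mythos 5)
Your first implication is correct and is, modulo packaging, the paper's own argument: your verification of $(\id\ot\tilde M)(V) = V(f\ot 1)$ from the corepresentation relation $V_{12}X_{13}X_{23} = X_{23}V_{12}$ is exactly Fell's absorption principle $V_{12}\VV_{13} = \VV_{23}V_{12}\VV_{23}^*$, which the paper uses to extend $\Delta$ to $\Delta' : C^*_\red(\GG) \to C^*_\red(\GG)\ot C^*(\GG)$ and to set $M = (\id\ot\phi)\Delta'$; your GNS picture $\tilde M(x) = (\id\ot\omega_{\xi_\phi})(X(x\ot 1)X^*)$ is the same map.

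The converse, however, has a genuine gap: you name the reconstruction of a corepresentation from the Stinespring data as ``the heart of the argument'' and never carry it out, and this step is not a routine pentagon/leg-numbering verification --- it is essentially the content of \cite[Theorem~5.2]{Daws_CPMult}, which the paper cites precisely as the hard locally compact result to which it is offering a short discrete-case alternative. Your diagnosis of the obstacle is moreover off: for any nondegenerate representation $\rho$ of $C^*_\red(\GG)$, the unitary $(\id\ot\rho)(V)$ \emph{is} automatically a corepresentation of $V$ (apply $\id\ot\id\ot\rho$ to the pentagon identity $V_{12}V_{13} = V_{23}V_{12}V_{23}^*$). The true obstruction is that such a corepresentation corresponds to the representation $\rho\circ\lambda$ of $C^*(\GG)$, all of whose vector states are weakly $\ell^2$, whereas the state you must produce need not be. Concretely, take $\GG$ nonamenable and $\phi = \epsilon$: then $M = \id$ is trivially CP with trivial minimal Stinespring data $(\rho,W) = (\id,\id_H)$, the corepresentation you need is the trivial one, and nothing built from the dilation data inside the reduced picture can reach it --- indeed your multiplier identity only gives $(\id\ot\omega_{W\xi_0}\circ\rho)(V) = (\id\ot\omega_{\xi_0})(V(f\ot 1)) = p_0 f$, not $f$. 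So the ``careful passage between $C^*_\red(\GG)$ and $C^*(\GG)$'' you flag is not a delicate point to be checked; it is the entire theorem, and your outline gives no mechanism for it.

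The paper's converse avoids dilation altogether via a reduction you missed: in the discrete case $C^*(\GG)$ is the enveloping \Cst algebra of the Hopf $*$-algebra $\CC[\GG]$, so it suffices to prove that $\phi$ is positive, i.e. $\phi(xx^*)\geq 0$ for all $x\in\CC[\GG]$. Writing $x = (\omega\ot\id)(v)$ for a f.-d.\ unitary corepresentation $v$, choosing a GNS space $L$ for $B(H_v)$ and forming $\hat v \in B(\CC,L)\ot C^*_\red(\GG)$, one applies the CP map $\id\ot M$ to the positive element $\hat v\hat v^*$ and uses $(\id\ot\Delta)(\hat v\hat v^*) = v_{12}\hat v_{13}\hat v_{13}^*v_{12}^*$ to get $(\id\ot M)(\hat v\hat v^*) = v(X\ot 1)v^*$ with $X = (\id\ot\phi)(\hat v\hat v^*)$; positivity forces $X\geq 0$, whence $\phi(xx^*) = \hat\omega X\hat\omega^* \geq 0$. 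This two-line positivity trick is what makes the discrete case short and self-contained, and it is the idea missing from your attempt.
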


\begin{proof}
  By Fell's absorption principle $V_{12}\VV_{13} = \VV_{23}V_{12}\VV_{23}^*$,
  $\Delta$ extends to a $*$-homomorphism $\Delta' : C^*_\red(\GG) \to
  C^*_\red(\GG)\ot C^*(\GG)$. Hence if $\phi$ extends to a state of $C^*(\GG)$,
  $M$ extends to a CP map $M = (\id\ot\phi)\Delta'$ on $C^*_\red(\GG)$. For the
  reverse implication, since $C^*(\GG)$ is the enveloping \Cst algebra of
  $\CC[\GG]$, it suffices to prove that $\phi(xx^*) \geq 0$ for all $x \in
  \CC[\GG]$.

  We choose a corepresentation $v \in B(H_v)\ot C^*_\red(\GG)$. Let $L$ be a GNS
  space for $B(H_v)$, with respect to an arbitrary given state. For $a \in
  B(H_v)$, resp. $\omega \in B(H_v)^*$, we denote $\hat a$, $\hat\omega$ the
  corresponding elements of $L = B(\CC, L)$ resp. $L^* = B(L,\CC)$, and we
  identify $B(H_v)$ with a subspace of $B(L)$ via left multiplication. Similarly
  we denote $\hat v$ the element of $B(\CC,L)\ot C^*_\red(\GG)$ corresponding to
  $v$. We have then $\hat v\hat v^* \in B(L)\ot C^*_\red(\GG)$ and
  $(\hat\omega\ot 1)\hat v\hat v^*(\hat\omega^*\ot 1) = xx^*$ if $x =
  (\omega\ot\id)(v)$.

  If $M$ is CP, the following element of $B(L)\ot C^*_\red(\GG)$ is positive:
  \begin{align*}
    (\id\ot M)(\hat v\hat v^*) &=
    (\id\ot\id\ot\phi)(\id\ot\Delta)(\hat v\hat v^*)\\
    &= (\id\ot\id\ot\phi)(v_{12}\hat v_{13}\hat v_{13}^*v_{12}^*) = v(X\ot
    1)v^*,
  \end{align*}
  where $X = (\id\ot\phi)(\hat v\hat v^*) \in B(L)$.  We conclude that $X$ is
  positive, hence $\phi(xx^*) = \hat\omega X\hat\omega^* \geq 0$ for any
  $x\in\CC[\GG]$.
\end{proof}

\bigskip

We will be particularly interested in the case when $\phi$ factors through
$C^*_\red(\GG)$, or equivalently, when $\phi$ is a weak limit of states of the
form $\sum_{i=1}^n \omega_{\xi_i}\circ\lambda$ with $\xi_i \in H$. In that case
we say that $f$, $\phi$ are weakly associated to $\lambda$, or weakly
$\ell^2$. We have the following classical lemma \cite[Lemma~1]{Fell_Weak}:

\begin{lemma} \label{lem_dense} Let $\pi : C^*(\GG) \to B(K)$ be a
  $*$-representation. Assume that there exists a subset $X \subset K$ such that
  $\overline{\Span} ~ \pi(\CC[\GG])X = K$ and $\omega_\xi\circ\pi$ is weakly
  $\ell^2$ for all $\xi\in X$. Then $\pi$ factors through $C^*_\red(\GG)$.
\end{lemma}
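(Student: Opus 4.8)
The plan is to show that $\pi$ factors through the regular representation $\lambda$ by proving that $\ker\lambda \subseteq \ker\pi$, or equivalently that $\|\pi(x)\| \leq \|\lambda(x)\|$ for all $x \in C^*(\GG)$. Since $C^*(\GG)$ is the enveloping algebra of $\CC[\GG]$, it suffices to establish this inequality for $x \in \CC[\GG]$, and in fact it is enough to control the states $\omega_\eta \circ \pi$ for $\eta$ ranging over a total subset of $K$. The hypothesis that $\overline{\Span}\,\pi(\CC[\GG])X = K$ suggests that the right family of vectors to test against is $\{\pi(a)\xi : a \in \CC[\GG],\ \xi \in X\}$, so the first step is to reduce the factorization claim to a statement about vector states associated to such vectors.

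Next I would compute, for $\eta = \pi(a)\xi$ with $a \in \CC[\GG]$ and $\xi \in X$, the vector functional $\omega_\eta \circ \pi$ applied to an element $x \in \CC[\GG]$. Unwinding the definition gives
\begin{displaymath}
  \omega_\eta(\pi(x)) = (\pi(a)\xi \mid \pi(x)\pi(a)\xi) = \omega_\xi\big(\pi(a^* x a)\big),
\end{displaymath}
so that $\omega_\eta \circ \pi = (\omega_\xi \circ \pi)(a^*\,\cdot\,a)$. Because $\omega_\xi \circ \pi$ is weakly $\ell^2$ by hypothesis, it is a weak limit of states of the form $\sum_i \omega_{\xi_i}\circ\lambda$, and the perturbation by $a^* \,\cdot\, a$ on the algebraic algebra $\CC[\GG]$ preserves this property: conjugating a functional of the form $\omega_\zeta\circ\lambda$ by $a$ yields $\omega_{\lambda(a)\zeta}\circ\lambda$, which is again of the required form. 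Hence each $\omega_\eta\circ\pi$ is itself weakly $\ell^2$, i.e.\ dominated in the appropriate sense by functionals that factor through $\lambda$.

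The main obstacle is to pass from this pointwise domination of vector functionals on the dense subspace $\pi(\CC[\GG])X$ to a genuine norm inequality $\|\pi(x)\| \leq \|\lambda(x)\|$ on all of $C^*(\GG)$. The key point is that weak containment in $\lambda$ means precisely that the GNS representation of the limiting state is weakly contained in $\lambda$, so that $\|(\omega_\eta\circ\pi)(x)\|$ is bounded using $\|\lambda(x)\|$ rather than $\|x\|$; concretely, for each such $\eta$ one obtains $|\omega_\eta(\pi(x^*x))| \leq \|\eta\|^2\,\|\lambda(x)\|^2$. Because the vectors $\eta = \pi(a)\xi$ span a dense subspace of $K$, this bound extends by density and linearity to all of $K$, giving $\|\pi(x)\|^2 = \sup_{\|\eta\|\leq 1}\omega_\eta(\pi(x^*x)) \leq \|\lambda(x^*x)\| = \|\lambda(x)\|^2$. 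This norm inequality is exactly the statement that $\pi$ factors through $C^*_\red(\GG)$, completing the argument.
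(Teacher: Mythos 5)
Your overall strategy is sound, and it is in substance the classical argument of Fell: the paper itself gives no direct proof of this lemma, but merely points to \cite[Rk~1]{Fell_Weak} (with $A = C^*(\GG)$, $T = \pi$, $\mathcal{S} = \{\lambda\}$), so your write-up supplies the argument that the paper leaves implicit. The reduction to $\ker\lambda \subseteq \ker\pi$, the computation $\omega_{\pi(a)\xi}\circ\pi = (\omega_\xi\circ\pi)(a^*\,\cdot\,a)$, and the stability of weak $\ell^2$-ness under this perturbation are all correct; for the last point the cleanest formulation is that a weak limit of functionals vanishing on the closed ideal $\ker\lambda$ again vanishes on $\ker\lambda$, and $a^*xa \in \ker\lambda$ whenever $x \in \ker\lambda$ since $\ker\lambda$ is an ideal. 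This also gives you, correctly, the single-vector bound $\omega_\eta(\pi(x^*x)) \leq \|\eta\|^2\|\lambda(x)\|^2$ for $\eta = \pi(a)\xi$.

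The one step that does not work as written is the final passage, where you claim this bound ``extends by density and linearity to all of $K$.'' The function $\eta \mapsto \omega_\eta(\pi(x^*x)) = \|\pi(x)\eta\|^2$ is quadratic in $\eta$, and your estimate is established only on the set $\{\pi(a)\xi : a \in \CC[\GG],\ \xi \in X\}$, which is in general not a subspace when $X$ contains more than one vector: for $\eta = \pi(a_1)\xi_1 + \pi(a_2)\xi_2$ with $\xi_1 \neq \xi_2$ the cross term $(\pi(a_1)\xi_1 \mid \pi(x^*x)\pi(a_2)\xi_2)$ is not controlled by the single-vector inequality, and a bound $\|\pi(x)\eta_i\| \leq \|\lambda(x)\|\|\eta_i\|$ valid on a spanning set does not transfer to sums (the triangle inequality only yields $\|\lambda(x)\|(\|\eta_1\| + \|\eta_2\|)$, not $\|\lambda(x)\|\|\eta_1+\eta_2\|$). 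The repair is immediate, and is how Fell's argument actually runs: specialize your inequality to $x \in \ker\lambda$, where it reads $\|\pi(x)\pi(a)\xi\|^2 = \omega_\xi\bigl(\pi(a^*x^*xa)\bigr) = 0$ because $a^*x^*xa \in \ker\lambda$. The kernel of the bounded operator $\pi(x)$ is a closed subspace containing $\pi(\CC[\GG])X$, hence by the density hypothesis $\pi(x) = 0$, i.e. $\ker\lambda \subseteq \ker\pi$, which is exactly the desired factorization. So you should phrase the conclusion as a kernel inclusion obtained from vanishing on the spanning set, rather than as a norm inequality obtained by ``linearity''; with that substitution your proof is complete.
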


\begin{proof}
  As noted in the original paper of Fell, the proof for groups applies in fact
  to general \Cst algebras, and in particular to discrete quantum groups. More
  precisely, take $A = C^*(\GG)$, $T = \pi$ and $\mathcal{S} = \{\lambda\}$ in
  \cite[Rk~1]{Fell_Weak}.
\end{proof}

On the other hand we say that $\phi \in \CC[\GG]^*$ is an $\ell^2$-form if it is
continuous with respect to the $\ell^2$-norm on $\CC[\GG]$, i.e. there exists
$C\in\RR$ such that $|\phi(x)|^2 \leq C h(x^*x)$ for all $x \in \CC[\GG]$. In
that case we denote $\|\phi\|_2$ the corresponding norm. Clearly, if $\phi$ is
$\ell^2$ then it is weakly $\ell^2$.

Although we will not need this in the remainder of this article, the following
lemma shows that $\ell^2$-forms can also be characterized in terms of the
associated ``functions'' in $C_0(\GG)$ by means of the left Haar weight. Note
that in the unimodular case we have simply $\varphi(ff^*) = \varphi(f^*f) =
\|f\|_2^2$.

\begin{lemma} \label{lem_l2_form} Let $\phi \in \CC[\GG]^*$ be a linear form and
  put $f = (\id\ot\phi)(V)$. Then $\phi$ is an $\ell^2$-form if and only if
  $\varphi(ff^*) < \infty$.
\end{lemma}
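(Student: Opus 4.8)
The plan is to transport the statement through the Fourier transform $\Ff$, which matches up the two Hilbert space structures at stake. The starting point is the Parseval-type identity
\begin{displaymath}
  \phi(\Ff(a)) = \varphi(fa) \qquad (a \in C_c(\GG)),
\end{displaymath}
which I would establish directly from the definitions. Since $a$ is finitely supported, $\Ff(a) = (\varphi\ot\id)(V(a\ot 1))$ lies in $\CC[\GG]$ and one may apply $\phi$ to its second leg, so that $\phi(\Ff(a)) = (\varphi\ot\phi)(V(a\ot 1)) = \varphi((\id\ot\phi)(V)\,a) = \varphi(fa)$, using $(\id\ot\phi)(V) = f$ and the fact that $a\ot 1$ only touches the first leg. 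I would also record that $\Ff$ restricts on each block $B(H_r)$ to a linear injection into $\CC[\GG]_r$ between spaces of equal finite dimension $(\dim H_r)^2$, hence is a bijective isometry from $(C_c(\GG), \|\cdot\|_2)$ onto $(\CC[\GG], \|\cdot\|_2)$.

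With this in hand, the substitution $x = \Ff(a)$ turns the defining inequality of an $\ell^2$-form into its Fourier counterpart: $\phi$ is an $\ell^2$-form if and only if $|\varphi(fa)|^2 \leq C\,\varphi(a^*a)$ for all $a \in C_c(\GG)$, i.e.\ if and only if the linear functional $a \mapsto \varphi(fa)$ is bounded on $C_c(\GG)$ for the norm $\|a\|_2 = \varphi(a^*a)^{1/2}$. It then remains to show that this boundedness is equivalent to $\varphi(ff^*) < \infty$. One implication is immediate: if $\varphi(ff^*) < \infty$ then $f^*$ defines a vector in the GNS space of $\varphi$, and Cauchy--Schwarz gives $|\varphi(fa)| = |\varphi((f^*)^* a)| \leq \varphi(ff^*)^{1/2}\,\|a\|_2$.

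For the converse I would exploit a well-chosen test element. Assuming $|\varphi(fa)| \leq C\|a\|_2$ for all $a \in C_c(\GG)$, fix a finite set $F \subset \Irr\GG$ and take $a = f^* p_F$, where $p_F = \sum_{r\in F} p_r$; this lies in $C_c(\GG)$ since $p_F$ is a finite sum of central projections onto finite-dimensional blocks. Using centrality of $p_F$ one computes $a^* a = p_F f f^* p_F = ff^* p_F$, hence $\|a\|_2^2 = \varphi(ff^* p_F) = \varphi(fa)$, a nonnegative real number. The hypothesis then reads $\varphi(ff^* p_F) \leq C\,\varphi(ff^* p_F)^{1/2}$, whence $\varphi(ff^* p_F) \leq C^2$ for every finite $F$; letting $F$ exhaust $\Irr\GG$ yields $\varphi(ff^*) \leq C^2 < \infty$. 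The main point to handle carefully is precisely this last step: one must check that $\varphi(ff^*)$ is the supremum of the increasing net $\varphi(ff^* p_F)$ --- which follows from $p_F \nearrow 1$, the centrality of the $p_F$, and the normality of the weight $\varphi$ --- so that no trace property of $\varphi$ is secretly invoked and the non-unimodular case is genuinely covered.
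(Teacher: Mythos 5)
Your proof is correct, and it in fact recovers the paper's exact identity $\|\phi\|_2^2 = \varphi(ff^*)$ (your two inequalities, with $C = \|\phi\|_2$, are both sharp), but the mechanics differ from the paper's. The paper proves the identity $(\hat S\ot\varphi)(\Delta(p_0)(1\ot f)) = f$ in $C_c(\GG)$ and uses it to derive the duality formula $\phi(x) = (\Ff(f^*)\xi_0\,|\,x\xi_0)$ when $f \in C_c(\GG)$, thereby exhibiting $\Ff(f^*)$ as the Riesz vector of $\phi$ and getting $\|\phi\|_2 = \|f^*\|_2$ on the nose, with the general case handled by summing over the blocks $\CC[\GG]_r$. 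You instead take the isometry of $\Ff$ as a black box (legitimate: it is asserted in Section~2, and the paper's remark notes it is equivalent to the identity above, so the nontrivial input is the same), add the elementary Fubini computation $\phi(\Ff(a)) = \varphi(fa)$ --- which, unlike the paper's pairing formula, needs no antipode or $p_0$ manipulation --- and then replace the exact Riesz identification by a functional-analytic argument: Cauchy--Schwarz for one direction and the test elements $a = f^*p_F$ with exhaustion $p_F \nearrow 1$ for the other. What your route buys is a shorter, more self-contained argument modulo the isometry, with the non-unimodular case visibly trace-free (your computation $a^*a = ff^*p_F$ uses only centrality of $p_F$); what the paper's route buys is the explicit duality $\langle f, x\rangle = \phi(x) = \omega(f)$ between $C_c(\GG)$ and $\CC[\GG]$, which is conceptually reusable beyond this lemma. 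One cosmetic caution: a priori $f = (\id\ot\phi)(V)$ is only an element of $\prod_r B(H_r)$, so in your easy direction the phrase ``$f^*$ defines a vector in the GNS space'' should be read blockwise --- for $a$ supported in a finite set $F$ write $\varphi(fa) = \varphi((f^*p_F)^*a)$ and apply Cauchy--Schwarz to get $|\varphi(fa)| \leq \varphi(ff^*p_F)^{1/2}\|a\|_2 \leq \varphi(ff^*)^{1/2}\|a\|_2$ --- or note that $\varphi(ff^*p_r) \geq \|fp_r\|^2$ (since $\qdim(v^r)\,\lambda_{\min}(F_{v^r}) \geq 1$) forces $f \in C_0(\GG)$ whenever $\varphi(ff^*) < \infty$; either patch is immediate and does not affect the argument.
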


\begin{proof}
  We put $p_0 = (\id\ot h)(V) \in C_c(\GG)$, which is also the central support
  of $\hat\epsilon$, and we note the following identity (see the Remark after
  the proof) in the multiplier Hopf algebra $C_c(\GG)$ :
  \begin{equation}
    \label{eq:p0ident}
    \forall f \in C_c(\GG) ~~~
    (\hat S\ot\varphi) (\Delta(p_0)(1\ot f)) = f.
  \end{equation}

  From this we can deduce that the scalar product in $H$ implements, via the
  Fourier transform, the natural duality between $C_c(\GG)$ and $\CC[\GG]$ which
  is given, for $x = (\omega\ot\id)(V) \in\CC[\GG]$ and $f = (\id\ot\phi)(V) \in
  C_c(\GG)$, by $\langle f, x\rangle = (\omega\ot\phi)(V) = \phi(x) =
  \omega(f)$. Indeed we have, using the identities $(\hat S\ot\id)(V) = V^*$ and
  $V_{13}V_{23} = (\Delta\ot\id)(V)$:
  \begin{align*}
    (x\xi_0 | \Ff(f)\xi_0) &=
    h(x^*\Ff(f)) = (\varphi\ot h)((1\ot x^*)V(f\ot 1)) \\
    &= (\bar\omega\hat S\ot\varphi\ot h) (V_{13}V_{23}(1\ot f\ot 1)) =
    (\bar\omega\hat S\ot\varphi)(\Delta(p_0)(1\ot f)) = \bar\omega(f).
  \end{align*}

  This yields, for $x \in \CC[\GG]$ and $\phi$ such that $f=(\id\ot\phi)(V) \in
  C_c(\GG)$, the formula $\phi(x) = \omega(f) = (\Ff(f^*)\xi_0|x\xi_0)$. We get
  in particular $\|\phi\|_2 = \|\Ff(f^*)\|_2 = \|f^*\|_2 =
  \sqrt{\varphi(ff^*)}$.
  
  Now for a general $\phi$ and $r \in \Irr\GG$, denote $\phi_r$ the restriction
  of $\phi$ to $\CC[\GG]_r$.  We have $\|\phi\|_2^2 = \sum_{r\in\Irr\GG}
  \|\phi_r\|_2^2$, $(\id\ot\phi_r)(V) = p_rf$ and $\varphi(ff^*) =
  \sum_{r\in\Irr\GG} \varphi(ff^*p_r)$, so that the identity $\|\phi\|_2^2 =
  \varphi(ff^*)$ still holds in $\left[0,+\infty\right]$.
\end{proof}

\begin{remark}
  The identity~\eqref{eq:p0ident} is equivalent to the fact that the Fourier
  transform $\Ff$ is isometric. Indeed a simple computation yields
  \begin{displaymath}
    h(\Ff(a)^*\Ff(b)) = \varphi[a^* 
    (\hat S\ot\varphi)(\Delta(p_0)(1\ot b))].
  \end{displaymath}
  Note that~\eqref{eq:p0ident} can be easily proved by standard computations in
  $C_c(\GG)$, and this is one convenient way of establishing the fact that $\Ff$
  is isometric.
\end{remark}

\bigskip

Now we consider the case of the discrete quantum group $\FO_n$, whose
irreducible corepresentations $v^k \in B(H_k)\ot A_o(n)$ are labeled, up to
equivalence, by integers $k$. Recall that we denote $\CC[\GG]_k$ the
corresponding coefficient subspaces, and $(U_k)_k$ the series of dilated
Chebyshev polynomials of the second kind. In \cite{Brannan_aT}, Brannan shows
that the multiplier $T_s : C^*_\red(\FO_n) \to C^*_\red(\FO_n)$ associated with
the functional
\begin{displaymath}
  \tau_s : \CC[\FO_n] \to \CC, \quad x\in\CC[\FO_n]_k \mapsto 
  \frac{U_k(s)}{U_k(n)} \epsilon(x)
\end{displaymath}
is a completely positive map as above, for every $s \in \left]2,n\right]$ ---
see also Section~\ref{sec_deformation}.

Besides it is known that $\FO_n$ satisfies the Property of Rapid Decay. More
precisely, for any $k\in\NN$ and any $x \in \CC[\GG]_k \subset C^*_\red(\GG)$ we
have $\|x\| \leq C (1+k) \|x\|_2$, where $C$ is a constant depending only on $n$
\cite{Vergnioux_RapidDecay}. On the other hand we denote by $l : \CC[\GG] \to
\CC$ the length form on $\FO_n$, whose restriction to $\CC[\GG]_k$ coincides by
definition with $k\epsilon$, and we recall that the convolution of two linear
forms $\phi$, $\psi \in \CC[\GG]^*$ is defined by $\phi * \psi =
(\phi\ot\psi)\Delta$. It is well-known that the convolution exponential $e^\phi
= \sum \phi^{*n}/n!$ is well-defined on $\CC[\GG]$, and in the case of $l$ (or
any other central form) we have simply $e^{-\lambda l}(x) = e^{-\lambda k}
\epsilon(x)$ if $x \in \CC[\GG]_k$.

Using the multipliers $T_s$ and Property RD we can follow the proof of
\cite[Theorem~3.1]{Haagerup_RDfree} and deduce the following lemma:

\begin{lemma} \label{lem_growth} A state $\phi$ on $C^*(\FO_n)$ factors through
  $C^*_\red(\FO_n)$ if and only if $e^{-\lambda l}*\phi$ is an $\ell^2$-form for
  all $\lambda > 0$.
\end{lemma}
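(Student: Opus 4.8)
The plan is to treat the two implications separately, after first recording a convolution identity that makes both the length form and the Chebyshev forms act diagonally on the blocks $\CC[\GG]_k$. Since $l$ and $\tau_s$ are central, being a scalar multiple of $\epsilon$ on each $\CC[\GG]_k$, I would first check that for any central form $c$ with $c|_{\CC[\GG]_k} = c_k\,\epsilon$ one has $(c*\phi)|_{\CC[\GG]_k} = c_k\,\phi|_{\CC[\GG]_k}$; this is a short computation using $\Delta(\CC[\GG]_k)\subset\CC[\GG]_k\ot\CC[\GG]_k$ and $(\id\ot\epsilon)(v^k)=\id$. In particular $(e^{-\lambda l}*\phi)(x) = e^{-\lambda k}\phi(x)$ and $(\tau_s*\phi)(x) = \frac{U_k(s)}{U_k(n)}\phi(x)$ for $x\in\CC[\GG]_k$. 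Writing $\phi_k$ for the restriction of $\phi$ to $\CC[\GG]_k$ and using $\|\psi\|_2^2 = \sum_k\|\psi_k\|_2^2$ (as in the proof of Lemma~\ref{lem_l2_form}), the hypothesis that $e^{-\lambda l}*\phi$ is $\ell^2$ for all $\lambda>0$ becomes the concrete growth condition $\sum_k e^{-2\lambda k}\|\phi_k\|_2^2 < \infty$ for all $\lambda>0$.

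For the forward implication, suppose $\phi = \psi\circ\lambda$ with $\psi$ a state on $C^*_\red(\FO_n)$. Then $|\phi(x)|\le\|\lambda(x)\| = \|x\|$ for $x\in\CC[\GG]$, and Property RD gives $\|x\|\le C(1+k)\|x\|_2$ for $x\in\CC[\GG]_k$; hence $\|\phi_k\|_2\le C(1+k)$ grows only polynomially. The series $\sum_k e^{-2\lambda k}\|\phi_k\|_2^2$ then converges for every $\lambda>0$, so $e^{-\lambda l}*\phi$ is an $\ell^2$-form.

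For the reverse implication, fix $s\in\left]2,n\right[$ and set $\phi_s = \tau_s*\phi$. Since Brannan's multiplier $T_s$ is CP, Lemma~\ref{lem_mult} shows $\tau_s$ extends to a state of $C^*(\FO_n)$; as $\Delta : C^*(\FO_n)\to C^*(\FO_n)\ot C^*(\FO_n)$ is a unital $*$-homomorphism, the convolution $\phi_s = (\tau_s\ot\phi)\Delta$ of two states is again a state. By the diagonal formula, $\|(\phi_s)_k\|_2 = \frac{U_k(s)}{U_k(n)}\|\phi_k\|_2$, so the crux is to bound $U_k(s)/U_k(n)$. I would write $U_k(x) = (\mu^{k+1}-\mu^{-(k+1)})/(\mu-\mu^{-1})$ with $\mu+\mu^{-1}=x$, $\mu\ge 1$, to get $U_k(s)/U_k(n)\le C_s\,(\mu_s/\rho)^k$, where $\rho>1$ and $\mu_s<\rho$ because $s<n$, so this ratio decays like $e^{-\lambda_s k}$ for some $\lambda_s>0$. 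Combined with the hypothesis this yields $\sum_k\|(\phi_s)_k\|_2^2<\infty$, i.e. $\phi_s$ is an $\ell^2$-form; since $\ell^2$-forms are weakly $\ell^2$ and $\phi_s$ is a state, $\phi_s$ factors through $C^*_\red(\FO_n)$ and hence vanishes on $\Ker\lambda$.

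Finally I would let $s\to n$: since $U_k(s)/U_k(n)\to 1$ for each fixed $k$ and the $\phi_s$ are states (uniformly bounded), pointwise convergence on the dense subspace $\CC[\GG]$ upgrades to weak-$*$ convergence $\phi_s\to\phi$ on $C^*(\FO_n)$. The set of states that vanish on the closed ideal $\Ker\lambda$ is weak-$*$ closed, so $\phi$ also vanishes on $\Ker\lambda$ and therefore factors through $C^*_\red(\FO_n)$. The main obstacle is the quantitative comparison of $U_k(s)$ and $U_k(n)$: everything hinges on the exponential gap coming from $\mu_s<\rho$, which is precisely what converts the merely polynomial control supplied by Property RD into the summability required for the $\ell^2$ estimates.
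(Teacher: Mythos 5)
Your proof is correct and follows essentially the same route as the paper: block decomposition $\|\phi\|_2^2=\sum_k\|\phi_k\|_2^2$, Property RD for the forward direction, and for the converse the exponential bound $U_k(s)/U_k(n)\leq C_s e^{-\lambda_s k}$ together with the weak-$*$ limit $\tau_s*\phi\to\phi$ of states. You merely make explicit a few points the paper leaves implicit (that $\tau_s$ extends to a state via Lemma~\ref{lem_mult}, the Chebyshev asymptotics, and the weak-$*$ closedness of the set of states vanishing on $\Ker\lambda$), which is fine.
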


\begin{proof}
  We denote by $\phi_k$ the restriction of $\phi$ to the f.-d. subspace
  $\CC[\GG]_k$. Since these subspaces are pairwise orthogonal with respect to
  $h$, we have $\|\phi\|_2^2 = \sum \|\phi_k\|_2^2$. Now for every $s<n$ there
  is a $\lambda>0$ such that $U_k(s)/U_k(n) \leq e^{-\lambda k}$ for all $k$. We
  can then write
  \begin{displaymath}
    \|\tau_s*\phi\|_2^2 = \sum \|(\tau_s*\phi)_k\|_2^2 
    = \sum \frac{U_k(s)^2}{U_k(n)^2} \|\phi_k\|_2^2 
    \leq \sum e^{-2\lambda k}\|\phi_k\|_2^2  = \|e^{-\lambda l}*\phi\|_2^2.
  \end{displaymath}

  Now if $e^{-\lambda l}*\phi$ is $\ell^2$ for all $\lambda>0$, we conclude that
  this is also the case of $\tau_s*\phi$ for all $s \in \left]2,n\right[$. In
  particular $\tau_s*\phi$ is weakly $\ell^2$ for all $s$. We have
  $(\tau_s*\phi)(x) \to \phi(x)$ as $s\to n$ for all $x\in\CC[\GG]$, and
  $\tau_s*\phi$ is a state for all $s$, hence $\tau_s*\phi \to \phi$ weakly and
  it follows that $\phi$ is weakly $\ell^2$.

  Conversely, assume that $\phi$ factors through $C^*_\red(\GG)$. Using Property
  RD we can write for any $x\in \CC[\GG]_k \subset C^*_\red(\GG)$:
  \begin{displaymath}
    |\phi(x)|\leq \|x\| \leq C(1+k) \|x\|_2,
  \end{displaymath}
  hence $\|\phi_k\|_2 \leq C(1+k)$. This clearly implies that $\|e^{-\lambda
    l}*\phi\|_2$ is finite for every $\lambda > 0$.
\end{proof}

\subsection{The adjoint representation}

Recall that we denote by $\lambda : C^*(\GG) \to B(H)$ the GNS representation of
the Haar state $h$, and consider the corresponding right regular representation
$\rho : C^*(\GG) \to B(H)$, $x \mapsto U\lambda(x)U$ given by the unitary
$U(\Lambda_h(x)) = \Lambda_h(f_1*S(x)) = \Lambda_h(S(x*f_{-1}))$. We have in
particular $\rho(v_{ij}) \Lambda_h(x) = \Lambda_h(x (f_1*v_{ji}^*))$, if $v_{ij}
= v_{e_i,e_j}$ are the coefficients of a unitary corepresentation in an
ONB. Recall that $[U C^*_\red(\GG) U, C^*_\red(\GG)] = 0$.

The adjoint representation of $\GG$ is $\ad : C^*(\GG) \to B(H)$, $x \mapsto
\sum \lambda(x_{(1)})\rho(x_{(2)})$. Here $\Delta : x \mapsto \sum x_{(1)}\ot
x_{(2)}$ is the coproduct from $C^*(\GG)$ to $C^*_\red(\GG)\ot_{\max}
C^*_\red(\GG)$. We have $(\id\ot\ad)(v) = (\id\ot\lambda)(v)(\id\ot\rho)(v)$ if
$v$ is a corepresentation of $\GG$. Here are two explicit formulae, for $x \in
\CC[\GG]$ and $v_{ij}$ coefficient of a unitary corepresentation in an ONB:
\begin{align*}
  &\ad(x)\Lambda_h(y) = \sum \Lambda_h(x_{(1)}y(f_1*S(x_{(2)}))), \\
  &\ad(v_{ij})\Lambda_h(y) = \sum \Lambda_h(v_{ik}y(f_1*v_{jk}^*)).
\end{align*}

The corepresentation of $V$ associated to $\rho$ is $W = (1\ot U)V(1\ot U)$, and
the one associated to $\ad$ is $A = VW$. Note that we have, in the notation of
\cite{BaajSkandalis_UnitMul}, $W = \Sigma \tilde V \Sigma$. In particular
\cite[Proposition~6.8]{BaajSkandalis_UnitMul} shows that $W(1\ot f)W^* =
\sigma\Delta(f)$ for $f\in C_0(\GG)$. This means that the multiplicative unitary
$W^*$ is associated to the discrete quantum group $\GG^{\textrm{co-op}}$.

\begin{lemma}
  The canonical line $\CC\xi_0 \subset H$ is invariant for the adjoint
  representation $\ad : C^*(\GG) \to B(H)$ if and only if $\GG$ is
  unimodular. In that case, $\xi_0$ is a fixed vector for $\ad$.
\end{lemma}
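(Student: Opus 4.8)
The plan is to evaluate $\ad$ on the cyclic vector directly and read off the obstruction as the failure of $F_v$ to be a scalar. First I would note that $\xi_0 = \Lambda_h(1)$ and specialize the displayed formula for $\ad(x)\Lambda_h(y)$ to $y=1$, giving $\ad(x)\xi_0 = \sum \Lambda_h\bigl(x_{(1)}(f_1*S(x_{(2)}))\bigr)$. Expanding $f_1*S(x_{(2)}) = (\id\ot f_1)\Delta(S(x_{(2)}))$ with the fact that the antipode reverses the coproduct ($\Delta\circ S = (S\ot S)\circ\Delta^{\mathrm{op}}$) and using $f_1\circ S = f_{-1}$, the expression collapses to a contraction of the middle leg of the iterated coproduct: $\ad(x)\xi_0 = \Lambda_h(a(x))$ with $a(x) = \sum x_{(1)}\,f_{-1}(x_{(2)})\,S(x_{(3)}) \in \CC[\GG]$. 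Since $\ad$ is a $*$-representation, $\CC[\GG]$ is dense in $C^*(\GG)$ and $\Lambda_h$ is injective on $\CC[\GG]$ (faithfulness of $h$), invariance of $\CC\xi_0$ is equivalent to $a(x)\in\CC 1$ for every $x\in\CC[\GG]$, hence to $a(v_{ij})\in\CC 1$ for the coefficients of every finite-dimensional unitary corepresentation $v$.

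Next I would run the matrix computation on coefficients. With $\Delta^{(2)}(v_{ij}) = \sum_{k,l} v_{ik}\ot v_{kl}\ot v_{lj}$, the identities $f_{-1}(v_{kl}) = (F_v^{-1})_{kl}$ (because $z\mapsto(\id\ot f_z)(v)$ is a one-parameter group, so $(\id\ot f_{-1})(v) = F_v^{-1}$) and $S(v_{lj}) = v_{jl}^*$ (i.e. $(S\ot\id)(v) = v^*$), one obtains $a(v_{ij}) = \sum_{k,l} v_{ik}(F_v^{-1})_{kl}v_{jl}^* = (v(F_v^{-1}\ot 1)v^*)_{ij}$. Thus the invariance condition says that every entry of $v(F_v^{-1}\ot 1)v^*$ is a scalar; applying $\id\ot\epsilon$, which is multiplicative with $(\id\ot\epsilon)(v)=I$, identifies that scalar matrix as $F_v^{-1}$. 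Hence the condition is exactly $v(F_v^{-1}\ot 1)v^* = F_v^{-1}\ot 1$, equivalently, by unitarity of $v$, that $F_v$ commutes with $v$.

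Finally I would conclude both implications. Taking $v$ irreducible, Schur's lemma forces $F_v$ to be scalar, and together with the normalization $\Tr F_v = \Tr F_v^{-1} = \qdim v$ a positive scalar matrix with equal trace and inverse-trace must be the identity, so $F_v = I$; as this holds for all irreducibles, $f_1 = \epsilon$ on $\CC[\GG]$, i.e. $\GG$ is unimodular. Conversely, if $\GG$ is unimodular then $f_{-1} = \epsilon$, so $a(x) = \sum x_{(1)}S(x_{(2)}) = \epsilon(x)1$ and therefore $\ad(x)\xi_0 = \epsilon(x)\xi_0$, exhibiting $\xi_0$ as a fixed vector. The main obstacle is the Hopf-algebraic bookkeeping of the first step, namely correctly turning $f_1*S(\cdot)$ into the middle-leg contraction $a(x)$ while tracking $f_1\circ S = f_{-1}$ and $(S\ot\id)(v) = v^*$; once the clean formula $a(v_{ij}) = (v(F_v^{-1}\ot 1)v^*)_{ij}$ is in hand, the representation-theoretic conclusion via Schur's lemma and the trace normalization is immediate.
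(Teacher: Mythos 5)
Your proof is correct, but it takes a genuinely different route from the paper's. The paper never computes the vector $\ad(x)\xi_0$ in closed form for general $\GG$: it only evaluates the diagonal coefficient $\phi(x) = (\xi_0|\ad(x)\xi_0)$ using the twisted Schur orthogonality relations \eqref{eq_schur_orth}, obtaining $(\id\ot\phi)(v) = (\dim v/\qdim v)\,\id$ for $v$ irreducible; invariance of the line then makes $\phi$ a character, so $(\id\ot\phi)(v)$ must be unitary, forcing $\dim v = \qdim v$ for all $v \in \Irr\GG$, which is unimodularity. You instead compute $\ad(x)\xi_0$ exactly, as $\Lambda_h(a(x))$ with $a(x) = \sum x_{(1)}f_{-1}(x_{(2)})S(x_{(3)})$, i.e. $a(v_{ij}) = \bigl(v(F_v^{-1}\ot 1)v^*\bigr)_{ij}$, so that invariance of $\CC\xi_0$ becomes the statement that $F_v^{-1}$ is a self-intertwiner of each irreducible $v$; Schur's lemma plus the Woronowicz normalization $\Tr F_v = \Tr F_v^{-1}$ then force $F_v = \id$. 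Your route avoids the Haar state and the orthogonality relations altogether --- only faithfulness of $h$ on $\CC[\GG]$, i.e. injectivity of $\Lambda_h$ there, is used --- and it yields strictly more information, namely the exact value of $\ad(x)\xi_0$ and the identification of the obstruction as the non-centrality of the $F$-matrices; the cost is leaning on two standard Woronowicz facts the paper does not quote, $f_1\circ S = f_{-1}$ and $\Tr F_v = \Tr F_v^{-1}$, the latter being essential to your final step $c = c^{-1}\Rightarrow c=1$. The individual steps all check out: the Sweedler manipulation via $\Delta\circ S = (S\ot S)\circ\Delta^{\mathrm{op}}$, the identity $(\id\ot f_{-1})(v) = F_v^{-1}$ from the one-parameter group property, the use of $\id\ot\epsilon$ to pin down the scalar matrix as $F_v^{-1}$, and the density/injectivity argument reducing invariance to $a(v_{ij}) \in \CC 1$. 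The converse direction (unimodular implies $\xi_0$ fixed, via $a(x) = \epsilon(x)1$) coincides with the paper's computation.
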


\begin{proof} \label{lem_inv_line} For $x \in \CC[\GG]$ one can compute
  $\ad(x)\xi_0 = \sum x_{(1)}(f_1*S(x_{(2)}))\xi_0$, which equals in the
  unimodular case $\sum x_{(1)}S(x_{(2)})\xi_0 = \epsilon(x)\xi_0$. So in that
  case $\xi_0$ is fixed.

  Furthermore, for $v\in\Irr\GG$ the previous computation together with
  Woronowicz' orthogonality relations lead to
  \begin{align*}
    (\xi_0 | \ad(v_{ij}) \xi_0) &= \sum_k h(v_{ik}(f_1*v_{jk}^*))
    = \sum_k h((v_{jk}^**f_{-1}) v_{ik}) \\
    &= \sum_k \frac{\delta_{ji}\delta_{kk}}{\qdim v} = \frac{\dim v}{\qdim v}
    \epsilon(v_{ij}).
  \end{align*}
  In particular, denoting $\phi : x \mapsto (\xi_0|\ad(x)\xi_0)$, we see that
  $(\id\ot\phi)(v) = (\dim v/\qdim v) \id$. Now if the line $\CC\xi_0$ is
  invariant, $\phi$ is a character and since $v$ is unitary we must have $\dim
  v=\qdim v$ for all $v \in \Irr\GG$. This happens exactly when $\GG$ is
  unimodular.
\end{proof}

\begin{definition}
  If $\GG$ is a unimodular discrete quantum group, we denote $\ad^\circ$ the
  restriction of the adjoint representation of $C^*(\GG)$ to the subspace
  $H^\circ = \xi_0^\bot \subset H$.
\end{definition}

Our aim in the remainder of the section is to show that $\ad^\circ$ factors
through $\lambda$ in the case of $\GG = \FO_n$, $n\geq 3$ --- or more generally
for unimodular orthogonal free quantum groups. We will need some estimates
involving coefficients of irreducible corepresentations of $\GG$.

\bigskip

In order to carry on the computations, we will need to be more precise about the
irreducible corepresentations of $\FO(Q)$. We denote by $v = v^1 \in B(\CC^n)\ot
A_o(Q)$ the fundamental corepresentation. Then we introduce recursively the
irreducible corepresentation $v^k \in B(H_k) \ot A_o(Q)$ as the unique
subcorepresentation of $v^{\ot k}$ not equivalent to any $v^l$, $l<k$. In this
way we have $H_k \subset H_1^{\ot k}$, with $H_0 = \CC$ and $H_1 = \CC^n$. We
denote by $P_k \in B(H_1^{\ot k})$ the orthogonal projection onto $H_k$. Recall
that each irreducible corepresentation of $A_o(Q)$ is equivalent to exactly one
$v^k$, and that we have the equivalence of corepresentations
\begin{displaymath}
  H_k\ot H_l \simeq H_{|k-l|} \oplus H_{|k-l|+2} \oplus \cdots 
  \oplus H_{k+l-2} \oplus H_{k+l}.
\end{displaymath}
It is known that $\dim H_k = [k+1]_\rho =
(\rho^{k+1}-\rho^{-k-1})/(\rho-\rho^{-1})$. Note in particular that we have
$D_1\rho^k\leq\dim H_k\leq D_2\rho^k$ with constants $0<D_1<D_2$ depending only
on $n$.

\bigskip

Let us also denote by $Q^k_r \in L(H_1^{\ot k})$ the orthogonal projection onto
the sum of all subspaces equivalent to $H_r$, so that $P_k = Q^k_k$. If $p_r$ is
the minimal central projection of $C_0(\FO_n)$ associated with $v^r$, then
$Q^k_r$ also corresponds to $\Delta^{k-1}(p_r) $ via the natural action of
$C_0(\FO_n)$ on $H_1$.

Note that the subspace $\Img Q^{a+b+c}_r(P_{a+b}\ot P_c)$ (resp. $\Img
Q^{a+b+c}_r(P_a\ot P_{b+c}))$ corresponds to the unique subcorepresentation of
$v^{a+b}\ot v^c$ (resp. $v^a\ot v^{b+c}$) equivalent to $v^r$, when it is
non-zero. When $r=a+b+c$ both spaces coincide with $H_{a+b+c}$. On the other
hand one can show that these subspaces of $H_1^{\ot a+b+c}$ are pairwise ``far
from each other'' when $r<a+b+c$ and $b$ is big. More precisely, Lemma~A.4 of
\cite{VergniouxVaes_Boundary} shows that
\begin{displaymath}
  \|(P_{a+b}\ot P_c)Q^{a+b+c}_r (P_a\ot P_{b+c})\| \leq C_1 q^b
\end{displaymath}
for some constant $C_1 > 0$ depending only on $q$. Indeed when $r$ varies up to
$a+b+c-2$ the maps on the left-hand side live in pairwise orthogonal subspaces
of $H_1^{\ot a+b+c}$ and sum up to $(P_{a+b}\ot P_c) (P_a\ot P_{b+c}) -
P_{a+b+c}$.

\bigskip

Since we are interested in $\ad^\circ$, we assume in the remainder of this
section that $\FO(Q)$ is unimodular : equivalently, $Q$ is a scalar multiple of
a unitary matrix, or $q\rho=1$.

In the following lemma we give an upper estimate for
$|(\omega_\zeta\ot\Tr_k)((P_l\ot P_k)Q^{k+l}_r\Sigma_{lk})|$, where $\zeta$ is
any vector in $H_l$, $\Tr_k$ is the trace on $B(H_k)$, and $\Sigma_{lk} : H_l\ot
H_k \to H_k\ot H_l$ is the flip map. Note that $\|(P_l\ot P_k) Q^{k+l}_r
\Sigma_{lk}\| \leq 1$ so that $(\dim H_k)\|\zeta\|^2$ is a trivial upper bound,
which grows exponentially with $k$. In the lemma we derive an upper bound which
is polynomial in $k$ (and even constant for $r<k+l$).

Note that it is quite natural to consider maps like $(P_l\ot
P_k)Q^{k+l}_r\Sigma_{lk}$ when studying the adjoint representation of
$A_o(n)$. A non-trivial upper bound for the norm $\|(P_1\ot
P_k)Q^{k+1}_{k-1}\Sigma_{1k}\|$ was given in
\cite[Lemma~7.11]{VergniouxVaes_Boundary}, but it does not imply our tracial
estimate below.

We choose a fixed vector $T_1 = H_1\to H_1$ with norm $\sqrt {\dim H_1}$ ---
$T_1$ is unique up to a phase, and we have $(\id_1\ot T_1^*)(T_1\ot\id_1) = \mu
\id_1$ with $\mu=\pm 1$. We choose then fixed vectors $T_m \in H_m\ot H_m$
inductively by putting $T_m = (P_m\ot P_m)(\id_{m-1}\ot T_1\ot
\id_{m-1})T_{m-1}$. If $(e_s^m)_s$ is an ONB of $H_m$, we have $T_m = \sum
e_s^m\ot \bar e_s^m$ where $(\bar e_s^m)_s$ is again an ONB of $H_m$, and
$\|T_m\| = \sqrt{\dim H_m}$. Finally, we denote more generally
\begin{displaymath}
  T_{ab}^m : H_1^{\ot a-m}\ot H_1^{\ot b-m} \to H_1^{\ot a+b}, ~~
  \zeta\ot\xi \mapsto \sum \zeta\ot e_s^m\ot \bar e_s^m\ot \xi.
\end{displaymath}

\begin{lemma} \label{lem_scalar_1} Let $(e_p^k)_p$ be an ONB of $H_k$. There
  exists a constant $C\geq 1$, depending only on $n$, such that we have, for any
  $0\leq m\leq l\leq k$, $l\neq 0$, $\zeta \in H_l$ and $r = k-l, k-l+2, \ldots,
  k+l-2m-2$:
  \begin{align*}
    \Big|\sum_p(T_{lk}^{m*}(\zeta\ot e_p^k) | 
    Q^{k+l-2m}_rT_{kl}^{m*}(e_p^k\ot\zeta))\Big| 
    &\leq C^l \|\zeta\|^2 \text{,} \\
    \Big|\sum_p(T_{lk}^{m*}(\zeta\ot e_p^k) | 
    P_{k+l-2m}T_{kl}^{m*}(e_p^k\ot\zeta))\Big|
    &\leq k C^l \|\zeta\|^2 \text{~~~ if $l\neq 2m$,} \\
    &\leq (Ck)^l \|\zeta\|^2 \text{~~~ if $l=2m$.} 
  \end{align*}
\end{lemma}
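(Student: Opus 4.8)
The plan is to read each sum as the trace over $H_k$ of a morphism built from the fusion intertwiners, and then to feed the ``far from each other'' estimate of \cite{VergniouxVaes_Boundary} into that trace. Using $\sum_p |e_p^k\rangle\langle e_p^k|=P_k$, I first rewrite both quantities, for $X=Q^{k+l-2m}_r$ and for $X=P_{k+l-2m}$, as
\[
  \Big|\sum_p (T_{lk}^{m*}(\zeta\ot e_p^k)\,|\,X\,T_{kl}^{m*}(e_p^k\ot\zeta))\Big|
  = |\Tr_{H_k}(L^*XR)| ,
\]
where $L,R\colon H_k\to H_1^{\ot k+l-2m}$ are $L(\eta)=T_{lk}^{m*}(\zeta\ot\eta)$ and $R(\eta)=T_{kl}^{m*}(\eta\ot\zeta)$. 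The essential feature is the interchange of $\zeta$ and $e_p^k$: $L$ fuses $\zeta$ to the \emph{left} of $H_k$ and $R$ to its \emph{right}, so $\Tr_{H_k}(L^*XR)$ measures the overlap of the two opposite fusion channels, closed up along the traced copy of $H_k$. Since $\zeta\in H_l$ and $e_p^k\in H_k$ I may freely insert $P_l$ and $P_k$; the maps $T_{ab}^m$ only amount to pre-fusing $m$ of the strands, so up to a shift of the block sizes the diagram is of the shape governed by the far-apart estimate.

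For the first inequality ($r\le k+l-2m-2$, a non-top component) I would bound $|\Tr_{H_k}(L^*Q^{k+l-2m}_rR)|\le (\dim H_k)\,\|L^*Q^{k+l-2m}_rR\|$ and factor $L^*Q_rR$ through the projections onto the two fusion channels, so that its norm is controlled by $\|L\|\,\|R\|\,\|(P\ot P)Q^{k+l-2m}_r(P\ot P)\|$. Arranging the diagram so that the traced copy of $H_k$ becomes the central block of the far-apart estimate makes the relevant parameter $b$ comparable to $k$, whence this last norm is $\le C_1 q^b$ with $q^b\sim\rho^{-k}$. Since $\dim H_k\sim\rho^k$ and $\|L\|\,\|R\|\le\|T_m\|^2\|\zeta\|^2=(\dim H_m)\|\zeta\|^2$, which is at most a constant times $\rho^l\|\zeta\|^2$, the factors $\rho^k$ and $\rho^{-k}$ cancel and one is left with a bound of the form $C^l\|\zeta\|^2$; the dependence on $l$ alone is most cleanly obtained by an induction that peels the strands of $\zeta$ one at a time via $H_l\subset H_{l-1}\ot H_1$, each step costing only a fixed constant.

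For the second inequality ($X=P_{k+l-2m}$, the top component) the far-apart estimate is vacuous, and the naive bound $(\dim H_k)\|L^*P R\|$ is exponential in $k$; here one genuinely needs the cancellation inside the trace. I would exploit that $P_{k+l-2m}$ is a Jones--Wenzl projection, which absorbs the projections occurring in $L$ and $R$, and then trace out the strands of $\zeta$ against it using the standard partial-trace recursion for Jones--Wenzl projections. This collapses the diagram and already reduces $\rho^k$ to a polynomial factor. The dichotomy is dictated by whether this strand-peeling recursion repeatedly returns to the top component: for $l\neq 2m$ it does so only once, producing a single factor $k$ and the bound $kC^l\|\zeta\|^2$, whereas in the balanced case $l=2m$ each of the $\sim l$ steps again meets the top component and contributes a factor $\sim k$, compounding to $(Ck)^l\|\zeta\|^2$.

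I expect two main obstacles. The first is, in the non-top case, to arrange the diagram so that the far-apart estimate applies with central block $b$ as large as $\sim k$: only then does the geometric decay $q^b$ actually beat the dimension $\dim H_k$, and making the two fusion channels line up with the $(P\ot P)Q_r(P\ot P)$ pattern after closing the $H_k$-trace requires some care. The second, in the top-component case, is to extract the trace cancellation quantitatively and, in particular, to pin down exactly why the extra factor $k$ appears once when $l\neq 2m$ but compounds to $(Ck)^l$ precisely in the balanced case $l=2m$; this is the combinatorial heart of the estimate.
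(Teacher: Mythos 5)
Your first inequality is essentially the paper's own argument, and your flagged obstacle there dissolves on inspection. Since $\zeta\in H_l\subset\Img(P_{l-m}\ot P_m)$, the map $\eta\mapsto T_{lk}^{m*}(\zeta\ot\eta)$ ranges in $\Img(P_{l-m}\ot P_{k-m})$ and likewise $\eta\mapsto T_{kl}^{m*}(\eta\ot\zeta)$ ranges in $\Img(P_{k-m}\ot P_{l-m})$, so Lemma~A.4 of \cite{VergniouxVaes_Boundary} applies directly with $a=c=l-m$ and $b=k-l$ --- not $b\sim k$. You do not need the middle block to be comparable to $k$: the target bound is $C^l\|\zeta\|^2$, so losses of size $q^{-O(l)}$ are harmless, and indeed $q^{k-l}\dim H_k\leq D_2\,q^{-l}$ together with $\|T_m\|^2=\dim H_m\leq D_2\,q^{-m}$ gives $C^l\|\zeta\|^2$ in one line, with no strand-peeling induction on $l$ at all.

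The genuine gap is the top component, and it is larger than your own caveat suggests, because the mechanisms producing $k$ versus $(Ck)^l$ are not the ones you guess. First, your sketch gives no handle on the case $2m>l$: the paper reduces it to $2m<l$ by a duality trick you do not mention, replacing $\zeta$ by $\bar\zeta=(\zeta^*\ot\id)T_l$ and thereby swapping $m\leftrightarrow l-m$. Second, for $2m<l$ the single factor $k$ does not come from ``meeting the top component once''; it comes from a descent in $k$ with step $l-2m$: one inserts a resolution of identity on $H_{l-m}\ot H_{k-l}\ot H_{l-m}$, uses cyclicity of the trace to move projections, and compares $S^k$ with $S_+^{k-(l-2m)}$ up to an additive error $O(C^l)\|\zeta\|^2$ controlled again by Lemma~A.4 (now with $a=c=m$, $b=k-l$); iterating this descent $O(k)$ times accumulates exactly one factor of $k$. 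When $l=2m$ this descent has step zero and collapses, which is why the paper needs a separate and genuinely different induction: a strengthened two-parameter statement with independent half-lengths $\bar m$, $m$ and two vectors $\zeta\in H_{2\bar m}$, $\xi\in H_{2m}$, proved by peeling one strand of the traced copy of $H_k$ via Wenzl's recursion for $P_k$, checking which terms of the recursion vanish, and tracking the surviving vectors $\xi'\in H_{2m-2}$ and $\xi''=P_{2m}\tilde\xi$; each unit drop of $m+\bar m$ costs a run of length $\leq k$ in $k$, compounding to $(Ck)^{m+\bar m}=(Ck)^l$. Your Jones--Wenzl partial-trace idea points in this direction, but the quantitative recursion --- which terms survive, why the survivors reproduce the same quantity at $(k-1,m-1)$, and why the induction must be strengthened to two parameters to close --- is precisely the content of the proof, and it is absent from the proposal.
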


\begin{proof}
  In this proof we denote $d_k = \dim H_k = \qdim v^k$ and $\id_k \in B(H_1^{\ot
    k})$ the identity map. Let us denote by $S^k_r$, $S^k_+$ the sums in the
  statement as well as $S^k = S^k_+ + \sum_r S^k_r$. We first prove the estimate
  for $S^k_r$: using the Lemma~A.4 from \cite{VergniouxVaes_Boundary} recalled
  above, with $a=c=l-m$ and $b = k-l$, we can write
  \begin{align*}
    |S^k_r| &= \Big|\sum_p(T_{lk}^{m*}(\zeta\ot e_p^k)|
    Q^{k+l-2m}_rT_{kl}^{m*}(e_p^k\ot\zeta))\Big | \\
    &\leq \sum_p |(T_{lk}^{m*}(\zeta\ot e_p^k)|
    (P_{l-m}\ot P_{k-m})Q^{k+l-2m}_r(P_{k-m}\ot P_{l-m})T_{kl}^{m*}(e_p^k\ot\zeta))| \\
    &\leq \|(P_{l-m}\ot P_{k-m})Q^{k+l-2m}_r(P_{k-m}\ot P_{l-m})\| \times 
    \|T_m\|^2 \times \sum_p \|\zeta\|^2 \|e_p^k\|^2 \\
    &\leq C_1 q^{k-l} d_k d_m ~ \|\zeta\|^2
    \leq C_1 D_2^2 q^{-l-m} \|\zeta\|^2.
  \end{align*}

  The case of $S_+^k$ is more involved. First notice that $|S_+^k- S^k| = |
  \sum_r S_r^k| \leq (l-m) C_1 D_2^2 q^{-l-m} \|\zeta\|^2$ according to the
  estimate above. This shows that the estimate for $S_+^k$ is equivalent to
  the same estimate for $S^k$ with a possibly different constant. In the case
  when $2m<l$ we will prove the estimate for $S_+^k$ using an induction over
  $k$. We first perform the following transformation using scalar products in
  $H_{l-m}\ot H_{k-l}\ot H_{l-m}$:
  \begin{align*}
    S^k &= \sum_{p,q,i,j} (T_{lk}^{m*}(\zeta\ot e_p^k)| e_i^{l-m}\ot
    e_q^{k-l}\ot e_j^{l-m})
    \times (e_i^{l-m}\ot e_q^{k-l}\ot e_j^{l-m}| T_{kl}^{m*}(e_p^k\ot\zeta)) \\
    &= \sum_{p,q,i,j,s,t} (\zeta\ot e_p^k|e_i^{l-m}\ot e_s^m\ot \bar e_s^m\ot
    e_q^{k-l}\ot e_j^{l-m}) \times \\[-2ex] & \makebox[3cm]{} \times
    (e_i^{l-m}\ot e_q^{k-l}\ot e_t^m\ot \bar e_t^m\ot e_j^{l-m}| e_p^k\ot\zeta) \\
    &= \sum_{q,i,j,s,t} (\zeta|e_i^{l-m}\ot e_s^m)\times (e_i^{l-m}\ot
    e_q^{k-l}\ot e_t^m | P_k(\bar e_s^m\ot e_q^{k-l}\ot e_j^{l-m}))
    \times (\bar e_t^m\ot e_j^{l-m}| \zeta) \\
    &= \sum_{q,s,t} (T_{l,k-l+2m}^{m*}(\zeta\ot \bar e_s^m\ot e_q^{k-l}\ot
    e_t^m) |
    P_kT_{k-l+2m,l}^{m*}(\bar e_s^m\ot e_q^{k-l}\ot e_t^m\ot\zeta)).
  \end{align*}
  One can factor $(P_{k-l+m}\ot\id_{l-m})$ out of $P_k$ and let it move to the
  right of $T_{k-l+2m,l}^{m*}$. Similarly, one can factor $(\id_{l-m}\ot
  P_{k-l+m})$ out, let it move through $T_{k-l+2m,l}^{m*}$ on the left-hand side
  of the scalar product, and take it back to the right-hand side thank to the
  sum over $q$ and $t$ (by cyclicity of the trace). This yields:
  \begin{align*}
    S^k 
    &= \sum_{q,s,t}
    (T_{l,k-l+2m}^{m*}(\zeta\ot \bar e_s^m\ot e_q^{k-l}\ot e_t^m) ~| \\[-2ex]
    & \makebox[3cm]{} |~ P_kT_{k-l+2m,l}^{m*}((P_{k-l+m}\ot\id_m) (\id_m\ot
    P_{k-l+m})(\bar e_s^m\ot e_q^{k-l}\ot e_t^m)\ot\zeta)),
  \end{align*}
  which can be compared to:
  \begin{align*}
    S_+^{k-l+2m} &= \sum_p(T_{l,k-l+2m}^{m*}(\zeta\ot e_p^{k-l+2m})|
    P_kT_{k-l+2m,l}^{m*}(e_p^{k-l+2m}\ot\zeta)) \\
    &= \sum_{q,s,t}(T_{l,k-l+2m}^{m*}(\zeta\ot\bar e_s^m\ot e_q^{k-l}\ot e_t^m)|
    P_kT_{k-l+2m,l}^{m*}(P_{k-l+2m}(\bar e_s^m\ot e_q^{k-l}\ot e_t^m)\ot\zeta)).
  \end{align*}

  More precisely, using again the Lemma~A.4 from \cite{VergniouxVaes_Boundary}
  with $a=c=m$ and $b=k-l$ we get
  \begin{align*}
    |S^k-S_+^{k-l+2m}| &\leq
    \sum_{q,s,t} \|T_m\|^2 \|(P_{k-l+m}\ot\id_m)(\id_m\ot P_{k-l+m})- P_{k-l+2m}\| 
    \|\zeta\|^2 \\
    &\leq \sum_{q,s,t} \|T_m\|^2 ~ C_1 q^{k-l} \|\zeta\|^2 
    \leq C_1 d_m^3 d_{k-l} q^{k-l} \|\zeta\|^2 \leq C_1 D_2^4 q^{-3m}
    \|\zeta\|^2.
  \end{align*}
  Using our previous estimate for the $l-m-1$ terms $S^k_r$ we obtain a
  recursive inequation for $S^k_+$:
  \begin{align*}
    |S^k_+| = |S^k-\sum_r S^k_r| &\leq
    |S^k| + C_1 D_2^2 (l-m) q^{-l-m} \|\zeta\|^2 \\ &\leq
    |S_+^{k-l+2m}| + C_1(D_2^2 l + D_2^4) q^{-3l}\|\zeta\|^2.
  \end{align*}
  For $k\leq 2l$ we use the trivial estimate $|S^k_+| \leq d_k \|T_m\|^2
  \|\zeta\|^2 \leq D_2^2 q^{-3l}\|\zeta\|^2$. Since $l-2m>0$ an easy induction
  on $k$ yields $|S_+^k| \leq klC^l\|\zeta\|^2$, which implies the estimate of
  the statement up to a change of the constant $C$.

  For the case $2m>l$ we consider the vector $\bar\zeta = (\zeta^*\ot\id) T_l$,
  which satisfies $(\id\ot T_m^*)(\zeta\ot\id) = \mu^{l-m}
  (\bar\zeta^*\ot\id)(\id\ot T_{l-m})$ and $(T_m^*\ot\id)(\id\ot\zeta) = \mu^{l-m}
  (\id\ot\bar\zeta^*)(T_{l-m}\ot\id)$. We transform $S^k$ as follows:
  \begin{align*}
    S^k &= \Tr \big((\zeta^*\ot\id_k)(\id_{l-m}\ot T_m\ot\id_{k-m})
    (\id_{k-m}\ot T_m^*\ot\id_{l-m})(\id_k\ot\zeta)\big) \\
    &= \Tr \big((\id_m\ot T_{l-m}^*\ot\id_{k-m})(\bar\zeta\ot\id_{k+l-2m})
    (\id_{k+l-2m}\ot\bar\zeta^*)(\id_{k-m}\ot T_{l-m}\ot\id_m)\big) \\
    &= \Tr \big((\id_{k+l-2m}\ot\bar\zeta^*)(\id_{k-m}\ot T_{l-m}\ot\id_m)
    (\id_m\ot T_{l-m}^*\ot\id_{k-m})(\bar\zeta\ot\id_{k+l-2m})\big), \\
    \bar S^k &= \Tr \big((\bar\zeta^*\ot\id_{k+l-2m})(\id_m\ot T_{l-m}\ot\id_{k-m})
    (\id_{k-m}\ot T_{l-m}^*\ot\id_m)(\id_{k+l-2m}\ot\bar\zeta)\big).
  \end{align*}
  As a result, $\bar S^k$ can be obtained from $S^{k+l-2m}$ by replacing $\zeta$
  with $\bar\zeta$ and $m$ with $l-m$, and we can apply the case $2m<l$ to
  obtain the result --- observe that $k+l-2m \leq k$ and $\|\bar\zeta\| =
  \|\zeta\|$.

  If $l$ is even, we still have to deal with the case $2m=l$,
  which is the subject of the next lemma.
\end{proof}

\begin{lemman}[3.8bis]
  Let $(e_p^k)_p$ be an ONB of $H_k$. There exists a constant $C\geq 1$,
  depending only on $n$, such that we have, for any $\zeta,\xi \in H_{2m}$ and
  $0 < 2m = l \leq k$:
  \begin{displaymath}
    \Big|\sum_p((\id_m\ot T_m^* \ot\id_{k-m})(\zeta\ot e_p^k) | 
    (\id_{k-m}\ot T_m^*\ot\id_m)(e_p^k\ot\xi))\Big|
    \leq (C k)^l  \|\zeta\| \|\xi\|.
  \end{displaymath}
\end{lemman}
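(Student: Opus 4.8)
The plan is to turn the sum into a single trace over $H_1^{\ot k}$ and then exploit the equivariance of the projections $P_k$. Denote by $S^k$ the sum in the statement and, for $\zeta,\xi\in H_{2m}$, introduce the operators $\Theta_\zeta,\Theta'_\xi\in B(H_1^{\ot m})$ given by $\Theta_\zeta(w)=(\id_m\ot T_m^*)(\zeta\ot w)$ and $\Theta'_\xi(w)=(T_m^*\ot\id_m)(w\ot\xi)$. A direct check shows that, as maps on $H_1^{\ot k}$, one has $(\id_m\ot T_m^*\ot\id_{k-m})(\zeta\ot\,\cdot\,)=\Theta_\zeta\ot\id_{k-m}$ and $(\id_{k-m}\ot T_m^*\ot\id_m)(\,\cdot\,\ot\xi)=\id_{k-m}\ot\Theta'_\xi$. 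Since $\sum_p e_p^k(e_p^k)^*=P_k$ and $2m\le k$ (so the two factors act on disjoint strands), summing the scalar products gives
\begin{displaymath}
  S^k=\Tr\bigl(P_k\,(\Theta_\zeta^*\ot\id_{k-2m}\ot\Theta'_\xi)\bigr).
\end{displaymath}
First I would record the relevant properties: $\Theta_\zeta=P_m\Theta_\zeta P_m$ (range and support in $H_m$), $\|\Theta_\zeta\|_{\mathrm{HS}}=\|\zeta\|$, and --- the key point --- since $\zeta$ lies in the top summand $H_{2m}$ of $H_m\ot H_m$, the operator $\Theta_\zeta$ lies in the top component $H_{2m}$ of $B(H_m)\simeq\bigoplus_{j=0}^m H_{2j}$; in particular $\Tr\Theta_\zeta=0$. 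The same holds for $\Theta'_\xi$.

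Next I would use equivariance to collapse the middle strands. Taking the partial trace of $P_k$ over the middle $k-2m$ factors is an $\FO(Q)$-equivariant operation, so $\tilde E_k:=(P_m\ot P_m)\,(\Tr_{\mathrm{mid}}P_k)\,(P_m\ot P_m)$ is an equivariant endomorphism of $H_m\ot H_m$. As $H_m\ot H_m\simeq\bigoplus_{j=0}^m H_{2j}$ with each summand of multiplicity one, the intertwiner algebra is abelian of dimension $m+1$, whence $\tilde E_k=\sum_{j=0}^m c_k^{(j)}P_{(2j)}$ for scalars $c_k^{(j)}$, where $P_{(2j)}$ is the projection onto the $H_{2j}$-isotypic part. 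Because $\Theta_\zeta^*,\Theta'_\xi$ are supported on $H_m$, the trace above only sees $\tilde E_k$, and one obtains
\begin{displaymath}
  S^k=\sum_{j=0}^m c_k^{(j)}\,\beta_j,\qquad
  \beta_j:=\Tr\bigl(P_{(2j)}(\Theta_\zeta^*\ot\Theta'_\xi)\bigr),
\end{displaymath}
with $|\beta_j|\le C_m\|\zeta\|\,\|\xi\|$ for a constant $C_m$ depending only on $n$ and $m$.

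The main obstacle is that the individual coefficients $c_k^{(j)}$ grow exponentially in $k$: by the absorption property $(P_m\ot\id_{k-2m}\ot P_m)P_k=P_k$ one has $\sum_j c_k^{(j)}\dim H_{2j}=\Tr\tilde E_k=\dim H_k\sim\rho^k$, so the bound cannot come from estimating the $c_k^{(j)}$ term by term and must rely on a cancellation. Concretely, I expect the $c_k^{(j)}$ to share a common leading exponential $\sim\dim H_k/(\dim H_m)^2$ and to differ from one another by polynomials in $k$ of degree at most $2m$. The common part is proportional to $\id_{H_m\ot H_m}$, hence annihilated by $\Tr(\Theta_\zeta^*)\Tr(\Theta'_\xi)=0$, so only the polynomial differences survive in $\sum_j c_k^{(j)}\beta_j$, yielding the bound $(Ck)^{2m}\|\zeta\|\,\|\xi\|$. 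This is transparent for $m=1$, where $\tilde E_k=b_k\id+(a_k-b_k)P_{(0)}$, the term $b_k\id$ dies against the traceless $\Theta_\zeta^*,\Theta'_\xi$, and a two-term recursion for $(a_k,b_k)$ coming from Wenzl's recursion for $P_k$ shows $a_k-b_k$ is a polynomial of degree at most $2$. The technical heart is to carry this out for general $m$: the $c_k^{(j)}$ satisfy a linear recursion in $k$ derived from Wenzl's recursion, and one must show that all differences $c_k^{(j)}-c_k^{(j')}$ are polynomials of degree $\le 2m$.

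An alternative, closer to the computation of Lemma~\ref{lem_scalar_1}, is to induct on $m$ directly on the sum, peeling off the central cup via $T_m=(P_m\ot P_m)(\id_{m-1}\ot T_1\ot\id_{m-1})T_{m-1}$ and estimating the resulting projection defects with Lemma~A.4 of \cite{VergniouxVaes_Boundary}; there each of the peeled layers contributes a factor of order $k$, accumulating to the exponent $2m$ in $(Ck)^{2m}$. Either route should give the stated estimate after polarizing in $(\zeta,\xi)$.
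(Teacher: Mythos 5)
Your reduction $S^k=\Tr\bigl(P_k(\Theta_\zeta^*\ot\id_{k-2m}\ot\Theta'_\xi)\bigr)$, the identities $\|\Theta_\zeta\|_{\mathrm{HS}}=\|\zeta\|$ and $\Tr\Theta_\zeta=\Tr\Theta'_\xi=0$, and your diagnosis that the bound must come from a cancellation against the exponentially growing total trace are all correct --- but the step your whole strategy rests on is false. Taking the plain partial trace over the \emph{middle} $k-2m$ legs is not an equivariant operation for $\FO(Q)$: closing a middle strand is not a planar operation, and $\Corep(\FO(Q))$ is not a symmetric tensor category, so there is no reason for $\Tr_{\mathrm{mid}}P_k$ to intertwine $v^m\ot v^m$ --- and in fact it does not. (This is exactly where classical intuition for $O_n$ breaks down; for the classical group your route would work.) Concretely, take $m=1$, $k=3$, $Q=I_n$, write $E=T_1T_1^*$ and $d_k=\dim H_k$, and let $E_{12}$, $E_{23}$ denote $E$ acting on legs $(1,2)$, $(2,3)$ of $H_1^{\ot 3}$. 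One has $P_3=(P_2\ot\id_1)-\frac{d_1}{d_2}(P_2\ot\id_1)E_{23}(P_2\ot\id_1)$, and a direct computation using $\Tr_2(E_{12}E_{23})=\Tr_2(E_{23}E_{12})=\Sigma$ (the flip on $H_1\ot H_1$) and $E_{12}E_{23}E_{12}=E_{12}$ yields
\begin{displaymath}
\Tr_{\mathrm{mid}}(P_3)=\Big(\frac{d_2}{d_1}-\frac{d_1}{d_2}\cdot\frac{n^2+1}{n^2}\Big)\,\id
\;+\;\frac{2}{d_2}\,\Sigma ,
\end{displaymath}
while $\Sigma\notin\mathrm{End}(v^1\ot v^1)=\Span\{\id,E\}$ for $\FO_n$. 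So $\tilde E_k$ is \emph{not} of the form $\sum_j c_k^{(j)}P_{(2j)}$, the multiplicity-one argument collapses, and with it the mechanism by which the traceless $\Theta$'s were supposed to annihilate the exponential part. A repair would require expanding $\Tr_{\mathrm{mid}}P_k$ in a larger algebra of ``annular'' elements (flips, cup--cap operators), whose dimension grows with $m$, and controlling all of their coefficients --- which is essentially the original difficulty in another guise.

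Even setting this aside, the quantitative heart is missing: the claim that the differences $c_k^{(j)}-c_k^{(j')}$ are polynomial of degree at most $2m$ in $k$ is explicitly left as an expectation, and even the $m=1$ recursion is asserted rather than derived, so the proposal is a program rather than a proof. The paper argues quite differently: it first \emph{generalizes} the statement to two independent parameters $\bar m$, $m$ on the two sides of the scalar product --- this asymmetry is forced, because the induction degrades the vector on one side only --- then writes the trace over $H_k$ as a corner of the trace over $H_{k-1}\ot H_1$, substitutes Wenzl's recursion for $P_k$, checks that besides the leading term only the summands $i=\bar m$ and $i=k-m$ survive, and obtains $|S^k(\zeta,\xi)|\le|S^{k-1}(\zeta,\xi'')|+2|S^{k-1}(\zeta,\xi')|+D_2^2q^{-2(m+\bar m)}\|\zeta\|\|\xi\|$ with $\xi'\in H_{2m-2}$, $\xi''\in H_{2m}$, concluding by a double induction over $m+\bar m$ and $k$. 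Your one-sentence alternative (peeling the cup in $T_m$ and invoking Lemma~A.4 of \cite{VergniouxVaes_Boundary}) points vaguely in this direction, but as stated it would not close either, precisely because the peeling is one-sided; also the final polarization remark is unnecessary, since the statement already involves two independent vectors $\zeta$ and $\xi$.
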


\begin{proof}
  We need in fact to prove a more general statement, where $m$ is allowed to
  take two different values $\bar m$, $m \in \NN^*$ on the left and on the right
  of the scalar product, with $\bar m + m \leq k$. For $\zeta \in H_{2\bar m}$,
  $\xi \in H_{2m}$ we denote
  \begin{displaymath}
    S^k(\zeta,\xi) = \sum_p((\id_{\bar m}\ot T_{\bar m}^* \ot\id_{k-\bar m})
    (\zeta\ot e_p^k) | (\id_{k-m}\ot T_m^*\ot\id_m)(e_p^k\ot\xi)).
  \end{displaymath}
  We first transform this quantity by writing the trace of $B(H_k)$ as the
  restriction to a corner of the trace of $B(H_{k-1}\ot H_1)$:
  \begin{align*}
    S^k(\zeta,\xi) &= \sum_{q,s}((\id_{\bar m}\ot T_{\bar m}^*\ot\id_{k-\bar m-1})
    (\zeta\ot e_q^{k-1})\ot e_s^1 | 
    (\id_{k-m}\ot T_m^*\ot\id_m )(P_k(e_q^{k-1}\ot e_s^1)\ot\xi)) \\
    &= \sum_q((\id_{\bar m}\ot T_{\bar m}^*\ot\id_{k-\bar m-1})(\zeta\ot e_q^{k-1}) | 
    (\id_{k-m}\ot
    T_m^*\ot\id_{m-1})(P_k\ot\id_{2m-1})(e_q^{k-1}\ot \tilde \xi)),
  \end{align*}
  where $\tilde\xi = \sum (e_s^1\ot \id_{2m-1}\ot e_s^{1*})(\xi) \in H_1\ot
  H_{2m-1}$ has the same norm as $\xi$. Observe that one has
  $(e_q^{k-1}\ot\tilde\xi) = (\id_k\ot P_m\ot\id_{m-1})(e_q^{k-1}\ot\tilde\xi)$
  so that one can replace $T_m^*$ on the right-hand side of the scalar product
  by $T_{m-1}^*(\id_{m-1}\ot T_1^*\ot\id_{m-1})$ or $T_1^* (\id_1\ot
  T_{m-1}^*\ot \id_1)$.

  Then we use the following generalization of Wenzl's induction formula, which
  can be found e.g. in \cite[Equation (7.4)]{VergniouxVaes_Boundary}:
  \begin{displaymath} 
    P_k = (P_{k-1}\ot\id_1) + \sum_{i=1}^{k-1} (-\mu)^{k-i} 
    \frac{d_{i-1}}{d_{k-1}} (\id_{i-1}\ot T_1\ot \id_{k-i-1}\ot T_1^*)
    (P_{k-1}\ot\id_1).
  \end{displaymath}
  Substituting $P_k$ in the formula for $S^k(\zeta,\xi)$ above, we see that most
  terms vanish. The terms corresponding to $k-m+1\leq i\leq k-1$ vanish because
  in that case both legs of $T_1$ in the factor $(\id_{k-m}\ot
  T_m^*\ot\id_{m-1})(\id_{i-1}\ot T_1\ot \id_{k-i-1}\ot T_1^*\ot \id_{2m-1})$
  hit the left leg of $T_m$ which lies in $H_m$. The terms corresponding to
  $\bar m+1\leq i\leq k-m-1$ vanish because in that case the vector $T_1$ goes
  through $(\id_{k-m}\ot T_m^*\ot\id_{m-1})$ and hits $e_q^{k-1}$ on the left of
  the scalar product.  Finally, the terms corresponding to $1\leq i\leq \bar
  m-1$ vanish because the vector $T_1$ hits $\zeta$ on the left of the scalar
  product.

  Int the term with $(P_{k-1}\ot\id_1)$ we replace $T_m^*$ by
  $T_{m-1}^*(\id_{m-1}\ot T_1^*\ot\id_{m-1})$ on the right-hand side of the
  scalar product and obtain
  \begin{displaymath}
    \sum_q((\id_{\bar m}\ot T_{\bar m}^*\ot\id_{k-\bar m-1})(\zeta\ot e_q^{k-1}) | 
    (\id_{k-m}\ot T_{m-1}^*\ot\id_{m-1})(e_q^{k-1}\ot \xi'))
  \end{displaymath}
  where $\xi' = (T_1^*\ot\id_{2m-2}) (\tilde\xi) = \sum (\bar
  e_s^{1*}\ot\id_{2m-2}\ot e_s^{1*})(\xi) \in H_{2m-2}$. We recognize
  $S^{k-1}(\zeta,\xi')$ and we note that $\|\xi'\| \leq d_1 \|\xi\|$. We remark
  also that when $m=1$ we have $\xi'=0$: indeed in the Kac case $\Sigma_{11}T_1$
  is proportional to $T_1$, and $T_1^*(H_2) = \{0\}$.

  In the term $i = k-m$, the right-hand side of the scalar product reads:
  \begin{align*}
    A &= (\id_{k-m}\ot T_1^*\ot\id_{m-1})(\id_{k-m+1}\ot T_{m-1}^*\ot\id_m) \times \\
    &\makebox[3cm]{} \times(\id_{k-m-1}\ot T_1\ot\id_{3m-2})(\id_{k-2}\ot
    T_1^*\ot\id_{2m-1})(e_q^{k-1}\ot \tilde \xi) \\
    &= \mu (\id_{k-m-1}\ot T_{m-1}^*\ot\id_m) (\id_{k-2}\ot
    T_1^*\ot\id_{2m-1})(e_q^{k-1}\ot \tilde \xi) \\
    &= \mu (\id_{k-m-1}\ot T_m^*\ot\id_m) (e_q^{k-1}\ot \tilde \xi).
  \end{align*}
  Now we decompose $\tilde\xi = \xi'' + Q_{2m-2}^{2m}(\tilde\xi)$, where $\xi''
  = P_{2m}(\tilde\xi)$. Summing over $q$, the terms corresponding to $\xi''$
  yield the quantity $S^{k-1}(\zeta,\xi'')$ and we note that $\|\xi''\| \leq
  \|\xi\|$.

  On the other hand, by Wenzl's recursion formula, $Q_{2m-2}^{2m}(\tilde\xi) =
  \tilde\xi - P_{2m}(\tilde\xi)$ decomposes as a linear combination of vectors
  of the form $(\id_j\ot T_1\ot \id_{2m-2-j})(\xi')$. Again, the contributions
  with $j\neq m-1$ vanish, either because $T_1$ hits the right leg of $T_m^*$,
  or because it hits $e_q^{k-1}$ on the left-hand side of the scalar product (if
  $k>\bar m+m$). The corresponding term in $A$, without the multiplicative
  factor $(-1)^md_{m-1}/d_{2m-1}$, is
  \begin{align*}
    A' &= \mu (\id_{k-m-1}\ot T_m^*\ot\id_m) (\id_{k+m-2}\ot T_1\ot \id_{m-1})
    (e_q^{k-1}\ot\xi') \\
    &= \mu (\id_{k-m-1}\ot T_1^*\ot\id_m) (\id_{k-m}\ot T_{m-1}^*\ot\id_{m+1}) 
    (\id_{k+m-2}\ot T_1\ot \id_{m-1}) (e_q^{k-1}\ot\xi') \\
    &= (\id_{k-m}\ot T_{m-1}^*\ot\id_{m-1}) (e_q^{k-1}\ot\xi'),
  \end{align*}
  and summing the scalar products over $q$ we recognize again
  $S^{k-1}(\zeta,\xi')$.

  Finally for $i = \bar m$ we let the vector $T_1$ from Wenzl's formula go to
  the left-hand side of the tensor product. Noting that $T_m^*(\id_m\ot
  T_1^*\ot\id_m) = T_{m+1}^*$ on $H_{m+1}\ot H_1^{\ot m+1}$ and $T_1^* (\id_1\ot
  T_{\bar m}^*\ot \id_1) = T_{\bar m+1}^*$ on $H_{\bar m+1}\ot H_{\bar m+1}$ we
  obtain
  \begin{displaymath}
    ((\id_{\bar m-1}\ot T_{\bar m+1}^*\ot\id_{k-\bar m-2})(\zeta\ot e_q^{k-1}) | 
    (\id_{k-m-1}\ot T_{m+1}^*\ot\id_{m-2})(e_q^{k-1}\ot \tilde \xi)).
  \end{displaymath}
  In absolute value, the sum over $q$ of these terms is less than 
  \begin{displaymath}
    \dim H_{k-1} \|T_{\bar m+1}\| \|T_{m+1}\| \|\zeta\| \|\xi\| = 
    d_{k-1} \sqrt{d_{\bar m+1} d_{m+1}} \|\zeta\| \|\xi\|.
  \end{displaymath}

  Putting everything together we have obtained 
  \begin{align}
    |S^k(\zeta,\xi)| &\leq |S^{k-1}(\zeta,\xi')| + {\textstyle\frac
      {d_{k-m-1}}{d_{k-1}}} ~ |S^{k-1}(\zeta,\xi'')| + {\textstyle\frac
      {d_{k-m-1}d_{m-1}}{d_{k-1}d_{2m-1}}} ~ |S^{k-1}(\zeta,\xi')| + \nonumber \\
    & \makebox[6cm]{} + {\textstyle\frac {d_{\bar m-1}}{d_{k-1}}} ~
    d_{k-1}\sqrt{d_{\bar m+1} d_{m+1}} ~ \|\zeta\| \|\xi\| \nonumber \\
    & \leq |S^{k-1}(\zeta,\xi'')| + 2 |S^{k-1}(\zeta,\xi')| + D_2^2 q^{-2(m+\bar m)}
    \|\zeta\| \|\xi\|. \nonumber
  \end{align}
  This allows to prove, by induction over $m+\bar m$, that we have
  $|S^k(\zeta,\xi)| \leq D_2^2 (3q^{-2}k)^{m+\bar m} \|\zeta\| \|\xi\|$ for all
  $\zeta \in H_{2\bar m}$, $\xi \in H_{2m}$ and all $k\geq\bar m+m$. We
  initialize the induction at $m+\bar m = 2$, i.e. $m = \bar m = 1$. Then
  $\xi'=0$, and by an easy induction over $k\geq 2$ the inequation above yields
  $|S^k(\zeta,\xi)| \leq D_2^2 q^{-4} k \|\zeta\| \|\xi\|$. Now if the result
  holds at $m+\bar m-1$, and assuming that $m>1$, we apply the induction
  hypothesis to $S^{k-1}(\zeta,\xi')$ in the inequation above and get
  \begin{align*}
    |S^k(\zeta,\xi)| &\leq |S^{k-1}(\zeta,\xi'')| + 
    D_2^2(2 (3q^{-2}(k-1))^{m+\bar m-1} + q^{-2(m+\bar m)}) \|\zeta\| \|\xi\| \\
    & \leq  |S^{k-1}(\zeta,\xi'')| + 
    D_2^2(3q^{-2})^{m+\bar m}(k-1)^{m+\bar m-1} \|\zeta\| \|\xi\|.
  \end{align*}
  The required estimate results then from an easy induction over $k$, using the
  trivial upper bound $d_k\sqrt{d_md_{\bar m}} \|\zeta\| \|\xi\|$ to initialize
  at $k=m+\bar m$. Finally, if $m=1$ but $\bar m > 1$, we proceed similarly but
  transform the left-hand side of the scalar product instead of the right-hand
  side.
\end{proof}

From Lemma~\ref{lem_scalar_1} one can deduce a similar estimate concerning
coefficients of corepresentations. Given ONB's $(e^k_i)_i$, $(e^l_a)_a$ of
$H_k$, $H_l$, we denote by $v^k_{ij}$, $v^l_{ab}$ the associated coefficients of
$v^k$, $v^l$.

\begin{lemma} \label{lem_scalar_2} There exist numbers $C_l$, depending only on
  $n$ and $l$, such that for all $k\geq l > 0$ we have
  \begin{displaymath}
    \sum_{ij} \Big| \sum_p (\Lambda_h(v_{jp}^kv_{ab}^l) |
    \Lambda_h(v_{ab}^lv_{ip}^k))\Big |^2 \leq C_lk^{2l}q^k.
  \end{displaymath}
\end{lemma}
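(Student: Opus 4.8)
The plan is to reduce the left-hand side, by Schur's orthogonality relations, to the scalar sums already controlled in Lemma~\ref{lem_scalar_1} (the bracketed quantity being, up to reordering, a matrix coefficient of the adjoint representation). Write $d_k=\dim H_k$; in the unimodular case $F=\id$, so \eqref{eq_schur_orth} reads $h((v^r_{\zeta\xi})^*v^r_{\zeta'\xi'})=d_r^{-1}(\zeta'|\zeta)(\xi|\xi')$. Since $(\Lambda_h(x)|\Lambda_h(y))=h(x^*y)$, and since $v^k_{jp}v^l_{ab}$ (resp.\ $v^l_{ab}v^k_{ip}$) is a coefficient of $v^k\ot v^l$ (resp.\ $v^l\ot v^k$), I would decompose both into their $r$-isotypic parts, $r=k-l,k-l+2,\dots,k+l$, via isometric intertwiners $w_r\colon H_r\to H_k\ot H_l$ and $w'_r\colon H_r\to H_l\ot H_k$. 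Applying \eqref{eq_schur_orth} componentwise, the cross terms in $r$ vanishing by irreducibility, produces the factorisation
\begin{displaymath}
  \sum_p(\Lambda_h(v^k_{jp}v^l_{ab})|\Lambda_h(v^l_{ab}v^k_{ip}))=\sum_r\frac{1}{d_r}\,\alpha^r_{ij}\,\beta_r,
\end{displaymath}
with $\alpha^r_{ij}=((w'_r)^*(e_a^l\ot e_i^k)|w_r^*(e_j^k\ot e_a^l))$ carrying the $i,j$-dependence and $\beta_r=\sum_p(w_r^*(e_p^k\ot e_b^l)|(w'_r)^*(e_b^l\ot e_p^k))$ carrying the sum over $p$. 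The crucial feature is that these two dependencies separate.

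Setting $r=k+l-2m$, I would rewrite the intertwiners through the morphisms $T_m$: on $H_k\ot H_l$ one has $w_r^*=c\,P_{k+l-2m}T_{kl}^{m*}$, and on $H_l\ot H_k$ one has $(w'_r)^*=c\,P_{k+l-2m}T_{lk}^{m*}$, for a normalising constant $c$ depending only on $m$ (through $\|T_m\|^2=d_m$). With this substitution $\beta_r$ equals, up to the factor $|c|^2$, the complex conjugate of the sum
\begin{displaymath}
  \sum_p(T_{lk}^{m*}(e_b^l\ot e_p^k)|P_{k+l-2m}T_{kl}^{m*}(e_p^k\ot e_b^l))
\end{displaymath}
estimated in the second inequality of Lemma~\ref{lem_scalar_1} with $\zeta=e_b^l$; hence $|\beta_r|\le C'_l\,k$ when $l\neq 2m$, and $|\beta_r|\le C'_l\,k^l$ in the single case $r=k$, $l=2m$, which is exactly the situation settled through Lemma~3.8bis. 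For the other factor, writing $A,B\colon H_k\to H_r$ for $Ae_j^k=w_r^*(e_j^k\ot e_a^l)$ and $Be_i^k=(w'_r)^*(e_a^l\ot e_i^k)$, one has $\alpha^r_{ij}=(e_i^k|B^*Ae_j^k)$; as $w_r,w'_r$ are isometries and $\|e_a^l\|=1$, the operator $B^*A$ on $H_k$ has norm at most $1$ and factors through $H_r$, so $\sum_{ij}|\alpha^r_{ij}|^2\le d_r$.

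It then remains to expand the square as
\begin{displaymath}
  \sum_{ij}\Bigl|\sum_p(\Lambda_h(v^k_{jp}v^l_{ab})|\Lambda_h(v^l_{ab}v^k_{ip}))\Bigr|^2=\sum_{r,r'}\frac{\beta_r\overline{\beta_{r'}}}{d_rd_{r'}}\sum_{ij}\alpha^r_{ij}\overline{\alpha^{r'}_{ij}},
\end{displaymath}
and to bound the inner sum by Cauchy--Schwarz using $\sum_{ij}|\alpha^r_{ij}|^2\le d_r$, which reduces the whole expression to $\bigl(\sum_r d_r^{-1/2}|\beta_r|\bigr)^2$. Inserting the estimates for $\beta_r$ together with the growth $d_r\sim\rho^r=q^{-r}$, each generic term contributes $\lesssim_l k\,q^{(k-l)/2}$, while the term $r=k$ (present only for $l$ even) contributes $\lesssim_l k^l q^{k/2}$. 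For $l$ even the latter dominates and squaring gives $\lesssim_l k^{2l}q^k$; for $l$ odd the generic terms give $\lesssim_l k^2q^k\le C_l k^{2l}q^k$. In both cases the finitely many $O(l^2)$ summands are absorbed into $C_l$, yielding the claim.

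The main obstacle is the bookkeeping of the middle step: one must verify that $w_r,w'_r$ really are proportional to $P_{k+l-2m}T_{kl}^{m*}$ and $P_{k+l-2m}T_{lk}^{m*}$, determine the constant $c$, and confirm that the two orderings $e_p^k\ot e_b^l$ and $e_b^l\ot e_p^k$ reproduce precisely the $T_{kl}^{m*}$ versus $T_{lk}^{m*}$ (and thereby the implicit flip $\Sigma_{lk}$) occurring in Lemma~\ref{lem_scalar_1}, so that $\beta_r$ is indeed the quantity estimated there. Once this identification is in place, the remaining dimension count and Cauchy--Schwarz are routine.
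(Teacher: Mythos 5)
Your proof is correct and is essentially the paper's own argument: the same Schur-orthogonality factorisation $\sum_r d_r^{-1}\alpha^r_{ij}\beta_r$ separating the $(i,j)$-dependence from the $p$-sum, the same identification of the isotypic intertwiners with normalised $P_{k+l-2m}T^{m*}_{kl}$ and $P_{k+l-2m}T^{m*}_{lk}$, the second estimate of Lemma~\ref{lem_scalar_1} (together with Lemma~3.8bis for the case $l=2m$) applied to $\beta_r$ with $\zeta=e^l_b$, and a rank/operator-norm Hilbert--Schmidt bound for the $\alpha$-factor, with only cosmetic differences in the final bookkeeping (the paper bounds $\sum_{ij}|\alpha^r_{ij}|^2$ by $\dim H_k$ and uses $r\geq k-l$, where you use $d_r$ and Cauchy--Schwarz over pairs $(r,r')$ — both give $C_lk^{2l}q^k$). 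One small correction: the normalising constant is $N^{k,l}_m$, which depends on $k$ and $l$ and not only on $m$; what makes your argument close — and what the paper verifies by citing \cite[Lemma~4.8]{Vergnioux_RapidDecay} — is that $N^{k,l}_m$ is bounded above and below uniformly in $k$ by constants depending only on $n$ and $l$, which is exactly the control your estimate needs.
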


\begin{proof}
  Let us recall some facts from the Woronowicz-Peter-Weyl theory in the Kac
  case. Since $v^k\ot v^l \simeq v^{|k-l|} {\oplus\cdots} \oplus v^{k+l-2}\oplus
  v^{k+l}$, the product of coefficients $v_{jp}^kv_{ab}^l$ decomposes as a sum
  of coefficients of $v^r$, $r = {|k-l|}$, $\ldots$, $k+l-2$, $k+l$. More
  precisely the decomposition corresponds to projecting $e^k_j\ot e^l_a$ and
  $e^k_p\ot e^l_b$ onto the subspace equivalent to $H_r$. Besides, the scalar
  product of two coefficients $(\omega_{x,y}\ot\id)(w)$,
  $(\omega_{x',y'}\ot\id)(w)$ of an irreducible corepresentation $w$ corresponds
  to the scalar product $(x'\ot y|x\ot y')$ up to a factor $(\dim w)^{-1}$,
  according to~\eqref{eq_schur_orth}.

  To compute the products
  $v_{jp}^kv_{ab}^l$, $v_{ab}^lv_{ip}^k$ we need isometric intertwiners, unique
  up to a phase:
  \begin{displaymath}
    \phi_{r}^{k,l} : H_r \to H_k\ot H_l, ~~~~
    \phi_{r}^{l,k} : H_r \to H_l\ot H_k.
  \end{displaymath}
  By irreducibility of $r$, these morphisms are respectively proportional to the
  maps $(P_k\ot P_l)T^m_{kl}P_r$ and ${(P_l\ot P_k)}T^m_{lk}P_r$, where $r =
  (k+l)-2m$ and $H_r \subset (H_{k-m}\ot H_{l-m}) \cap (H_{l-m}\ot H_{k-m})
  \subset H_1^{\ot r}$. These two maps have the same norm $N^{k,l}_m$, which is
  given by the following formula from \cite[Lemma~4.8]{Vergnioux_RapidDecay}:
  \begin{displaymath}
    \big(N^{k,l}_m\big)^2 = 
    \frac{\qdim v^k}{\qdim v^{k-m}} \prod_{q=1}^m 
    \Big(1-\frac{\qdim v^{k-m} \qdim v^{l-m-1}}{\qdim v^{k-q+1}\qdim v^{l-q}}\Big)
  \end{displaymath}
  and is known, again by \cite[Lemma~4.8]{Vergnioux_RapidDecay}, to be
  controlled as follows:
  \begin{displaymath}
    E_1 \frac{\qdim v^k\qdim v^l}{\qdim v^{k+l-2m}} \leq
    \big(N^{k,l}_m\big)^4 \leq
    E_2 \frac{\qdim v^k\qdim v^l}{\qdim v^{k+l-2m}}
  \end{displaymath}
  with $0<E_1<E_2$ independent of $k$, $l$, $m$. In particular, since $m$ takes
  only a finite number of values when $l$ is fixed, and since $(D_1/D_2) q^t
  \leq \qdim v^k/\qdim v^{k+t} \leq (D_2/D_1) q^t$ for all $k$, $t$,
  there exist numbers $0<F_{l,1}<F_{l,2}$ such that $F_{l,1} \leq N^{k,l}_m \leq
  F_{l,2}$ for all $k$, $l$, $m$.
  
  Now we compute the scalar product $(\Lambda_h(v_{jp}^kv_{ab}^l) |
  \Lambda_h(v_{ab}^lv_{ip}^k))$ as the sum over $r=k+l-2m$ of the following
  terms:
  \begin{align*}
    &(\Lambda_h(v_{jp}^kv_{ab}^l) | p_r \Lambda_h(v_{ab}^lv_{ip}^k)) = 
    \\ &\makebox[1cm]{} = \frac 1{\dim H_r} 
    (\phi_r^{l,k*} (e_a^l\ot e_i^k) | \phi_r^{k,l*}(e_j^k\ot e_a^l))
    (\phi_r^{k,l*}(e_p^k\ot e_b^l) | \phi_r^{l,k*} (e_b^l\ot e_p^k)) 
    \\ &\makebox[1cm]{} =
    \frac {1}{(N^{k,l}_m)^2 \dim H_r} 
    (\phi_r^{l,k*} (e_a^l\ot e_i^k) | \phi_r^{k,l*}(e_j^k\ot e_a^l))
    (T_{kl}^{m*}(e_p^k\ot e_b^l) | P_r T_{lk}^{m*}(e_b^l\ot e_p^k))
  \end{align*}
  Denoting $C_{ij}^r$ the sum of these terms over $p$, we can use
  Lemma~\ref{lem_scalar_1} to obtain:
  \begin{align*}
    |C_{ij}^r| & \leq \frac 1{(N^{k,l}_m)^2 \dim H_r} 
    |(\phi_r^{l,k*} (e_a^l\ot e_i^k) | \phi_r^{k,l*}(e_j^k\ot e_a^l))| \times 
    \Big| \sum_p (T_{kl}^{m*}(e_p^k\ot e_b^l) | 
    P_r T_{lk}^{m*}(e_b^l\ot e_p^k)) \Big| \\
    &\leq \frac{(Ck)^l}{(N^{k,l}_m)^2 \dim H_r} 
    |(\phi_r^{l,k*} (e_a^l\ot e_i^k) | \phi_r^{k,l*}(e_j^k\ot e_a^l))|.
  \end{align*}

  Now we sum over $i$ and $j$ to get to the stated estimate. Since $r$ takes
  $l+1$ values we have
  \begin{align*}
    \sum_{ij} \Big|\sum_r C_{ij}^r\Big|^2 &
    \leq (l+1) \sum_{ijr} |C_{ij}^r|^2 \\
    &\leq \frac {(l+1)(Ck)^{2l}}{(N_m^{k,l})^4(\dim H_r)^2} 
    \sum_{ijr} |(\phi^{l,k*}_r(e_a^l\ot e_i^k) | 
    \phi^{k,l*}_r(e_j^k\ot e_a^l))|^2.
  \end{align*}
  We recognize on the right-hand side the squares of the Hilbert-Schmidt norms
  of the maps $(\omega_{e_a^l}\ot\id)$ $(\phi_r^{l,k}\phi_r^{k,l*}\Sigma_{l,k})
  \in B(H_k)$, which are dominated by $\dim H_k$ because the corresponding
  operator norms are less than $1$. Since $r\geq k-l$ we have $(\dim H_k)/(\dim
  H_r)^2 \leq D_2 D_1^{-2} q^{k-2l}$ and we obtain
  \begin{displaymath}
    \sum_{ij} \Big|\sum_r C_{ij}^r\Big|^2 \leq
    \frac {(l+1)^2 (Ck)^{2l}}{(N_m^{k,l})^4} \frac{\dim H_k}{(\dim H_r)^2} 
    \leq ( C^{2l} D_2 D_1^{-2} F_{l,1}^{-4}(l+1)^2 q^{-2l}) \, k^{2l}q^k.
  \end{displaymath}
  \vspace{-4ex}\par
\end{proof}

\begin{theorem} \label{thm_ad_reg} Consider an orthogonal free quantum group
  $\FO(Q)$ which is unimodular.  Then the representation $\ad^\circ$ of
  $C^*(\FO(Q))$ factors through $\lambda$.
\end{theorem}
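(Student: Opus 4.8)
The plan is to deduce the statement from Lemma~\ref{lem_dense} by exhibiting a total family of vectors in $H^\circ$ whose associated states are weakly $\ell^2$. Concretely, fix $\GG = \FO(Q)$ unimodular, so that $q\rho = 1$ and the Haar state $h$ is a trace, and take
\[
X = \{\,\Lambda_h(v^l_{ab}) : l\geq 1,\ a,b\,\} \subset H^\circ .
\]
Since the coefficients $v^l_{ab}$ span $\CC[\GG]_l$ and $H = \bigoplus_{l\geq 0}\overline{\Lambda_h(\CC[\GG]_l)}$ with the $l=0$ summand equal to $\CC\xi_0$, the set $X$ spans a dense subspace of $H^\circ$; as $\ad^\circ(1)=\id$, the spanning hypothesis $\overline{\Span}\,\ad^\circ(\CC[\GG])X = H^\circ$ of Lemma~\ref{lem_dense} holds automatically. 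It therefore suffices to prove that, for each $\xi = \Lambda_h(v^l_{ab}) \in X$ (normalized to norm one), the state $\phi = \omega_\xi\circ\ad^\circ$, which coincides with $\omega_\xi\circ\ad$ because $H^\circ$ is $\ad$-invariant, is weakly $\ell^2$. For the latter I will use Lemma~\ref{lem_growth}: it is enough to show that $\|\phi_k\|_2$ grows at most polynomially in $k$, where $\phi_k$ is the restriction of $\phi$ to $\CC[\GG]_k$, since then $\|e^{-\mu l}*\phi\|_2^2 = \sum_k e^{-2\mu k}\|\phi_k\|_2^2 < \infty$ for every $\mu > 0$.

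The heart of the matter is to estimate $\|\phi_k\|_2$ using Lemma~\ref{lem_scalar_2}. Fixing an ONB $(e^k_i)_i$ of $H_k$, the coefficients $(v^k_{ij})_{ij}$ form an orthogonal basis of $\CC[\GG]_k$ with $\|v^k_{ij}\|_2^2 = 1/\dim H_k$ by the Schur relations~\eqref{eq_schur_orth}, whence $\|\phi_k\|_2^2 = (\dim H_k)\sum_{ij}|\phi(v^k_{ij})|^2$. In the Kac case $f_1=\epsilon$, so the formula for the adjoint representation simplifies to $\ad(v^k_{ij})\Lambda_h(y) = \sum_p \Lambda_h(v^k_{ip}\,y\,(v^k_{jp})^*)$, giving $\phi(v^k_{ij}) = \sum_p h((v^l_{ab})^* v^k_{ip} v^l_{ab} (v^k_{jp})^*)$. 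Using that $h$ is a trace together with $\overline{h(z)} = h(z^*)$, one checks that $\overline{\phi(v^k_{ij})}$ equals, after exchanging the indices $i$ and $j$, exactly the inner sum $\sum_p(\Lambda_h(v^k_{jp}v^l_{ab})\,|\,\Lambda_h(v^l_{ab}v^k_{ip}))$ appearing in Lemma~\ref{lem_scalar_2}. Summing squared moduli over all $i,j$ is insensitive to this relabelling, so Lemma~\ref{lem_scalar_2} yields $\sum_{ij}|\phi(v^k_{ij})|^2 \leq C_l k^{2l} q^k$ for $k\geq l$.

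Combining with $\dim H_k \leq D_2\rho^k$ gives $\|\phi_k\|_2^2 \leq D_2 C_l k^{2l}(\rho q)^k$, and here unimodularity enters decisively: since $q\rho = 1$ the exponential factor cancels and we are left with the purely polynomial bound $\|\phi_k\|_2^2 \leq D_2 C_l k^{2l}$ (the finitely many $k<l$ contribute only finite terms). Hence $\sum_k e^{-2\mu k}\|\phi_k\|_2^2 < \infty$ for all $\mu>0$, so $e^{-\mu l}*\phi$ is an $\ell^2$-form for every $\mu>0$; by Lemma~\ref{lem_growth} the state $\phi$ factors through $C^*_\red(\GG)$, i.e.\ is weakly $\ell^2$. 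Applying Lemma~\ref{lem_dense} to $\pi = \ad^\circ$ and the total set $X$ then shows that $\ad^\circ$ factors through $\lambda$.

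The genuinely hard work is packaged into Lemmas~\ref{lem_scalar_1} and~\ref{lem_scalar_2}; given these, the two points to get right are the traciality/conjugation bookkeeping identifying $\sum_{ij}|\phi(v^k_{ij})|^2$ with the quantity of Lemma~\ref{lem_scalar_2}, and the observation that unimodularity ($q\rho=1$) is precisely what turns the exponentially small tracial estimate into a polynomial bound on $\|\phi_k\|_2$ once multiplied by $\dim H_k$. That Lemma~\ref{lem_growth}, Property RD and Brannan's multipliers hold for unimodular $\FO(Q)$ and not only for $\FO_n$ is routine.
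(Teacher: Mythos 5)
Your proof is correct and follows essentially the same route as the paper: both reduce via Lemma~\ref{lem_dense} and Lemma~\ref{lem_growth} to a polynomial bound on $\|\phi_k\|_2$, identify $\sum_{ij}|\phi(v^k_{ij})|^2$ with the tracial quantity of Lemma~\ref{lem_scalar_2} using traciality of $h$, and cancel the exponential factors via $q\rho=1$. The only (immaterial) difference is your choice $\xi=\Lambda_h(v^l_{ab})$ instead of the paper's $\xi=\Lambda_h(v^{m*}_{ab})$, which costs you the extra conjugation/index-swap bookkeeping that you carry out correctly.
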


\begin{proof}
  By Lemma~\ref{lem_dense} it suffices to prove that the states $\phi =
  \omega_\xi\circ\ad : x \mapsto (\xi | \ad(x)\xi)$ on $A_o(n)$ are weakly
  associated to the regular representation, for a set of vectors $\xi$ spanning
  a dense subspace of $H^\circ$. In particular, we can assume that $\xi$ is a
  coefficient of some non-trivial irreducible corepresentation, and we will take
  in fact $\xi = \Lambda_h(v^{m*}_{ab})$.

  We already saw that $\|\phi\|_2^2 = \sum \|\phi_k\|_2^2$, where $\phi_k$ is
  the restriction of $\phi$ to the subspace $\CC[\GG]_k =
  \Span\{v_{ij}^k\}$. Moreover in the unimodular case $(\sqrt{\dim H_k}
  v_{ij}^k)_{ij}$ is an ONB of $\CC[\GG]_k$ with respect to the $\ell^2$ norm,
  and hence $\|\phi_k\|_2^2 = \dim H_k \sum |\phi(v_{ij}^k)|^2$. Using the fact
  that $h$ is a trace, we have
  \begin{align*}
    \phi(v_{ij}^k) &= \sum_p (\Lambda_h(v_{ab}^{m*}) |
    \Lambda_h(v_{ip}^kv_{ab}^{m*}v_{jp}^{k*})) =
    \sum_p (\Lambda_h(v_{ip}^{k*}v_{ab}^{m*}) | \Lambda_h(v_{ab}^{m*}v_{jp}^{k*})) \\
    &= \sum_p (\Lambda_h(v_{jp}^k v_{ab}^m) | \Lambda_h(v_{ab}^m v_{ip}^k)).
  \end{align*}
  Now we can use Lemma~\ref{lem_scalar_2} and we obtain $\|\phi_k\|_2^2 \leq
  \dim(H_k) C_m k^{2m} q^k \leq D_2 C_m k^{2m}$. In particular it is clear now
  that $\|e^{-\lambda l}*\phi\|_2^2 \leq D_2 C_m \sum e^{-2\lambda k}k^{2m} <
  \infty$ for all $\lambda > 0$, and so $\phi$ is weakly associated to the
  regular representation by Lemma~\ref{lem_growth}.
\end{proof}

\subsection{Inner amenability}

The notion of inner amenability for locally compact quantum groups has been
defined in \cite{GhaneiNasr-Isfahani_InnerAmenabilityLCQG}. We will only
consider this notion for discrete quantum groups. Recall that, when $\GG$ is a
discrete quantum group, we denote by $p_0\in L^\infty(\GG)$ the minimal central
projection corresponding to the trivial corepresentation. Following Effros
\cite{Effros_InnerAmenability} we define inner amenability as follows.

\begin{definition}\label{InnerAmenableDQG}
  A discrete quantum group $\GG$ is called \textit{inner amenable} if there
  exists a state $m\in L^\infty(\GG)^*$ such that $m(p_0)=0$ and
  \begin{displaymath}
    m((\id\ot\omega)\Delta(f))=m((\omega\ot\id)\Delta(f))
    \quad\text{for all}\quad
    \omega\in L^\infty(\GG)_*, ~~ f\in L^{\infty}(\GG).
  \end{displaymath}
\end{definition}

\begin{remark}
  Our terminology is different from
  \cite{GhaneiNasr-Isfahani_InnerAmenabilityLCQG} where they call
  \textit{strictly inner amenable} a discrete quantum group satisfying
  Definition \ref{InnerAmenableDQG}. Note however that, according to
  \cite[Remark~3.1(c)]{GhaneiNasr-Isfahani_InnerAmenabilityLCQG}, all discrete
  quantum groups are inner amenable in the sense of
  \cite[Definition~3.1]{GhaneiNasr-Isfahani_InnerAmenabilityLCQG}. In the
  classical case, the following theorem is proved in
  \cite{Effros_InnerAmenability}. The quantum case is more involved and we use
  some techniques from \cite{Tomatsu_AmenableDQG}.
\end{remark}

\begin{theorem} \label{thm_inner_amen} Let $\GG$ be a unimodular discrete
  quantum group. The following are equivalent.
  \begin{enumerate}
  \item $\GG$ is inner amenable.
  \item \label{item_ad_inv} The trivial representation $\epsilon : C^*(\GG) \to
    \CC$ is weakly contained in $\ad^\circ$.
  \end{enumerate}
  Moreover, if $\GG$ is countable and ${\mathcal L}(\GG)$ has property Gamma
  then $\GG$ is inner amenable.
\end{theorem}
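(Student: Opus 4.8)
The plan is to prove Theorem~\ref{thm_inner_amen} in two parts: first the equivalence $(1)\Leftrightarrow(2)$, and then the sufficient condition involving property Gamma. For the equivalence I would work on the von Neumann algebra level and translate the defining condition of inner amenability into a statement about the adjoint corepresentation $A = VW$. Recall that a state $m$ on $L^\infty(\GG)$ with $m(p_0)=0$ is the same data as a $\GG$-central state supported off the trivial block; the two conditions $m((\id\ot\omega)\Delta(f)) = m((\omega\ot\id)\Delta(f))$ say precisely that $m$ is invariant under the two ``translation'' actions implemented by $V$ and $W$, i.e.\ invariant under $\ad$ in the appropriate sense. So I expect that inner amenability is equivalent to the existence of an $\ad$-invariant mean on $H^\circ$, which by the standard mean/almost-invariant-vectors dictionary is equivalent to the trivial representation being weakly contained in $\ad^\circ$.

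\medskip

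Concretely, for the direction $(2)\Rightarrow(1)$ I would start from a sequence of unit vectors $\xi_i \in H^\circ$ that are almost invariant for $\ad^\circ$, meaning $\|\ad(x)\xi_i - \epsilon(x)\xi_i\| \to 0$ for $x$ in a generating set. The vector states $\omega_{\xi_i}$ restricted to $L^\infty(\GG)$ (via the identification of $H$ with the GNS space) give states on $L^\infty(\GG)$; a weak-$*$ limit point $m$ will satisfy $m(p_0)=0$ because the $\xi_i$ live in $\xi_0^\perp$, and the almost-invariance under both legs of the coproduct passes to the two centrality conditions in the limit. Here I would lean on the description $W(1\ot f)W^* = \sigma\Delta(f)$ recalled before Lemma~\ref{lem_inv_line}, together with the analogous formula $V^*(1\ot f)V = \Delta(f)$, to see that the two displayed conditions in Definition~\ref{InnerAmenableDQG} correspond exactly to invariance of $m$ under conjugation by $V$ and by $W$. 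For the converse $(1)\Rightarrow(2)$ I would run this backwards: from an invariant mean build, via a convexity/Day argument, a net of almost-invariant vectors for $\ad^\circ$, using unimodularity to control the modular aspects and to ensure we stay in $H^\circ$.

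\medskip

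The main obstacle, and the place where the quantum case genuinely differs from Effros' classical argument, is the translation between the von Neumann algebraic state $m$ on $L^\infty(\GG)$ and a positive functional on $B(H^\circ)$ that interacts correctly with $\ad^\circ$. In the classical group case the mean lives on $\ell^\infty(\Gamma)$ and the adjoint action is conjugation by the translation operators, so the dictionary is transparent; in the quantum setting one must carefully handle the distinction between $L^\infty(\GG)$ and $\Ll(\GG)$, the role of the GNS space, and the fact that $\ad$ factors through $C^*_\red$ only on $H^\circ$. This is precisely where I expect to borrow the techniques of \cite{Tomatsu_AmenableDQG}, which provide the right framework for relating means on $L^\infty(\GG)$ to almost-invariant vectors for corepresentations.

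\medskip

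For the final claim, suppose $\GG$ is countable and $\Ll(\GG)$ has property Gamma. Property Gamma furnishes a central sequence: a sequence of unitaries $u_i \in \Ll(\GG)$ with $h(u_i)=0$, $\|[u_i,x]\|_2 \to 0$ for all $x\in\Ll(\GG)$, and $\|u_i\|_2 = 1$. I would set $\xi_i = \Lambda_h(u_i) = u_i\xi_0 \in H$; the condition $h(u_i)=0$ forces $\xi_i \in \xi_0^\perp = H^\circ$. The asymptotic centrality $\|[u_i,x]\|_2\to 0$ translates directly into $\ad$-almost-invariance: since $\ad(x)\Lambda_h(y) = \sum \Lambda_h(x_{(1)} y (f_1 * S(x_{(2)})))$ and in the unimodular case this is essentially the commutator action $\Lambda_h(x) \mapsto$ conjugation, one checks that $\|\ad(x)\xi_i - \epsilon(x)\xi_i\| \to 0$. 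Thus the $\xi_i$ are almost-invariant unit vectors in $H^\circ$, giving weak containment of $\epsilon$ in $\ad^\circ$, and the conclusion follows from the already-established implication $(2)\Rightarrow(1)$. The only delicate point here is verifying that a central sequence of unitaries can be chosen with $h(u_i)=0$, but this is standard: one replaces $u_i$ by $u_i - h(u_i)1$ and normalizes, using that $|h(u_i)|$ cannot approach $1$ in a nontrivial factor with property Gamma.
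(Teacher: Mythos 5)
Your overall route is the paper's own (Tomatsu-style techniques for $1\Rightarrow 2$, weak-$*$ limits of vector states for $2\Rightarrow 1$, trace-zero central unitaries applied to $\xi_0$ for the Gamma statement), but your $(2)\Rightarrow(1)$ step has a genuine gap as written. Weak containment of $\epsilon$ in $\ad^\circ$ gives unit vectors $\xi_n\in H^\circ$ with $\|A(\eta\ot\xi_n)-\eta\ot\xi_n\|\to 0$ for all $\eta$, where $A=VW$; equivalently $W(\eta\ot\xi_n)\approx V^*(\eta\ot\xi_n)$. If you set $m=\lim\omega_{\xi_n}$ directly, then using $\Delta(f)=V^*(1\ot f)V$ and $\sigma\Delta(f)=W(1\ot f)W^*$ the two sides of Definition~\ref{InnerAmenableDQG} become $\lim\,(V(\eta\ot\xi_n)\,|\,(1\ot f)V(\eta\ot\xi_n))$ and $\lim\,(W^*(\eta\ot\xi_n)\,|\,(1\ot f)W^*(\eta\ot\xi_n))$, and to equate them you need $\|V(\eta\ot\xi_n)-W^*(\eta\ot\xi_n)\|\to 0$, i.e.\ almost invariance under $WV$ --- not under $VW$, which is what your hypothesis provides. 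Since $V$ and $W$ do not commute, the step fails as stated. The paper repairs exactly this point by passing to the modular-conjugated vectors $\tilde\xi_n=J_\varphi\xi_n$ and using $(J_h\ot J_\varphi)V(J_h\ot J_\varphi)=V^*$ and $(J_h\ot J_\varphi)W(J_h\ot J_\varphi)=W^*$, which converts $VW$-almost-invariance at $J_h\eta\ot\xi_n$ into the needed estimate $\|W^*(\eta\ot\tilde\xi_n)-V(\eta\ot\tilde\xi_n)\|\to 0$; orthogonality to $\xi_0$ is preserved, so $m(p_0)=0$ survives. Your phrase ``using unimodularity to control the modular aspects'' appears only in the other direction, so this twist is genuinely missing from your plan.

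Elsewhere your proposal matches the paper but leaves its hardest content unaddressed. For $(1)\Rightarrow(2)$, deferring to Tomatsu is indeed the paper's strategy, but the actual work there is substantial: after the weak-$*$/convexity (Day) argument one needs a cut-off reducing to $\omega_n=\omega_{\Lambda_\varphi(f_n)}$ with $f_n\in C_c(\GG)_+$, $p_0f_n=0$, $\|f_n\|_2=1$, and then two quantitative claims (adaptations of Tomatsu's Lemmas 3.14 and 3.16, the second via the Powers--St{\o}rmer inequality) converting $\|\omega_n*\omega-\omega*\omega_n\|\to 0$ into $\|A(\Lambda_\varphi(e^\alpha_{ij})\ot\Lambda_\varphi(f_n))-\Lambda_\varphi(e^\alpha_{ij})\ot\Lambda_\varphi(f_n)\|\to 0$; your sketch locates the difficulty correctly but supplies none of it. In the Gamma part your argument is the paper's modulo cosmetics: the paper takes $\xi_n=u_n^*\xi_0$, makes the commutator bounds quantitative against $\dim(u^l)^2$ to get a uniform $1/n$ estimate, and uses $[\lambda(C^*_\red(\GG)),\rho(C^*_\red(\GG))]=0$ together with the fact (Lemma on the invariant line) that $\xi_0$ is $\ad$-fixed precisely in the unimodular case, which is the rigorous form of your ``conjugation'' heuristic. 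One small slip there: normalizing $u_i-h(u_i)1$ does not produce unitaries; fortunately trace-zero central unitaries are part of the standard definition of property Gamma, and unitarity is in any case inessential --- only $\|u_n\|_2=1$, $h(u_n)=0$ and asymptotic centrality are used.
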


\begin{proof}
  $1\Rightarrow 2$. The proof of this implication is similar to the one of the
  implication $1\Rightarrow 2$ in \cite[Theorem~3.8]{Tomatsu_AmenableDQG}, and
  moreover we are in the unimodular case. Let us give a sketch of the
  proof. Putting $A=(\id\ot\ad)(\mathbb{V}) = VW$, it is known that $\epsilon$
  is weakly contained in $\ad^\circ$ if and only if there exists a net of unit
  vectors $\xi_n\in H^\circ = \xi_0^\bot$ such that $\| A(\eta\ot\xi_n) -
  \eta\ot\xi_n\|  \rightarrow 0$ for all $\eta\in H$.

  Let $J_{\varphi}$ and $J_h$ be the modular conjugations of $\varphi$ and $h$
  respectively, with respect to the GNS constructions as in
  Section~\ref{sec_notation}. It is known that $(J_h\ot J_{\varphi})V(J_h\ot
  J_{\varphi})=V^*$, and since $U = J_\varphi J_h = J_h J_\varphi$ in the
  discrete case we also have $(J_h\ot J_{\varphi})W(J_h\ot
  J_{\varphi})=W^*$. Moreover $W = (1\ot U)V(1\ot U) = (J_h\ot J_h) V^* (J_h\ot
  J_h)$, and \cite[Proposition~2.15]{KustermansVaes_VNQG} yields the second
  formula below:
  \begin{equation}\label{MultiplicativeUnitary}
    \begin{aligned}
      V^*(\Lambda_{\varphi}(g)\ot\Lambda_{\varphi}(f)) &=
      (\Lambda_{\varphi}\ot\Lambda_{\varphi})(\Delta(f)(g\ot 1)) \,\,\text{and}
      \\ W(\Lambda_{\varphi}(g)\ot\Lambda_{\varphi}(f)) &=
      (\Lambda_{\varphi}\ot\Lambda_{\varphi})(\sigma\Delta(f)(g\ot 1)),
    \end{aligned}
  \end{equation}
  for all $f,g\in\mathcal{N}_{\varphi}$, and $\sigma$ the flip map on
  $L^\infty(\GG)\vnot L^\infty(\GG)$.  Note that for the second formula to hold
  in the non-unimodular case one has to replace $\Lambda_\varphi$ by a GNS
  construction for the right Haar weight $\varphi'$.

  We use also the identification $C_0(\GG) = \bigoplus_\alpha B(H_\alpha)$,
  where $\alpha$ runs over $\Irr\GG$, and we recall that $\varphi = \sum_\alpha
  \dim(H_\alpha)\Tr_\alpha$ in this identification. We denote
  $(e_{ij}^\alpha)_{ij}$ the matrix units of $B(H_\alpha)$ associated to a
  chosen ONB $(e_i^\alpha)_i$ of $H_\alpha$.  Recall finally the notation
  $\omega_{\zeta,\xi} \in B(H)_*$ for $\zeta$, $\xi \in H$. In what follows we
  restrict these linear forms to $L^\infty(\GG)$ and we convolve them according
  to $\Delta$. Now we have:
       
  \begin{claim} \label{claim_inner_1} Let $f=f^*\in \mathcal{N}_{\varphi}
    \subset L^\infty(\GG)$ and put
    $X=\sigma\Delta(f^2)-\Delta(f^2)=\sum_{\alpha,i,j}e^{\alpha}_{ij}\ot
    X(\alpha)_{ ij}$. For all $\alpha\in\Irr(\GG)$ we have:
    \begin{displaymath}
      \| \omega_{\Lambda_{\varphi}(f)}*
      \omega_{J_h\Lambda_{\varphi}(e^{\alpha}_{i1}),J_h\Lambda_{\varphi}(e^{\alpha}_{j1})}-
      \omega_{J_h\Lambda_{\varphi}(e^{\alpha}_{i1}),J_h\Lambda_{\varphi}(e^{\alpha}_{j1})}*
      \omega_{\Lambda_{\varphi}(f)}\|  
      \geq \varphi(e^{\alpha}_{11})\varphi(|X(\alpha)_{ji}|).
    \end{displaymath}
  \end{claim}

\begin{proof}[Proof of Claim~\ref{claim_inner_1}.]
  Let $z$ be the adjoint of the phase of $X(\alpha)_{ji}$ and write
  \begin{displaymath}
    \begin{split}
      \omega_{\Lambda_{\varphi}(f)}*
      \omega_{J_h\Lambda_{\varphi}(e^{\alpha}_{i1}),J_h\Lambda_{\varphi}(e^{\alpha}_{j1})}(z)-
      \omega_{J_h\Lambda_{\varphi}(e^{\alpha}_{i1}),J_h\Lambda_{\varphi}(e^{\alpha}_{j1})}*
      \omega_{\Lambda_{\varphi}(f)}(z)\\
      =\left(\omega_{J_h\Lambda_{\varphi}(e^{\alpha}_{i1}),J_h\Lambda_{\varphi}(e^{\alpha}_{j1})}
        \ot\omega_{\Lambda_{\varphi}(f)}\right)(\sigma\Delta(z)-\Delta(z)).
    \end{split}
  \end{displaymath}
  Using the formulas $\sigma\Delta(z)=W(1\ot z)W^*$, $\Delta(z)=V^*(1\ot z)V$,
  $(J_h\ot J_{\varphi})V(J_h\ot J_{\varphi})=V^*$, $(J_h\ot J_{\varphi})W(J_h\ot
  J_{\varphi})=W^*$ and Equations~(\ref{MultiplicativeUnitary}), one obtains, as
  in the proof of \cite[Lemma 3.14]{Tomatsu_AmenableDQG}, the formula:
  \begin{displaymath}
    \omega_{\Lambda_{\varphi}(f)}*
    \omega_{J_h\Lambda_{\varphi}(e^{\alpha}_{i1}),J_h\Lambda_{\varphi}(e^{\alpha}_{j1})}(z)-
    \omega_{J_h\Lambda_{\varphi}(e^{\alpha}_{i1}),J_h\Lambda_{\varphi}(e^{\alpha}_{j1})}*
    \omega_{\Lambda_{\varphi}(f)}(z)=
    \varphi(e^{\alpha}_{11}) \varphi(|X(\alpha)_{ji}|).
  \end{displaymath}
  Since $z$ is a partial isometry, the result follows.
\end{proof}

\begin{claim} \label{claim_inner_2} Let $f\in\mathcal{N}_{\varphi}\cap
  L^\infty(\GG)_+$. Then, for all $\alpha\in\Irr(\GG)$, one has:
  \begin{displaymath}
    \| A(\Lambda_{\varphi}(e^{\alpha}_{ij})\ot\Lambda_{\varphi}(f))-
    \Lambda_{\varphi}(e^{\alpha}_{ij})\ot\Lambda_{\varphi}(f)\| ^2\leq
    (\varphi\ot\varphi)(|X(\alpha)|).
  \end{displaymath}
\end{claim}

\begin{proof}[Proof of Claim~\ref{claim_inner_2}.]
  The proof is the same as \cite[Lemma~3.16]{Tomatsu_AmenableDQG}. Define
  $Y=\sigma\Delta(f)-\Delta(f)$. By Equations (\ref{MultiplicativeUnitary}) the
  left hand side of the inequality of the Claim is
  \begin{displaymath}
    \begin{split}
    \|(\Lambda_{\varphi}\ot\Lambda_\varphi)(\sigma\Delta(f)(e^\alpha_{ij}\ot1))-
    (\Lambda_{\varphi}\ot\Lambda_\varphi)(\Delta(f)(e^\alpha_{ij}\ot1))\|^2
    =\| (\Lambda_{\varphi}\ot\Lambda_\varphi)(Y(e^\alpha_{ij}\ot1))\|^2\\
    =(\varphi\ot\varphi)((e^\alpha_{ji}\ot1)Y^2(e^\alpha_{ij}\ot 1))
    =(\varphi\ot\varphi)(Y^2(e^\alpha_{ii}\ot 1))
    =(\varphi\ot\varphi)(Y(e^\alpha_{ii}\ot 1)Y).
    \end{split}
  \end{displaymath}
  Let $p_\alpha$ be the minimal central projection in $L^\infty(\GG)$
  corresponding to $\alpha$. Since $e^\alpha_{ii}\leq p_\alpha$ and by using the
  Powers-St\o rmer inequality we get
  \begin{align*}
    (\varphi\ot\varphi)(Y(e^\alpha_{ii}\ot 1)Y)&\leq
    (\varphi\ot\varphi)(Y(p_\alpha\ot 1)Y)
    = \|(\Lambda_{\varphi}\ot\Lambda_\varphi)(\sigma\Delta(f)(p_\alpha\ot 1)-
    \Delta(f)(p_\alpha\ot 1))\|^2\\
    &\leq \|\omega_{(\Lambda_{\varphi}\ot\Lambda_\varphi)(\sigma\Delta(f)(p_\alpha\ot 1))}-
    \omega_{(\Lambda_{\varphi}\ot\Lambda_\varphi)(\Delta(f)(p_\alpha\ot 1))}\|
    =(\varphi\ot\varphi)(|X(\alpha)|).
  \end{align*}
  This concludes the proof of Claim~\ref{claim_inner_2}.
\end{proof}

We can now finish the proof of $1\Rightarrow 2$. Let $m\in L^\infty(\GG)^*$ be a
state such that $m(p_0)=0$ and
$m((\id\ot\omega)\Delta(f))=m((\omega\ot\id)\Delta(f))$ for all $\omega\in
L^\infty(\GG)_*$, $f\in L^\infty(\GG)$. By the weak* density of the normal
states $\omega\in L^\infty(\GG)_*$ such that $\omega(p_0)=0$ in the set of
states $\mu\in L^\infty(\GG)^*$ such that $\mu(p_0)=0$, there exists a net of
normal states $(\omega_n)_n$ such that $\omega_n(p_0)=0$ for all $n$ and
$\omega_n*\omega-\omega*\omega_n\rightarrow 0$ weak* for all $\omega\in
L^\infty(\GG)_*$.

By the standard convexity argument, we may and will assume that
$\| \omega_n*\omega-\omega*\omega_n\| \rightarrow 0$. Now, since $L^\infty(\GG)$
is standardly represented on $H$, and by a straightforward cut-off argument, we
can assume that $\omega_n = \omega_{\Lambda_\varphi(f_n)}$ with $f_n \in
L^\infty(\GG)_+ \cap C_c(\GG)$, $p_0 f_n = 0$ and $\|f_n\|_2 = 1$. Applying
Claim~\ref{claim_inner_1} (instead of \cite[Lemma~3.14]{Tomatsu_AmenableDQG}),
\cite[Lemma~3.15]{Tomatsu_AmenableDQG} and Claim~\ref{claim_inner_2} (instead of
\cite[Lemma~3.16]{Tomatsu_AmenableDQG}), we obtain
\begin{displaymath}
  \| A(\Lambda_{\varphi}(e^{\alpha}_{ij})\ot\Lambda_{\varphi}(f_n))-
  \Lambda_{\varphi}(e^{\alpha}_{ij})\ot\Lambda_{\varphi}(f_n)\|   \to 0
\end{displaymath}
for all $\alpha$, $i$, $j$. Putting $\xi_n = \Lambda_\varphi(f_n) \in H^\circ$,
this easily implies that $\| A(\eta\ot\xi_n)- \eta\ot\xi_n\|  \to 0$ for all
$\eta\in H$, hence $\epsilon$ factors through $\ad^\circ$.

\medskip

\noindent$2\Rightarrow 1$. Take a net of unit vectors $\xi_n\in H^\circ$ such
that $\| A(\eta\ot\xi_n) - \eta\ot\xi_n\|  \rightarrow 0$ for all $\eta\in H$, and
put $\widetilde{\xi}_n=J_{\varphi}\xi_n$. Then $\widetilde{\xi}_n$ has norm one,
it is orthogonal to $\xi_0$ for all $n$ and we have for all $\eta\in H$:
\begin{align*}
  \| W^*(\eta\ot\widetilde{\xi}_n)-V(\eta\ot\widetilde{\xi}_n)\|  &=
  \| W(J_h\eta\ot\xi_n)-V^*(J_h\eta\ot\xi_n)\|  \\ &=
  \| A(J_h\eta\ot\xi_n)-J_h\eta\ot\xi_n \|  \rightarrow 0.
\end{align*}
Let $m\in L^\infty(\GG)^*$ be a weak* accumulation point of the net of states
$(\omega_{\widetilde{\xi}_n})_n$. One has:
\begin{align*}
  m((\omega_{\eta}\ot\id)\Delta(f))&= \lim\langle(1\ot f)
  V(\eta\ot\widetilde{\xi}_n), V(\eta\ot\widetilde{\xi}_n)\rangle
  =\lim\langle(1\ot f)W^*(\eta\ot\widetilde{\xi}_n),
  W^*(\eta\ot\widetilde{\xi}_n)\rangle\\
  &= m((\omega_{\eta}\ot\id)\sigma\Delta(f))= m((\id\ot\omega_{\eta})\Delta(f))
  \quad\text{for all}\quad\eta\in H,\,\,f\in L^\infty(\GG).
\end{align*}
Moreover, $m(p_0)=\lim\langle p_0\widetilde\xi_n,\widetilde\xi_n\rangle=0$.

\medskip

Finally, suppose that $\GG$ is a countable unimodular discrete quantum group
such that $\Ll(\GG)$ has property Gamma. To show that the co-unit is weakly
contained in $\ad^\circ$ we follow the proof of Effros
\cite{Effros_InnerAmenability}. Write $\Irr(\GG)=\{u^k \mid k\in\NN\}$ where
$u^k\in B(H_k)\ot C^*(\GG)$. Given $n\in\NN$, choose a unitary $u_n\in \Ll(\GG)$
such that $h(u_n)=0$ and
\begin{displaymath}
  \| u_n\lambda(u^{k}_{ij})-\lambda(u^{k}_{ij})u_n\| _2< 
  \frac{1}{n\max\{\dim(u^l)^2 \mid l\leq n\}}
\end{displaymath}
for all $k=1, \ldots, n$ and $1\leq i,j \leq \dim u^k$.  From this inequality,
it is easy to check that, for all $k\leq n$ and all $\eta\in H_k$, one has
\begin{displaymath}
  \| (\id\ot\lambda)(u^k)^*(1\ot u_n^*)(\eta\ot\xi_0)-
  (1\ot u_n^*)(\id\ot\lambda)(u^k)^*(\eta\ot\xi_0)\| <\frac{1}{n}\| \eta\| .
\end{displaymath}
Recall that $\xi_0$ is a fixed vector for $\ad$. Hence, for all $\eta\in H_k$,
one has $(\id\ot\ad)(u^k)(\eta\ot\xi_0)=\eta\ot\xi_0$. Moreover, since the
representations $\lambda$ and $\rho$ commute we find, with $\xi_n=u_n^*\xi_0$,
$k\leq n$ and $\eta\in H_k$,
\begin{align*}
  &\| (\id\ot\ad)(u^k)(\eta\ot\xi_n)-(\eta\ot\xi_n)\|  = \\
  & \makebox[2cm]{}= \| (\id\ot\ad)(u^k)(1\ot
  u_n^*)(\id\ot\ad)(u^k)^*(\eta\ot\xi_0)-
  (\eta\ot\xi_n)\| \\
  & \makebox[2cm]{}= \| (\id\ot\lambda)(u^k)(1\ot
  u_n^*)(\id\ot\lambda)(u^k)^*(\eta\ot\xi_0)-
  (1\ot u_n^*)(\eta\ot\xi_0)\| \\
  & \makebox[2cm]{}<\frac{1}{n}\| \eta\| .
\end{align*}
Since $\xi_n\in H^\circ=\xi_0^\bot$, it follows that the co-unit is weakly
contained in $\ad^\circ$.
\end{proof}

\begin{corollary}[cf. \cite{VergniouxVaes_Boundary}]
  For $n\geq 3$ the discrete quantum group $\FO_n$ is not inner amenable, and in
  particular the von Neumann algebra $\Ll(\FO_n)$ is a full factor.
\end{corollary}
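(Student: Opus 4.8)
The plan is to deduce the corollary by combining the two main theorems of this section with the (classical, external) fact that $\FO_n$ is non-amenable for $n\geq 3$. First I would record that for $Q=I_n$ the quantum group $\FO_n=\FO(I_n)$ is unimodular (of Kac type), so that both Theorem~\ref{thm_ad_reg} and Theorem~\ref{thm_inner_amen} apply. By Theorem~\ref{thm_ad_reg} the representation $\ad^\circ$ factors through $\lambda$; unwinding this, there is a $*$-representation of $C^*_\red(\FO_n)$ composing with $\lambda$ to give $\ad^\circ$, whence $\|\ad^\circ(x)\|\leq\|\lambda(x)\|$ for all $x\in C^*(\FO_n)$. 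In other words $\ad^\circ$ is weakly contained in the regular representation $\lambda$.

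Next I would argue by contradiction. Suppose $\FO_n$ is inner amenable. By the equivalence $(1)\Leftrightarrow(2)$ of Theorem~\ref{thm_inner_amen}, the trivial representation $\epsilon$ is then weakly contained in $\ad^\circ$. Since weak containment is transitive, combining this with the previous paragraph gives that $\epsilon$ is weakly contained in $\lambda$, which is exactly the statement that $\FO_n$ is amenable (co-amenability of $A_o(n)$, equivalently injectivity of the canonical surjection $A_o(n)\to C^*_\red(\FO_n)$). But $\FO_n$ is non-amenable for $n\geq 3$: the root $\rho$ of $X^2-nX+1$ satisfies $\rho>1$ precisely for $n\geq 3$, so the dimensions $\dim v^k=[k+1]_\rho$ grow exponentially, which rules out amenability. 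This contradiction shows that $\FO_n$ is not inner amenable.

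For the factor statement I would invoke that $\Ll(\FO_n)$ is a $II_1$ factor for $n\geq 3$, which is known (see \cite{VergniouxVaes_Boundary}). Since $\FO_n$ is countable, the contrapositive of the ``moreover'' part of Theorem~\ref{thm_inner_amen} applies: if $\Ll(\FO_n)$ had property Gamma then $\FO_n$ would be inner amenable, which we have just excluded. Hence $\Ll(\FO_n)$ does not have property Gamma, and by Connes' characterization a separable $II_1$ factor without property Gamma is full. This gives the desired conclusion.

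The step requiring the most care is bookkeeping rather than analysis: one must make sure the non-amenability of $\FO_n$ is used in the precise form ``$\epsilon$ is not weakly contained in $\lambda$'', and that the phrase ``factors through $\lambda$'' from Theorem~\ref{thm_ad_reg} is correctly upgraded to the weak containment $\ad^\circ\prec\lambda$ needed for the transitivity argument. There is no genuine obstacle in the corollary itself; all the analytic difficulty has been front-loaded into the coefficient estimates underlying Theorem~\ref{thm_ad_reg} and into Theorem~\ref{thm_inner_amen}, so that the corollary reduces to transitivity of weak containment together with the standard equivalence between fullness and absence of property Gamma.
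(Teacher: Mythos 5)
Your proposal follows essentially the same route as the paper's proof: the paper likewise combines the weak containment $\ad^\circ\prec\lambda$ from Theorem~\ref{thm_ad_reg} with the equivalence of inner amenability and $\epsilon\prec\ad^\circ$ from Theorem~\ref{thm_inner_amen}, quotes non-amenability of $\FO_n$ for $n\geq 3$ as known to get $\epsilon\not\prec\lambda$, and deduces both non-inner-amenability and, via the ``moreover'' clause of Theorem~\ref{thm_inner_amen} (property Gamma implies inner amenability) together with factoriality, fullness of $\Ll(\FO_n)$; your contradiction formulation is just the contrapositive of this.

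One genuine flaw, though a local one: your parenthetical justification of non-amenability is incorrect. Exponential growth does not rule out amenability --- classically the lamplighter group $(\ZZ/2)\wr\ZZ$ has exponential growth and is amenable, and for discrete quantum groups only the implication ``subexponential growth implies amenability'' is available, not its converse. The standard argument for non-amenability of $\FO_n$, $n\geq 3$, is Banica's: the fundamental character $\chi=\sum_i v_{ii}$ is a semicircular element in $C^*_\red(\FO_n)$, so $\|\lambda(\chi)\|=2<n=\epsilon(\chi)$, which violates the Kesten-type criterion for co-amenability (equivalently, for $\epsilon$ to factor through $C^*_\red(\FO_n)$). Since both you and the paper treat non-amenability as an external known input, this does not damage the architecture of your proof, but the growth argument as you stated it would fail if pressed; replace it with the character-norm argument or a citation.
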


\begin{proof}
  For $n\geq 3$ it is known that $\FO_n$ is not amenable, hence $\lambda$ does
  not weakly contain $\epsilon$. On the other hand by Theorem~\ref{thm_ad_reg}
  the representation $\ad^\circ$ is weakly contained in $\lambda$. Consequently
  $\epsilon$ is not weakly contained in $\ad^\circ$, hence $\FO_n$ is not inner
  amenable and $\Ll(\FO_n)$ is full by Theorem~\ref{thm_inner_amen}.
\end{proof}

\section{Property (HH) for $\Ll(\FO_n)$}
\label{sec_prop_HH}

\subsection{A Deformation}\label{sec_deformation}

Recall that $\Delta : A_o(n)\to A_o(n)\ot A_o(n)$ factors to $\Delta' :
C^*_\red(\FO_n) \to C^*_\red(\FO_n) \ot A_o(n)$ by Fell's absorption
principle. On the other hand, for any element $g \in O_n$ we have a character
$\omega_g : A_o(n) \to \CC$ defined by putting $\omega_g(v_{ij}) = g_{ij}$ and
using the universal property of $A_o(n)$. It is easy to check that
$(\omega_g\ot\omega_h)\Delta = \omega_{gh}$ and that $\omega_e = \epsilon$,
where $e$ is the unit of $O_n$ and $\epsilon$ is the co-unit of $A_o(n)$.

Combining these objects we get $*$-homomorphisms $\alpha_g =
(\id\ot\omega_g)\circ\Delta' : C^*_\red(\FO_n) \to C^*_\red(\FO_n)$ such that
$\alpha_g \circ \alpha_k = \alpha_{gk}$, $\alpha_e = \id$ and $h\circ\alpha_g =
h$. In particular each $\alpha_g$ is an automorphism of $C^*_\red(\FO_n)$ and we
have got an action of $O_n$ on $C^*_\red(\FO_n)$ by trace preserving
automorphisms. Note that there is also an action $\alpha'$ of
$O_n^{\mathrm{op}}$ on $C^*_\red(\FO_n)$ given by $\alpha'_g =
(\omega_g\ot\id)\circ\Delta''$, where $\Delta'' : C^*_\red(\FO_n) \to A_o(n)\ot
C^*_\red(\FO_n)$ is the homomorphism analogous to $\Delta'$.

Let $n\geq 3$. Denote $M=\Ll(\FO_n)$ the von Neumann algebra of $\FO_n$ and
$\tilde M=M\vnot M$. We identify $M$ inside $\tilde M$ via the unital normal
faithful trace preserving $*$-homomorphism $\iota := \Delta$. Denote by $E$ the
trace-preserving conditional expectation from $\tilde M$ to $\iota(M)$. We let
$O_n$ act on $\tilde M$ by putting $A_g = (\alpha_g\ot\id) : \tilde M\to\tilde
M$ for all $g \in O_n$.

\begin{proposition}[cf. \cite{Brannan_aT}] \label{prp_proj_def} Denote $(U_k)_k$
  the dilated Chebyshev polynomials of the second kind and consider, for each $s
  \in \RR$, the densely defined map $T_s : C^*_\red(\FO_n) \to C^*_\red(\FO_n)$
  with domain $\CC[\FO_n]$ such that $T_s = \frac{U_k(s)}{U_k(n)} \id$ on
  $\CC[\FO_n]_k$ for all $k\in\NN$.

  Then for each $g \in O_n$ we have $E \circ A_g \circ \iota = T_s$ where $s =
  \Tr(g)$. In particular, for such $s$ the map $T_s$ extends to a trace
  preserving completely positive map on $C^*_\red(\FO_n)$.
\end{proposition}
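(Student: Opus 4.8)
The plan is to verify the identity $E\circ A_g\circ\iota = T_s$ on the dense domain $\CC[\FO_n]$, working block by block on the coefficient spaces $\CC[\FO_n]_k$. The final ``in particular'' assertion then comes for free: $\iota=\Delta$ is a trace-preserving unital $*$-homomorphism, $A_g=\alpha_g\ot\id$ is a trace-preserving $*$-automorphism, and $E$ is a trace-preserving conditional expectation, so their composite is automatically a trace-preserving unital completely positive map on $C^*_\red(\FO_n)$ which agrees with $T_s$ on $\CC[\FO_n]$ and hence extends it. Everything thus reduces to computing the scalar by which $E\circ A_g\circ\iota$ acts on each $\CC[\FO_n]_k$.

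First I would unravel the two outer maps on a coefficient $v^k_{ij}$. Using $\iota(v^k_{ij})=\Delta(v^k_{ij})=\sum_p v^k_{ip}\ot v^k_{pj}$ together with $\alpha_g=(\id\ot\omega_g)\Delta'$, and writing $(\rho_k(g))_{qp}=\omega_g(v^k_{qp})$ for the matrix coefficients of the \emph{classical} orthogonal representation $\rho_k$ on $H_k$ attached to the character $\omega_g$, one obtains $A_g(\Delta(v^k_{ij}))=\sum_{p,q}(\rho_k(g))_{qp}\,v^k_{iq}\ot v^k_{pj}$. Since $(\omega_g\ot\omega_h)\Delta=\omega_{gh}$, the $\rho_k$ are genuine representations of $O_n$; in particular $\rho_1(g)=g$, so $\Tr\rho_1(g)=\Tr(g)=s$, and the fusion rules $v^1\ot v^k\simeq v^{k-1}\oplus v^{k+1}$ descend to $\rho_1\ot\rho_k\simeq\rho_{k-1}\oplus\rho_{k+1}$.

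The heart of the argument, and the step I expect to be the main obstacle, is the explicit computation of the conditional expectation $E$ onto $\iota(M)=\Delta(M)$. Because $\Delta$ is trace preserving, $E$ is the restriction to $\tilde M$ of the $L^2$-orthogonal projection onto $\overline{\Delta(\CC[\FO_n])}$. By the Schur orthogonality relations~\eqref{eq_schur_orth} in the Kac case (where $F_{v^k}=\id$ and $\qdim=\dim$), coefficients of inequivalent irreducibles are orthogonal, so $E$ sends $\CC[\FO_n]_k\ot\CC[\FO_n]_k$ into $\Delta(\CC[\FO_n]_k)$; a short computation with~\eqref{eq_schur_orth} then shows that $\{\sqrt{\dim H_k}\,\Delta(v^k_{ab})\}_{a,b}$ is an orthonormal basis of $\Delta(\CC[\FO_n]_k)$ and that $E(v^k_{iq}\ot v^k_{pj})=\frac{\delta_{pq}}{\dim H_k}\Delta(v^k_{ij})$. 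Feeding this into the formula from the previous step collapses the double sum to $\Tr\rho_k(g)$, yielding $E\circ A_g\circ\iota(v^k_{ij})=\frac{\Tr\rho_k(g)}{\dim H_k}\Delta(v^k_{ij})$, i.e. multiplication by $\Tr\rho_k(g)/\dim H_k$ under the identification $\iota(M)\simeq M$.

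It remains to match this scalar with $U_k(s)/U_k(n)$. Since $\rho+\rho^{-1}=n$ and $\dim H_k=[k+1]_\rho=U_k(\rho+\rho^{-1})$, the denominator is $\dim H_k=U_k(n)$. For the numerator, the characters $\chi_k(g):=\Tr\rho_k(g)$ satisfy $\chi_0=1$, $\chi_1(g)=s$ and, by the fusion rules above, $\chi_1\chi_k=\chi_{k-1}+\chi_{k+1}$, which is precisely the defining recursion $X U_k=U_{k-1}+U_{k+1}$ of the dilated Chebyshev polynomials; hence $\chi_k(g)=U_k(s)$. Therefore $E\circ A_g\circ\iota$ acts as $\frac{U_k(s)}{U_k(n)}\id$ on every $\CC[\FO_n]_k$, i.e. equals $T_s$ with $s=\Tr(g)$, as claimed.
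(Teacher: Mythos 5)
Your proof is correct and follows essentially the same route as the paper: both compute $E$ on elementary tensors of coefficients via the Schur orthogonality relations (your formula $E(v^k_{iq}\ot v^k_{pj})=\delta_{pq}\,\Delta(v^k_{ij})/\dim H_k$ is exactly the paper's $E(v_{ij}^r\ot v_{kl}^s)=\delta_{rs}\delta_{jk}v_{il}^r/U_r(n)$), and both identify $\Tr\rho_k(g)=U_k(\Tr g)$ through the Chebyshev recursion induced by the fusion rules, with the trace-preserving CP extension coming for free from the factorization $E\circ A_g\circ\iota$. The only differences are cosmetic elaborations (the explicit orthonormal basis of $\Delta(\CC[\FO_n]_k)$ and the identity $\dim H_k=[k+1]_\rho=U_k(n)$, which the paper uses without comment).
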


\begin{proof}
  For $r\in\NN$, denote $v_{ij}^r$ the coefficients of the $r^{\text{th}}$
  irreducible corepresentation $v^r$ of $\FO_n$, with respect to a given ONB of
  the corresponding space. Using the orthogonality
  relations~\eqref{eq_schur_orth}, it is easy to check that $E(v_{ij}^r\ot
  v_{kl}^s) = \delta_{rs}\delta_{jk} v_{il}^r / U_r(n)$, where $U_r(n) = \dim
  v^r$.
  
  On the other hand, denoting $u^r$ the image of $v^r$ as a representation of
  $O_n$, we have by definition $\omega_g(v^r_{ij}) = u^r_{ij}(g)$. The character
  $\chi_r = \Tr \circ u^r = \sum_k u^r_{kk}$ of $u^r$ is given by $\chi_r(g) =
  U_r(\Tr g)$: indeed by the fusion rule $u^1 \ot u^r \simeq u^{r-1} \oplus
  u^{r+1}$ these characters satisfy the recursion relation of the Chebyshev
  polynomials $U_r$, and we have $\chi_1(g) = \Tr(g)$.  Now it suffices to
  compute:
  \begin{align*}
    E\circ A_g\circ \Delta(v_{ij}^r) = \sum_{k,l} E(v_{ik}^ru_{kl}^r(g)\ot
    v_{lj}^r) = \sum_k v_{ij}^r \frac{u_{kk}^r(g)}{U_r(n)} = v_{ij}^r
    \frac{U_r(\Tr g)}{U_r(n)},
  \end{align*}
  and we recognize the definition of $T_s$ for $s = \Tr(g)$.
\end{proof}

\begin{example} \label{example_def} Define $g_t=\id_{n-2}\oplus R_t\in O_n$
  where
  \begin{displaymath}
    R_t=\left( \begin{array}{cc}
        \cos(t) & -\sin(t) \\ \sin(t) & \cos(t)
      \end{array}\right).
  \end{displaymath}
  Denoting $A_t = A_{g_t}$, we get in this way a $1$-parameter group of
  automorphisms of $\tilde M$ such that $E\circ A_t\circ \iota = T_s$ with
  $s=n-2+2\cos t \in [n-4,n]$.

  Fix $0<t_0<\frac{\pi}{3}$. For all $0<t<t_0$ one has
  $2<1+2\cos(t_0)<\Tr(g_t)<n$. By \cite[Proposition 4.4.1]{Brannan_aT}, there
  exists a constant $C>0$ such that $$\frac{U_k(\Tr(g_t))}{U_k(n)}\leq
  C\left(\frac{\Tr(g_t)}{n}\right)^k$$ for all $0<t<t_0$ and all
  $k\in\NN$. Hence, for $0<t<t_0$, the map $T_s = E\circ A_t\circ\iota$ is
  $L^2$-compact, and this is how Brannan obtains the Haagerup approximation
  property in \cite{Brannan_aT}.

  Finally, consider $S = { 0~1 \choose 1~0}$, $k = \id_{n-2}\oplus S \in O_n$
  and $B = \alpha_k\ot\alpha'_k \in \mathrm{Aut}(\tilde M)$. We clearly have
  $B^2 = \id$, $B A_t = A_{-t}B$, and one can check on the generators $v_{ij}$
  that $B\circ\Delta = \Delta$ so that $B$ restricts to the identity on
  $\iota(M)$. In other words, our deformation can by reversed by an inner
  involution of $\mathrm{Aut}(\tilde M)$ respecting the bimodule structure.
\end{example}

\begin{remark}
  The constructions and results of this Section~\ref{sec_prop_HH} remain valid
  for the other unimodular orthogonal free quantum groups $\FO(Q)$. Up to
  isomorphism, the only missing cases are $\FO(Q_{2n})$ with $Q_{2n} =
  \mathrm{diag}(Q_2, \ldots, Q_2)$ and $Q_2 = {\makebox[1.35ex]{}0~1\, \choose
    -1~0\,}$. But these discrete quantum groups all admit the dual of $SU(2)$ as
  a commutative quotient, and one can build a deformation using the same
  matrices $R_t$ as in Example~\ref{example_def}.

  On the other hand, in the non unimodular case there is no trace preserving
  conditional expectation $E : M\vnot M \to \Delta(M)$, so that the arguments of
  Proposition~\ref{prp_proj_def} do not apply anymore. Recall however that
  Haagerup's Property still holds in that case, as shown in
  \cite{DeCommerFreslonYamashita}.
\end{remark}

Let us give some applications of the preceding construction. It is known that
Haagerup's property is equivalent to the existence of a ``proper conditionally
negative type function'' and/or of a proper cocycle in some
representation. Denoting $\tau_s$ the linear form on $\CC[\FO_n]$ given by
$\tau_s(v_{ij}^r) = \delta_{ij} U_r(s) / U_r(n)$, it follows from the
proposition above and Lemma~\ref{lem_mult} that $\tau_s$ extends to a state of
$C^*(\FO_n)$ for $s\leq n$. Differentiating at $s=n$ we obtain a conditionally
negative form $\psi : \CC[\FO_n] \to \CC$ given by $\psi(v_{ij}^r) = \delta_{ij}
U_r'(n) / U_r(n)$ --- this was also observed in
\cite[Corollary~10.3]{CiprianiFranzKula_LevyProcesses}, where all
$\ad$-invariant conditionally negative forms on $\CC[\FO_n]$ are classified. We
have indeed, for all $x\in \Ker\epsilon \subset \CC[\FO_n]$:
\begin{displaymath}
  \psi(x^*x) = \lim_{s\to n} \frac{\tau_s(x^*x)-\epsilon(x^*x)}{s-n} = \lim_{s\to n} \frac{\tau_s(x^*x)}{s-n} \leq 0.
\end{displaymath}

Moreover the following lemma (cf. also \cite{CiprianiFranzKula_LevyProcesses})
shows that $\psi$ is indeed proper, and more precisely that the associated
function $\Psi = (\id\ot\psi)(V)$ behaves like the ``naive'' length function $L
= \sum r p_r$ at infinity:

\begin{lemma}\label{lem_asympt}
  We have $\frac{U_r'(n)}{U_r(n)} = \frac r{\sqrt{n^2-4}} + O(1)$ as $r \to
  \infty$.
\end{lemma}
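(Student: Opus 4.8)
The plan is to reduce everything to the classical closed form of the dilated Chebyshev polynomials under the substitution $X = y + y^{-1}$. From the recursion $X U_k = U_{k-1} + U_{k+1}$ together with $U_0 = 1$, $U_1 = X$ one checks by induction that
\[
  U_r(y + y^{-1}) = \frac{y^{r+1} - y^{-r-1}}{y - y^{-1}}
  \qquad (y \neq \pm 1).
\]
Recall that $\rho \geq 1$ denotes the greatest root of $X^2 - nX + 1$, so that $\rho + \rho^{-1} = n$ and, since $\rho \geq 1$, $\rho - \rho^{-1} = \sqrt{(\rho + \rho^{-1})^2 - 4} = \sqrt{n^2 - 4}$. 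Evaluating the closed form at $y = \rho$ recovers $U_r(n) = [r+1]_\rho = \dim v^r$, which is the quantity we must divide by.

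Next I would differentiate through the substitution. Writing $X = y + y^{-1}$, so that $dX/dy = 1 - y^{-2}$, the chain rule gives
\[
  U_r'(X) = \frac{d}{dy}\!\left[\frac{y^{r+1} - y^{-r-1}}{y - y^{-1}}\right]
  \cdot \frac{y^2}{y^2 - 1}.
\]
The key step is then to pass to the logarithmic derivative, which separates the two competing factors cleanly. Setting $h(y) = y^{r+1} - y^{-r-1}$ and $g(y) = y - y^{-1}$ and evaluating at $y = \rho$, one obtains
\[
  \frac{U_r'(n)}{U_r(n)}
  = \left(\frac{h'(\rho)}{h(\rho)} - \frac{g'(\rho)}{g(\rho)}\right)
  \frac{\rho^2}{\rho^2 - 1}.
\]

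It remains to estimate the two logarithmic derivatives. The term $g'(\rho)/g(\rho)$ is a constant independent of $r$, hence $O(1)$. For the other term I would write
\[
  \frac{h'(\rho)}{h(\rho)}
  = (r+1)\,\frac{\rho^r + \rho^{-r-2}}{\rho^{r+1} - \rho^{-r-1}}
  = \frac{r+1}{\rho}\cdot\frac{1 + \rho^{-2r-2}}{1 - \rho^{-2r-2}}.
\]
Since $n \geq 3$ forces $\rho > 1$, the last quotient equals $1 + O(\rho^{-2r})$, so that $h'(\rho)/h(\rho) = r/\rho + O(1)$. Multiplying by the Jacobian factor and simplifying by means of $\rho/(\rho^2 - 1) = 1/(\rho - \rho^{-1}) = 1/\sqrt{n^2 - 4}$ yields
\[
  \frac{U_r'(n)}{U_r(n)}
  = \frac{r}{\rho}\cdot\frac{\rho^2}{\rho^2 - 1} + O(1)
  = \frac{r}{\sqrt{n^2 - 4}} + O(1),
\]
which is the claim. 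I do not expect any genuine obstacle here: the substitution is valid near $X = n$ precisely because $y = \rho > 1$ there, and the only point requiring a little care is checking that the two error contributions --- the constant $g'/g$ and the exponentially small correction in $h'/h$ --- indeed combine into a bounded remainder after multiplication by the $r$-independent Jacobian.
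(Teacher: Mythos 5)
Your proof is correct and follows essentially the same route as the paper's: both rest on the closed form $U_r(y+y^{-1})=(y^{r+1}-y^{-r-1})/(y-y^{-1})$ and differentiate through the substitution, the paper working at the small root $q=\rho^{-1}=\tfrac12(n-\sqrt{n^2-4})$ with the Jacobian $\mathrm{d}q/\mathrm{d}n=-q/\sqrt{n^2-4}$, while you work at $\rho$ and organize the computation via logarithmic derivatives. The difference is purely cosmetic (your $h'/h - g'/g$ bookkeeping is arguably a cleaner way to isolate the $r/\sqrt{n^2-4}$ term than the paper's explicit quotient), and your error analysis correctly uses $\rho>1$ for $n\geq 3$.
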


\begin{proof}
  We have the well-known formula $U_r(n) = (q^{r+1}-q^{-r-1})/(q-q^{-1})$, where
  $q = \frac 12(n-\sqrt{n^2-4})$.  Differentiating first with respect to $q$, we
  get
  \begin{displaymath}
    \frac{U'_r(n)}{U_r(n)} = \frac{r(1-q^{-2})-2q^{-2}+2q^{-2r-2}+rq^{-2r-2}(1-q^{-2})}
    {(q-q^{-1})(1-q^{-2r-2})} \times \frac{\mathrm{d}q}{\mathrm{d}n}.
  \end{displaymath}
  Noticing that $\mathrm{d}q/\mathrm{d}n = -q / \sqrt{n^2-4}$, we obtain finally
  \begin{displaymath}
    \frac{U'_r(n)}{U_r(n)} = \frac 1{\sqrt{n^2-4}}
    \left(r +  \frac 2{1-q^{-2}} + o(1)\right).
  \end{displaymath}\par
\end{proof}

\subsection{Property (HH)}

It is then natural to ask also for the explicit construction of a proper cocycle
establishing Haagerup's property. Notice that by \cite{Vergnioux_Path} such a
cocycle cannot live in the regular representation or a finite multiple of
it. Now an explicit cocycle can easily be obtained by differentiating the
$1$-parameter group of automorphisms above. More precisely, for any $X \in
\mathfrak{o}_n$, $X\neq 0$, we define $\delta_X = d_XA_g\circ \iota : \CC[\FO_n]
\to \tilde M$. Denoting $u^r \in B(H_r)\ot C(O_n)$ the image of $v^r$, we have
explicitly
\begin{displaymath}
  \delta_X(v_{ij}^r) = \sum_{k,l} v_{ik}^r d_Xu^r_{kl}\ot v_{lj}^r.
\end{displaymath}
Here $d_X u^r_{kl}$ is the differential of $u^r_{kl} : O_n \to \CC$, evaluated
on the tangent vector $X$ at $e$. Since $A_g \circ \iota$ is a $*$-homomorphism,
we get indeed derivations with respect to the $\CC[\FO_n]$-bimodule structure
coming from the embedding $\iota = \Delta$, and in fact we get a Lie algebra map
$(X\mapsto \delta_X)$ from $\mathfrak{o}_n$ to the space of derivations
$\mathrm{Der}(\CC[\FO_n],\tilde M)$.

We moreover denote $f_X = (\id\ot \delta_X)(V)$, which is an unbounded
multiplier of the Hilbert $C_0(\FO_n)$-module $C_0(\FO_n)\ot \tilde M$. Then
$f_X^* f_X$ is an unbounded multiplier of $C_0(\FO_n)$, the ``conditionally
negative'' type function associated to $\delta_X$. The following Lemma shows in
particular that $f_X^* f_X$ corresponds to the conditionally negative form
$\psi$ obtained by differentiating the family of states $\tau_s$.

\begin{lemma}\label{lem_deriv_proper}
  The derivation $\delta_X$ takes its values in the orthogonal complement
  $\tilde M^\circ$ of $\iota(M)$ with respect to $h\ot h$. Moreover we have
  \begin{displaymath}
    f_X^* f_X = \Tr(X^*X) \times (\id\ot \psi)(V) . 
  \end{displaymath}
  In particular for any $X\in \mathfrak{o}_n$, $X\neq 0$, the derivation
  $\delta_X$ is proper in the sense that $p_r f_X^* f_X \geq c_r p_r$ for any
  $r$, with $c_r \to \infty$.
\end{lemma}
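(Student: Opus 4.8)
The plan is to treat the three assertions in turn, the engine for the second (and hence the third) being to differentiate \emph{twice} the character identity $\chi_r = U_r\circ\Tr$ already exploited in Proposition~\ref{prp_proj_def}.

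First I would show that $\delta_X$ lands in $\tilde M^\circ$. Since $E$ is the trace-preserving conditional expectation onto $\iota(M)$, the orthogonal complement $\tilde M^\circ$ of $\iota(M)$ with respect to $h\ot h$ is exactly $\Ker E$, so it suffices to prove $E\circ\delta_X = 0$. By definition $\delta_X = d_X(A_g\circ\iota)$ and $E$ is a fixed bounded map, so $E\circ\delta_X = d_X(E\circ A_g\circ\iota) = d_X(T_{\Tr g})$ by Proposition~\ref{prp_proj_def}. Writing $T_{\Tr g} = \frac{U_k(\Tr g)}{U_k(n)}\id$ on each $\CC[\FO_n]_k$ and differentiating at $g=e$ in the direction $X$, the chain rule produces a factor $d_X\Tr = \Tr(X)$, which vanishes because $X\in\mathfrak{o}_n$ is antisymmetric. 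Hence $E\circ\delta_X=0$.

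Next I would compute $f_X^* f_X$ block by block. On the block $B(H_r)$ the element $f_X$ is $\sum_{ij} e_{ij}^r\ot\delta_X(v_{ij}^r)$, and the $C_0(\FO_n)$-valued inner product is obtained by multiplying $f_X^*$ and $f_X$ in $M(C_0(\FO_n)\ot\tilde M)$ and applying $h\ot h$ to the $\tilde M$-leg. Expanding $\delta_X(v_{ij}^r)=\sum_{kl}(d_X u_{kl}^r)\,v_{ik}^r\ot v_{lj}^r$ and using the Schur orthogonality relations~\eqref{eq_schur_orth} in the Kac case ($F=\id$, $\qdim v^r = \dim H_r = U_r(n)$), every index contraction is forced by a Kronecker delta; the sum over $i$ then contributes a factor $U_r(n)$, and I expect to obtain
\begin{displaymath}
  p_r\,f_X^* f_X = \frac{p_r}{U_r(n)}\sum_{k,l}|d_X u_{kl}^r|^2 .
\end{displaymath}
Recognising $d_X u_{kl}^r$ as the matrix entries of the derived representation $\pi_r = d u^r$, the remaining sum is the Hilbert--Schmidt norm $\Tr(\pi_r(X)^*\pi_r(X))$.

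The heart of the argument is then the identity $\Tr(\pi_r(X)^*\pi_r(X)) = U_r'(n)\,\Tr(X^*X)$. I would prove it by setting $g=\exp(tX)$ and comparing both sides of $\Tr(u^r(\exp tX)) = U_r(\Tr\exp tX)$ to second order in $t$: the left side has second derivative $\Tr(\pi_r(X)^2)$ at $t=0$, while on the right $s(t)=\Tr\exp(tX)$ satisfies $s(0)=n$, $s'(0)=\Tr(X)=0$, $s''(0)=\Tr(X^2)$, so the second derivative is $U_r'(n)\Tr(X^2)$. Using $\pi_r(X)^*=-\pi_r(X)$ and $X^*=-X$ this gives exactly $\Tr(\pi_r(X)^*\pi_r(X)) = U_r'(n)\Tr(X^*X)$. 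Substituting back and comparing with $(\id\ot\psi)(V)|_r = \frac{U_r'(n)}{U_r(n)}p_r$ yields $f_X^* f_X = \Tr(X^*X)\,(\id\ot\psi)(V)$. Properness is then immediate: $p_r f_X^* f_X = \Tr(X^*X)\frac{U_r'(n)}{U_r(n)}p_r =: c_r p_r$, and for $X\neq 0$ the coefficient $\Tr(X^*X)>0$ while $U_r'(n)/U_r(n)\to\infty$ by Lemma~\ref{lem_asympt}, so $c_r\to\infty$. The main obstacle is this middle computation: keeping the Schur-orthogonality bookkeeping straight so that the $i$-sum and the two factors of $U_r(n)$ combine correctly, and spotting that the clean value of $\Tr(\pi_r(X)^*\pi_r(X))$ comes from the second-order term of the character relation, rather than from any representation-theoretic input about $\mathfrak{o}_n$.
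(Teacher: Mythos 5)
Your proof is correct, and for the central identity it takes a genuinely different route from the paper. The first part is the same in both: differentiate $E\circ A_g\circ\iota=T_{\Tr g}$ once and use $\Tr(X)=0$. For the identity $f_X^*f_X=\Tr(X^*X)(\id\ot\psi)(V)$, the paper never opens up the coefficients: it differentiates $E\circ A_t\circ\iota=T_{s(t)}$ a second time along a one-parameter subgroup, uses that $(A_t)$ acts as a one-parameter unitary group on $L^2(\tilde M)$ so that $A''_0=-A^{\prime*}_0A'_0$, and that $E=\iota^*$ hilbertianly, to get the operator identity $\delta_X^*\delta_X=\Tr(X^*X)\,T'_n$ in one stroke; it then converts this into the multiplier statement via $(\id\ot\Delta)(V)=V_{12}V_{13}$. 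You instead compute $f_X^*f_X$ block by block with Schur orthogonality --- your bookkeeping checks out: the two orthogonality factors contribute $1/d_r^2$ and the $i$-sum a factor $d_r$, giving $p_rf_X^*f_X=d_r^{-1}\bigl(\sum_{k,l}|d_Xu^r_{kl}|^2\bigr)p_r$ --- and then prove $\Tr(\pi_r(X)^*\pi_r(X))=U'_r(n)\Tr(X^*X)$ by taking the second derivative of the classical character identity $\Tr u^r(\exp tX)=U_r(\Tr\exp tX)$, with skew-adjointness of $\pi_r(X)$ correctly available since $t\mapsto u^r(\exp tX)$ is a one-parameter group of unitaries. In effect you differentiate twice the classical shadow ($\chi_r=U_r\circ\Tr$) of what the paper differentiates twice at the quantum level ($E\circ A_t\circ\iota=T_{s(t)}$). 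What each buys: the paper's argument is basis-free, requires no orthogonality computation, and delivers $\delta_X^*\delta_X=\Tr(X^*X)T'_n$ as an abstract identity that is reused later (the closing remark on strong solidity notes that $\Delta_X=\delta_X^*\delta_X$ is a central multiplier by this very proof); your argument is more elementary and self-contained, reducing everything to finite-dimensional representation theory of $O_n$ and making the constant $c_r=\Tr(X^*X)\,U'_r(n)/U_r(n)$ visible on each block without invoking the $L^2$-adjoint formalism, at the cost of index bookkeeping that the paper deliberately avoids.
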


\begin{proof}
  The beginning of the proof is quite general and probably well-known to
  experts.
  
  We choose a $1$-parameter subgroup $(g_t)_t$ of $O_n$ such that $g'_0 = X$, we
  put $A_t = A_{g_t}$, $s(t) = \Tr(g_t)$ and we differentiate the identity
  $E\circ A_t \circ\iota = T_{s(t)}$ between linear maps on $\CC[\FO_n]$ from
  Proposition~\ref{prp_proj_def}. We obtain $\Tr(g'_t) \times T'_{s(t)} = E\circ
  A'_t\circ\iota$, where $T'_s$ is the derivative with respect to $s$, and the
  other derivatives are relative to $t$. In particular for $t=0$ this yields
  $E\circ\delta_X = E\circ d_X A_g \circ\iota = \Tr(X) \times T'_n$. Since $\Tr
  X = 0$ for $X \in \mathfrak{o}_n$ we obtain $E\circ\delta_X = 0$, hence
  $\delta_X(\CC[\FO_n]) \subset \tilde M^\circ$.

  Now we differentiate once more at $t=0$, obtaining $\Tr(g''_0) \times T'_n = E
  \circ A''_0 \circ \iota$ as linear maps on $\CC[\FO_n]$. Since $(g_t)_t$ is a
  $1$-parameter group and $X^*+X=0$, we have $g''_0 = g^{\prime 2}_0 = -X^*X$ in
  $M_n(\RR)$. Similarly, since $(A_t)_t$ is a $1$-parameter group of
  trace-preserving automorphisms we have $A''_0 = A^{\prime 2}_0 = -A^{\prime
    *}_0 A'_0$, where $A^{\prime *}_0$ denotes the adjoint of $A'_0$ with
  respect to the hilbertian structure of $\tilde M$. Moreover we have $E =
  \iota^*$ at the hilbertian level, so that $\delta_X^*\delta_X = (A'_0\iota)^*
  (A'_0\iota) = -E\circ A''_0\circ\iota = \Tr(X^*X)\times T'_n$.

  The last identity can also be written $(\delta_X(a)|\delta_X(b)) = \Tr(X^*X)
  h(a^*T'_n(b))$ for all $a$, $b \in \CC[\FO_n]$. As a result we obtain
  \begin{displaymath}
    f_X^* f_X = (\id\ot\delta_X)(V)^* (\id\ot\delta_X)(V)
    = \Tr(X^*X)\times (\id\ot h)(V^*(\id\ot T'_n)(V))
  \end{displaymath}
  as unbounded multipliers --- in other words, the identity above makes sense in
  the f.-d. algebra $p_r C_0(\FO_n)\simeq B(H_r)$ for any $r$. But by definition
  of $\psi$ we have $T'_n = (\id\ot \psi)\circ\Delta$, and using the identity
  $(\id\ot\Delta)(V) = V_{12}V_{13}$ we can write
  \begin{displaymath}
    f_X^* f_X = \Tr(X^*X)\times (\id\ot h\ot \psi)(V^*_{12}V_{12}V_{13}) 
    = \Tr(X^*X)\times (\id\ot \psi)(V).
  \end{displaymath}
  Finally we have $(p_r\ot \psi)(V) = c_r p_r$ with $c_r = U'_r(n)/U_r(n)$ by
  the computation of $\psi$ before Lemma~\ref{lem_asympt}, and the properness
  results from that lemma.
\end{proof}

Recall the construction of the bimodule $K_\pi$ associated to a
$*$-representation $\pi : C^*(\GG) \to B(H_\pi)$. We put $K_\pi = H \ot H_\pi$
where $H$ is the GNS space of the Haar state $h$. The space $K_\pi$ is endowed
with two representations, $\tilde\pi = (\lambda\ot\pi)\Delta' : C^*_\red(\FO_n)
\to B(K_\pi)$ and $\tilde\rho = \lambda^{\mathrm{op}}\ot 1 :
C^*_\red(\FO_n)^{\mathrm{op}} \to B(K_\pi)$, where we put
$\lambda^{\mathrm{op}}(x)\Lambda_h(y) = \Lambda_h(yx)$.

\begin{lemma} \label{lem_bimod} The space $L^2(\tilde M)$, viewed as an
  $M\!\hyph M\!\hyph$bimodule via the left and right actions of $\iota(M) =
  \Delta(M)$, is isomorphic to the $M\!\hyph M\!\hyph$bimodule $K_\pi$ naturally
  associated with the adjoint representation $\pi=\ad$ of $\GG$. The trivial
  part $\CC\xi_0$ of the adjoint representation corresponds to the trivial
  sub-bimodule $\iota(M) \subset \tilde M$.
\end{lemma}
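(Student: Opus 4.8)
The plan is to exhibit an explicit unitary intertwiner between the Hilbert space $L^2(\tilde M) = L^2(M\vnot M)$ and the bimodule $K_{\ad} = H\ot H$, and to check that it transports the left and right $M$-actions appropriately. Since $\tilde M = M\vnot M$ acts standardly on $H\ot H$ via $\Lambda_{h\ot h}(x\ot y) = \Lambda_h(x)\ot\Lambda_h(y)$, the underlying Hilbert space $L^2(\tilde M)$ is simply $H\ot H$, which is already the underlying space of $K_{\ad}$. So the content is entirely in matching up the two bimodule structures: on the $\tilde M$-side the left and right actions are $\iota(x)=\Delta(x)$ acting by left, resp. right, multiplication on $\tilde M$; on the $K_{\ad}$-side they are $\tilde\pi = (\lambda\ot\ad)\Delta'$ and $\tilde\rho = \lambda^{\mathrm{op}}\ot 1$ with $\pi=\ad$.

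\textbf{The key computation.}
First I would write out the left action on the $\tilde M$-side: for $x\in\CC[\FO_n]$ the left multiplication by $\iota(x)=\Delta(x)=\sum x_{(1)}\ot x_{(2)}$ on $\Lambda_{h\ot h}(y\ot z)$ gives $\sum \Lambda_h(x_{(1)}y)\ot\Lambda_h(x_{(2)}z)$. On the $K_{\ad}$-side, using the definition $\tilde\pi=(\lambda\ot\ad)\Delta'$ and the fact that $\ad(w)\Lambda_h(z)=\sum\Lambda_h(w_{(1)}z(f_1*S(w_{(2)})))$ from the formulas preceding Lemma~\ref{lem_inv_line}, one computes $\tilde\pi(x)(\Lambda_h(y)\ot\Lambda_h(z))$ and compares. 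The point is that the identity unitary on $H\ot H$ will \emph{not} directly intertwine the two structures; rather one needs the unitary $1\ot U$ (with $U$ the implementation of $\rho$) or a related twist coming from the passage $\ad = \lambda\cdot\rho$. I would therefore guess the intertwiner is $1\ot U$ and verify that it turns right multiplication by $\iota(M)$ into $\tilde\rho$ and left multiplication into $\tilde\pi$. Concretely, the right action by $\iota(x)$ is $\sum\Lambda_h(y x_{(1)})\ot\Lambda_h(z x_{(2)})$, and after applying $1\ot U$ the second leg involves $U\Lambda_h(zx_{(2)})=\Lambda_h(f_1*S(zx_{(2)}))$; the coproduct identity and the formula $U\Lambda_h(w)=\Lambda_h(f_1*S(w))$ should collapse this, using that $\lambda$ and $\rho$ commute, precisely to $\tilde\rho(x)=\lambda^{\mathrm{op}}\ot 1$ acting through the antipode. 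Matching the left action to $\tilde\pi$ is then the analogous, slightly longer, Hopf-algebra manipulation.

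\textbf{The trivial part.}
Having established the bimodule isomorphism, the statement about the trivial parts is almost immediate. On the $K_{\ad}$-side the trivial part of $\ad$ is $\CC\xi_0\subset H$ (the second tensor leg), so it corresponds to $H\ot\CC\xi_0$; under the intertwiner $1\ot U$ this is $H\ot\CC U\xi_0=H\ot\CC\xi_0$ since $U\xi_0=\xi_0$ in the unimodular case. On the $\tilde M$-side, I claim $H\ot\CC\xi_0$ is exactly $\overline{\iota(M)\xi_{0}\ot\xi_0}=L^2(\iota(M))$: indeed $\Delta(x)(\xi_0\ot\xi_0)$ traces out $\{\Lambda_h(x_{(1)})\ot\Lambda_h(x_{(2)})\}$, and by the counit property $\sum x_{(1)}\epsilon(x_{(2)})=x$ together with $\Lambda_h(x_{(2)})\in\CC\xi_0 \Leftrightarrow x_{(2)}\in\CC 1$, this subspace is precisely the closure of $\iota(M)$ applied to the cyclic vector. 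So the trivial sub-bimodule $\iota(M)\subset\tilde M$ matches $\CC\xi_0$, as asserted.

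\textbf{Main obstacle.}
The hard part will be the bookkeeping in the intertwiner computation: getting the antipode $S$, the Woronowicz character $f_1$, and the flip/$U$ twist to cancel correctly so that right multiplication becomes exactly $\lambda^{\mathrm{op}}\ot 1$ and left multiplication becomes exactly $(\lambda\ot\ad)\Delta'$. I expect to lean on the facts, recorded earlier, that $W=(1\ot U)V(1\ot U)$, that $A=VW$ is the corepresentation of $\ad$, and that $\Delta'$ is a genuine $*$-homomorphism by Fell absorption, so that the whole identification can be phrased as an equality of corepresentations rather than a vector-by-vector check; verifying $(\id\ot\tilde\pi)(v)$ and $(\id\ot\tilde\rho)(v)$ against the left/right regular corepresentations of $\iota(v)$ should be the cleanest route.
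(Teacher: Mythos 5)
Your overall strategy --- exhibit an explicit unitary on $H\ot H$ intertwining the two bimodule structures, then read off the trivial part --- is the same as the paper's, but your concrete guess for the intertwiner, $1\ot U$, is wrong, and the error is not mere bookkeeping. In the Kac case $U\Lambda_h(z)=\Lambda_h(S(z))$, so under $1\ot U$ the left multiplication by $\iota(x)=\Delta(x)$ transports to $\sum\Lambda_h(x_{(1)}y)\ot\Lambda_h(wS(x_{(2)}))$ on vectors $\Lambda_h(y)\ot\Lambda_h(w)$, i.e.\ to $(\lambda\ot\rho)\Delta(x)$, \emph{not} to $\tilde\pi=(\lambda\ot\ad)\Delta(x)$, which would require an extra $x_{(2)}$ acting on the left of the second leg; similarly the right multiplication transports to $\sum\Lambda_h(yx_{(1)})\ot\Lambda_h(S(x_{(2)})w)$, not to $\lambda^{\mathrm{op}}\ot 1$. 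No Hopf-algebra manipulation removes these extra legs. The missing ingredient is the multiplicative unitary $V$: the paper's intertwiner is $V(1\ot U)$, which sends $(\Lambda_h\ot\Lambda_h)(x\ot y)$ to $(\Lambda_h\ot\Lambda_h)(\Delta(x)(1\ot S(y)))$. It is precisely the application of $\Delta$ to the first tensor factor that, via coassociativity, turns left multiplication by $\Delta(z)$ into $(z_{(1)}\ot z_{(2)})\Delta(x)(1\ot S(y)S(z_{(3)}))$, i.e.\ $(\lambda\ot\ad)\Delta(z)$, and turns right multiplication into $\Delta(x)(z_{(1)}\ot z_{(2)}S(z_{(3)})S(y))=\Delta(x)(z\ot S(y))$ by the antipode identity $\sum z_{(2)}S(z_{(3)})=\epsilon(z_{(2)})1$, i.e.\ $\lambda^{\mathrm{op}}(z)\ot 1$.

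The trivial-part step also fails as you wrote it: it is simply false that $\overline{\iota(M)(\xi_0\ot\xi_0)}=H\ot\CC\xi_0$ inside $H\ot H$. For $x=v_{ij}$ a coefficient of a nontrivial irreducible one has $\Delta(v_{ij})(\xi_0\ot\xi_0)=\sum_k\Lambda_h(v_{ik})\ot\Lambda_h(v_{kj})$, whose components are orthogonal to $H\ot\CC\xi_0$ since $h(v_{kj})=0$; so $L^2(\iota(M))$ is a ``diagonal'' subspace meeting $H\ot\CC\xi_0$ only in $\CC(\xi_0\ot\xi_0)$. Since $1\ot U$ preserves $H\ot\CC\xi_0$ (as $U\xi_0=\xi_0$), your proposed unitary cannot carry $L^2(\iota(M))$ onto the trivial part, which confirms it is not the right intertwiner. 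With the correct unitary the identification is a one-line computation: $V(1\ot U)(\Lambda_h\ot\Lambda_h)(\Delta(z))=(\Lambda_h\ot\Lambda_h)(z_{(1)}\ot z_{(2)}S(z_{(3)}))=\Lambda_h(z)\ot\xi_0$, so $L^2(\iota(M))$ maps exactly onto $H\ot\CC\xi_0$. Your closing instinct --- to work at the level of corepresentations using $W=(1\ot U)V(1\ot U)$ and $A=VW$ --- is sound and would have pointed you to $V(1\ot U)$, but as written the proposal both selects a unitary that intertwines neither action and asserts a false subspace identity in its place.
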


\begin{proof}
  We consider the unitary $V(1\ot U) : L^2(\tilde M) = H\ot H \to H\ot
  H$. Recall that, in the Kac case, $V{(\Lambda_h\ot\Lambda_h)}{(x\ot y)} =
  (\Lambda_h\ot\Lambda_h)(\Delta(x)(1\ot y))$ and $U\Lambda_h(y) =
  \Lambda_h(S(y))$ for $x$, $y\in\CC[\GG]$, so that
  \begin{displaymath}
    V(1\ot U)(\Lambda_h\ot\Lambda_h)(x\ot y) =
    (\Lambda_h\ot\Lambda_h)(\Delta(x)(1\ot S(y))).
  \end{displaymath}
  Recall that $\ad(z)\Lambda_h(y) = \Lambda_h(z_{(1)}y S(z_{(2)}))$.  Then
  the left module structure reads:
  \begin{align*}
    V(1\ot U)(\Lambda_h\ot\Lambda_h)(\Delta(z)(x\ot y)) &=
    (\Lambda_h\ot\Lambda_h) ((z_{(1)}\ot z_{(2)})\Delta(x)
    (1\ot S(y)S(z_{(3)}))) \\
    &= (\lambda\ot\ad)\Delta(z) V(1\ot U) (\Lambda_h\ot\Lambda_h) (x\ot y).
  \end{align*}
  On the other hand we compute:
  \begin{align*}
    V(1\ot U)(\Lambda_h\ot\Lambda_h)((x\ot y)\Delta(z)) &=
    (\Lambda_h\ot\Lambda_h) (\Delta(x)(z_{(1)}\ot z_{(2)})
    (1\ot S(z_{(3)})S(y))) \\
    &= (\Lambda_h\ot\Lambda_h)(\Delta(x)(z\ot S(y))) \\
    &= (\lambda^{\mathrm{op}}(z)\ot\id) V(1\ot U) (\Lambda_h\ot\Lambda_h) (x\ot
    y).
  \end{align*}
  This shows that $V(1\ot U)$ yields an isomorphism $L^2(\tilde M) \simeq
  K_{\ad}$ as $M\hyph M\hyph$bimodules. Moreover, putting $x=y=1$ we see that
  $V(1\ot U)(\Lambda_h\ot\Lambda_h)\iota(M) = \Lambda_h(M)\ot\xi_0$.
\end{proof}

Recall now from \cite{OzawaPopa_AtMost1Cartan2} that a discrete group $G$ has
Property strong (HH) if it admits a proper cocycle with values in a
representation weakly contained in the regular representation. These notions
make sense in the quantum case, and combining the lemma above with
Theorem~\ref{thm_ad_reg} and Lemma~\ref{lem_deriv_proper} we obtain:

\begin{corollary}
  The discrete quantum groups $\FO_n$ have Property strong (HH).
\end{corollary}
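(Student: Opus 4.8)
The plan is to exhibit one of the derivations $\delta_X$ constructed in this subsection as the desired proper cocycle, and then to read off, through the bimodule dictionary, that it lives in a representation weakly contained in the regular one. Fix any $X\in\mathfrak{o}_n$ with $X\neq 0$. As observed just before Lemma~\ref{lem_deriv_proper}, the map $\delta_X : \CC[\FO_n]\to\tilde M$ is a derivation for the $\CC[\FO_n]$-bimodule structure induced by $\iota=\Delta$, that is, a $1$-cocycle for the associated correspondence. Hence the only points left to establish are that the ambient representation is weakly contained in $\lambda$ and that $\delta_X$ is proper.

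For the first point I would invoke Lemma~\ref{lem_bimod}, which identifies $L^2(\tilde M)$, as an $M\hyph M\hyph$bimodule, with the correspondence $K_{\ad}$ attached to the adjoint representation, the trivial sub-bimodule $\iota(M)$ corresponding to the line $\CC\xi_0$. By Lemma~\ref{lem_deriv_proper}, $\delta_X$ takes its values in the orthogonal complement $\tilde M^\circ$ of $\iota(M)$; under the isomorphism of Lemma~\ref{lem_bimod} this complement is exactly the sub-bimodule associated with the non-trivial part $\ad^\circ$ of the adjoint representation. Thus $\delta_X$ is a cocycle valued in the correspondence $K_{\ad^\circ}$, i.e.\ in the representation $\ad^\circ$.

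It then remains to apply Theorem~\ref{thm_ad_reg}: since $\FO_n$ is unimodular for $n\geq 3$, the representation $\ad^\circ$ factors through $\lambda$, so it is weakly contained in the regular representation; translated back through the dictionary, $K_{\ad^\circ}$ is weakly contained in the coarse correspondence $K_\lambda = H\ot H$. This is precisely the ambient-representation requirement in the definition of Property strong (HH) from \cite{OzawaPopa_AtMost1Cartan2}. The properness of the cocycle is furnished directly by the last assertion of Lemma~\ref{lem_deriv_proper}: one has $p_r f_X^* f_X \geq c_r p_r$ with $c_r = \Tr(X^*X)\,U_r'(n)/U_r(n)\to\infty$, the divergence being exactly the content of Lemma~\ref{lem_asympt}. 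Assembling these three ingredients yields a proper cocycle in a representation weakly contained in $\lambda$, which is the claim.

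The one step demanding genuine care---the main obstacle---is the faithful transcription of ``proper cocycle in a representation weakly contained in the regular representation'' into the bimodule language for quantum groups: one must check that the correspondence $K_\pi$ attached to a representation $\pi$ factoring through $\lambda$ is indeed weakly contained in $K_\lambda$, and that, under the identifications of Lemma~\ref{lem_bimod}, the derivation $\delta_X$ really is the $1$-cocycle carried by $K_{\ad^\circ}$ rather than by the full correspondence $K_{\ad}$ (this is where the vanishing $E\circ\delta_X=0$ from Lemma~\ref{lem_deriv_proper} is essential, since it places the cocycle in the orthogonal complement of the trivial part). Once this dictionary is in place, the corollary is simply the conjunction of Lemmas~\ref{lem_bimod} and~\ref{lem_deriv_proper} with Theorem~\ref{thm_ad_reg}.
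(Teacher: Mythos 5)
Your proposal is correct and follows essentially the same route as the paper: the authors obtain the corollary precisely by combining Lemma~\ref{lem_bimod} (identifying $L^2(\tilde M)$ with $K_{\ad}$ and $\iota(M)$ with the trivial part $\CC\xi_0$), Lemma~\ref{lem_deriv_proper} (the derivation $\delta_X$ lands in $\tilde M^\circ$ and is proper, via Lemma~\ref{lem_asympt}), and Theorem~\ref{thm_ad_reg} ($\ad^\circ$ factors through $\lambda$). Your attention to the translation of the derivation into a genuine cocycle valued in $\ad^\circ$ matches the paper's subsequent remark, where the explicit cocycle $c_X$ with $V(1\ot U)\delta_X = (\Lambda_h\ot c_X)\Delta$ and $f_X^*f_X = g_X^*g_X$ is spelled out.
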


\begin{remark}
  Lemma~\ref{lem_bimod} works also at the level of the derivations $\delta_X$
  and yields a formula for the associated cocycles with values in the adjoint
  representation $\pi=\ad$. More precisely, a simple computation shows that
  $V(1\ot U)\delta_X$ is of the form $(\Lambda_h\ot c_X)\Delta$, with $c_X :
  \CC[\FO_n] \to H$ given by $c_X = \Lambda_h m(d_X\alpha_g\ot S)\Delta$, or in
  other terms:
  \begin{displaymath}
    c_X(v_{ij}^r) = \sum (d_X u_{kl}^r) \times \Lambda_h(v_{ik}^rv_{jl}^{r*}).
  \end{displaymath}

  The fact that $V(1\ot U)\delta_X$ is a derivation implies that $c_X$ is a
  cocycle, i.e. $c_X(xy) = \pi(x)c_X(y) + c_X(x)\epsilon(y)$ for $x$, $y \in
  \CC[\FO_n]$. Denoting $g_X = (\id\ot c_X)(V)$ the unbounded multiplier of the
  Hilbert module $C_0(\FO_n)\ot H$ associated with $c_X$, we have $f_X^*f_X =
  g_X^*g_X$. Hence Lemma~\ref{lem_deriv_proper} also shows that the cocycle
  $c_X$ is proper.

  For the generators $v_{ij}$ of $\CC[\FO_n]$ we have $c_X(v_{ij}) = \sum
  X_{kl}\Lambda_h(v_{ik}v_{jl})$. For a particular choice of $X$ one gets
  e.g. $c(v_{ij}) = \Lambda_h(v_{i1}v_{j2} - v_{i2}v_{j1})$, and the other
  values of $c$ can be deduced recursively using the cocycle relation.
\end{remark}

\begin{remark}
  In the classical case, Property (HH) clearly implies the Property
  $\mathcal{QH}_{\mathrm{reg}}$ of \cite{ChifanSinclair}, which is in turn
  equivalent, for exact groups, to bi-exactness and to Property (AO)$^+$, see
  \cite[Proposition~2.7]{PopaVaes_UniqueCartanHyper} and
  \cite[Chapter~15]{BrownOzawa}. In the quantum case, it is proved in
  \cite{Isono_BiExact} that the existence of a ``sufficiently nice'' boundary
  action implies Property (AO)$^+$, but the connection with quasi-cocycles
  remains to be explored. Note that Property (AO)$^+$ for $\FO_n$ is established
  in \cite{Vergnioux_Cayley}.
\end{remark}

\subsection{Strong solidity}

We first recall the following result due to Ozawa, Popa
\cite{OzawaPopa_AtMost1Cartan} and Sinclair \cite{Sinclair_StrongSolidity}.

\begin{theorem} \label{OP07} Let $M$ be a tracial von Neumann algebra which is
  weakly amenable and admits the following deformation property: there exists a
  tracial von Neumann algebra $\tilde M$, a trace preserving inclusion $M\subset
  \tilde M$ and a one-parameter group $(\alpha_t)_{t\in\RR}$ of trace-preserving
  automorphisms of $\tilde M$ such that
  \begin{itemize}
  \item $\lim_{t\rightarrow 0}\| \alpha_t(x)-x\| _2=0$ for all $x\in M$.
  \item $ {\strut}_M\!\left(L^2(\tilde M)\ominus L^2(M)\right)\!{\strut}_M
    \prec\, {\strut}_M\!\left(L^2(M)\ot L^2(M)\right)_{M}$.
  \item $E_M\circ\alpha_t$ is compact on $L^2(M)$ for all $t$ small enough.
  \end{itemize}
  Then for any diffuse amenable von Neumann subalgebra $P\subset M$ we have that
  $\mathcal{N}_{M}(P)''$ is amenable --- in other words $M$ is strongly solid.
\end{theorem}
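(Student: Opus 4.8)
The plan is to run the deformation/rigidity machinery of Popa in the form given by Ozawa and Popa, the three hypotheses being tailored precisely to that argument. Fix a diffuse amenable subalgebra $P\subset M$ and set $N=\mathcal N_M(P)''$; we must show that $N$ is amenable. First I would replace $(\alpha_t)$ by an associated \emph{s-malleable} deformation (doubling $\tilde M$ and adjoining a period-two flip), as in \cite{Sinclair_StrongSolidity}, so that Popa's transversality inequality $\|\alpha_{2t}(x)-x\|_2\le 2\,\|\alpha_t(x)-E_M\alpha_t(x)\|_2$ holds for $x\in M$; this is what converts control of the \emph{off-diagonal} part $\alpha_t(x)-E_M\alpha_t(x)\in L^2(\tilde M)\ominus L^2(M)$ into control of the deformation itself. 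The second standing input is the Ozawa--Popa weak compactness principle \cite{OzawaPopa_AtMost1Cartan}: since $M$ is weakly amenable and $P$ is diffuse amenable, the conjugation action of $\mathcal N_M(P)$ on $P$ is weakly compact, i.e. there is a net of unit vectors $(\xi_\nu)$ in the coarse bimodule $L^2(M)\ot\overline{L^2(M)}$ that are asymptotically central for $P$ on the left leg, asymptotically invariant under $u\ot\bar u$ for $u\in\mathcal N_M(P)$, and tracial.

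The argument then rests on a dichotomy according to whether $\alpha_t\to\id$ uniformly on the unit ball $(N)_1$. Suppose first that it does: then a fortiori $\alpha_t\to\id$ uniformly on $(P)_1$, so by contractivity of $E_M$ the compact operators $T_t=E_M\alpha_t$ (compact for small $t$ by the third hypothesis) satisfy $\|T_t(\hat u)-\hat u\|_2\to 0$ uniformly over the unitaries $u\in P$. But a diffuse $P$ contains a diffuse abelian subalgebra, hence a sequence of unitaries $u_k$ with $\hat u_k\to 0$ weakly in $L^2(M)$; compactness of $T_t$ forces $\|T_t(\hat u_k)\|_2\to 0$, whereas uniform convergence gives $\|T_t(\hat u_k)\|_2\ge 1-\epsilon$ for all $k$. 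This contradiction shows that uniform convergence on $(N)_1$ never occurs.

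We are therefore in the opposite case: there is $\epsilon_0>0$ with $\sup_{u\in\mathcal U(N)}\|\alpha_{2t}(u)-u\|_2\ge\epsilon_0$ for all small $t$, so by transversality the off-diagonal parts admit a uniform lower bound in $\mathcal H=L^2(\tilde M)\ominus L^2(M)$. Here I would feed the weakly compact net $(\xi_\nu)$ into the deformation: combining the asymptotic $P$-centrality and $\mathcal N_M(P)$-invariance of the $\xi_\nu$ with the bimodularity of the map $x\mapsto \alpha_t(x)-E_M\alpha_t(x)$, one manufactures a net of unit vectors in $\mathcal H$ that is asymptotically central for the whole of $N$ (centrality for $P$ together with invariance under the normalizing unitaries generate the $N$-bimodule structure). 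Since by the second hypothesis $\mathcal H$ is weakly contained in the coarse $M$-bimodule, its restriction to $N$ is weakly contained in the coarse $N$-bimodule; the existence of asymptotically central tracial unit vectors there is exactly Connes' criterion for injectivity, so $N$ is amenable. As $P$ was an arbitrary diffuse amenable subalgebra, $M$ is strongly solid.

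The routine parts are the transversality set-up and the compactness contradiction; the genuine difficulty lies in the construction of the last paragraph, namely transferring the failure of uniform convergence into honest almost-central vectors for $N$ inside the weakly coarse bimodule. This is the heart of the Ozawa--Popa spectral-gap argument: one must carefully intertwine the deformation's off-diagonal vectors with the weakly compact net so that the resulting vectors are simultaneously of controlled norm (bounded below by $\epsilon_0$, via transversality) and asymptotically $N$-central (via weak compactness), all while staying inside $\mathcal H$. Making these estimates uniform over the normalizing unitaries, and extracting a weak-$*$ limit that survives as a genuine hypertrace, is where the weak amenability hypothesis is really used and is the step I expect to be most delicate.
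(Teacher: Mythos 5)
Your architecture is essentially a reconstruction of the internals of the results that the paper merely assembles: the paper proves this theorem by citing \cite[Theorem~4.9]{OzawaPopa_AtMost1Cartan} with $k=1$ and $Q=Q_1=\CC$ (so that ``compactness over $Q$'' for $E_M\circ\alpha_t$ means compactness on $L^2(M)$, and $P\npreceq_M Q$ means exactly that $P$ is diffuse), obtaining weak compactness of the action of $\mathcal{N}_M(P)$ on $P$ from weak amenability via \cite[Theorem~B]{Ozawa_Examples} --- which you invoke correctly --- and then, because the second bullet only gives \emph{weak} containment in the coarse bimodule whereas Ozawa--Popa assume strong containment in a multiple of it, observing that the proof of \cite[Theorem~4.9]{OzawaPopa_AtMost1Cartan} actually furnishes vectors $\zeta\in K\ot L^2(M)$, with $K=L^2(\tilde M)\ominus L^2(M)$, satisfying $\|p\zeta\|_2\geq\|p\|_2/8$, $\|[u\ot\bar u,\zeta]\|_2<\epsilon/2$ for $u$ in a finite set of normalizers, and $\|x\zeta\|_2\leq\|x\|_2$; this says $K$ is left amenable over $\mathcal{N}_M(P)''\subset M$, and \cite[Theorem~3.2]{Sinclair_StrongSolidity} then yields amenability. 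Your paragraph ruling out uniform convergence via compactness of $T_t=E_M\alpha_t$ and a sequence of unitaries tending weakly to $0$ is correct and is precisely the role played by $Q=\CC$.

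There are, however, two genuine gaps. First, Popa's transversality inequality requires an \emph{s-malleable} deformation, and the hypotheses of the theorem provide none: ``doubling $\tilde M$ and adjoining a period-two flip'' is not an automatic construction for an abstract one-parameter group $(\alpha_t)$ --- you would need an involution $\beta$ of (an enlargement of) $\tilde M$ fixing $M$ pointwise with $\beta\alpha_t=\alpha_{-t}\beta$, which exists in the paper's concrete application (the involution $B$ of Example~\ref{example_def}) but is not available, and is not claimed, at the level of the abstract statement. The Ozawa--Popa proof avoids transversality altogether by running the dichotomy directly on the weakly compact net $(\xi_\nu)$: for fixed small $t$, either $(E_M\alpha_t\ot 1)$ retains most of the mass of the $\xi_\nu$ (which, with compactness and asymptotic $P$-centrality, contradicts diffuseness), or the off-diagonal parts $((1-E_M)\alpha_t\ot1)\xi_\nu$ have norm bounded below. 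Second, and more seriously, the output of your final step is misstated: the weak compactness net is intrinsically two-legged, and what the argument produces are vectors in $K\ot\overline{L^2(M)}$ almost invariant under $u\ot\bar u$ for $u\in\mathcal{N}_M(P)$, \emph{not} asymptotically $N$-central unit vectors in $\mathcal{H}=K$ itself. Almost invariance under $u\ot\bar u$ does not make the first leg almost central, and vectors of the kind you claim would prove amenability of $N$ directly, rendering the entire weak-compactness apparatus superfluous. The correct conclusion from the two-legged vectors is left amenability of $K$ over $N\subset M$, and converting this, together with weak containment of $K$ in the coarse bimodule, into amenability of $N$ is exactly \cite[Theorem~3.2]{Sinclair_StrongSolidity} --- the very ingredient the paper adds on top of Ozawa--Popa to pass from strong to weak containment. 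Since this misstatement occurs at the step you yourself identify as the heart of the proof, it is a genuine gap rather than a cosmetic slip.
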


\begin{proof}
  If $K = L^2(\tilde M)\ominus L^2(M)$ is (strongly) contained in an
  amplification of the coarse bimodule, this is a particular case of
  \cite[Theorem~4.9]{OzawaPopa_AtMost1Cartan}, with $k=1$, $Q = Q_1 = \CC$ and
  $\mathcal{G} = \mathcal{N}_M(P)$. Indeed in that case ``compactness over $Q$''
  for $E_M\circ\alpha_t$ means that its extension to $L^2(M)$ is compact,
  $L^2\langle M, e_{Q_1}\rangle$ is the coarse bimodule $L^2(M)\ot L^2(M)$, so
  that the conclusions of \cite[Proposition~4.8]{OzawaPopa_AtMost1Cartan} hold
  also in our case. On the other hand $P \npreceq_M Q$ means that $P$ is
  diffuse. Moreover if $M$ is weakly amenable and $P$ is amenable, then the
  action of $\mathcal{G} = \mathcal{N}_M(P)$ on $P$ is weakly compact by
  \cite[Theorem~B]{Ozawa_Examples}. Hence the hypotheses of
  \cite[Theorem~4.9]{OzawaPopa_AtMost1Cartan} are satisfied, and we can conclude
  that $N = \mathcal{N}_M(P)''$ is amenable relative to $Q$ inside $M$, which
  just means that $N$ is amenable in our case.

  Note that the proof of \cite[Theorem~4.9]{OzawaPopa_AtMost1Cartan} shows the
  existence, for all non-zero central projection $p \in M$, all $F
  \subset\mathcal{N}_M(P)$ finite and all $\epsilon > 0$ the existence of a
  vector $\zeta \in K\ot L^2(M)$ such that $\|p\zeta\|_2 \geq \|p\|_2/8$,
  $\|[u\ot \bar u,\zeta]\|_2 < \epsilon /2$ for all $u\in F$ and
  $\|x\zeta\|_2\leq \|x\|_2$ for all $x\in M$.  In particular one can then show
  as in \cite[Theorem~3.1]{Sinclair_StrongSolidity} that $K$ is left amenable
  over $\mathcal{N}_M(P)''\subset M$. Now, if $K$ is only weakly contained in
  the coarse bimodule, \cite[Theorem~3.2]{Sinclair_StrongSolidity} still allows
  to conclude that $\mathcal{N}_M(P)''$ is amenable.
\end{proof}

Now by Proposition~\ref{prp_proj_def}, Lemma~\ref{lem_bimod} and Theorem
\ref{thm_ad_reg} one can apply the preceding Theorem to the deformation of
$\Ll(\FO_n)$ presented in Section~\ref{sec_deformation}. Moreover the weak
amenability assumption is satisfied by \cite[Theorem~6.1]{Freslon_CBAP_AO} ---
in fact Freslon also proves that the Cowling-Haagerup constant of $\FO_n$ equals
$1$, so that the invocation of \cite{Ozawa_Examples} in the proof of
Theorem~\ref{OP07} is not necessary in this case. Finally we obtain:

\begin{theorem}
  For all $n\geq 3$, the ${\rm II}_1$ factor $\Ll(\FO_n)$ is strongly solid.
\end{theorem}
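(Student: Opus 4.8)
The plan is to verify, for the concrete deformation of $M=\Ll(\FO_n)$ constructed in Section~\ref{sec_deformation}, the three hypotheses of Theorem~\ref{OP07}, and then to invoke that theorem directly. I take $\tilde M = M\vnot M$ with the trace-preserving inclusion $\iota=\Delta\colon M\hookrightarrow\tilde M$, and for the one-parameter group of trace-preserving automorphisms I use $(A_t)_t$ from Example~\ref{example_def}, where $A_t=\alpha_{g_t}\ot\id$ with $g_t=\id_{n-2}\oplus R_t$. This is a genuine one-parameter group because $(g_t)_t$ is, and each $A_t$ preserves $h\ot h$ since $\alpha_{g_t}$ preserves the Haar state $h$; so the family does act by trace-preserving automorphisms of $\tilde M$.

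I would then check the three conditions in turn. Weak amenability of $M$ is \cite[Theorem~6.1]{Freslon_CBAP_AO}; since moreover the Cowling--Haagerup constant of $\FO_n$ equals $1$, the appeal to \cite{Ozawa_Examples} inside the proof of Theorem~\ref{OP07} can be dispensed with. For the $\|\cdot\|_2$-continuity $\lim_{t\to 0}\|A_t(x)-x\|_2=0$, it suffices to treat $x=\Delta(y)$ with $y\in\CC[\FO_n]$: the explicit formula $A_t\Delta(v_{ij}^r)=\sum_{k,l} v_{ik}^r\,u_{kl}^r(g_t)\ot v_{lj}^r$ converges to $\Delta(v_{ij}^r)$ as $u^r(g_t)\to\id$, and since the $A_t$ are $\|\cdot\|_2$-isometries this extends to all of $M$. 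The compactness condition is supplied by Proposition~\ref{prp_proj_def} together with Example~\ref{example_def}: under $\iota$ the operator $E_M\circ A_t$ on $L^2(M)$ is exactly $T_s$ with $s=\Tr(g_t)=n-2+2\cos t$, which for $0<t<t_0$ is $L^2$-compact by Brannan's estimate \cite[Proposition~4.4.1]{Brannan_aT}.

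The one substantive point is the coarse bimodule condition ${}_M(L^2(\tilde M)\ominus L^2(M))_M \prec {}_M(L^2(M)\ot L^2(M))_M$, and this is where I expect the weight of the argument to lie. By Lemma~\ref{lem_bimod} the $M$-$M$-bimodule $L^2(\tilde M)$ is isomorphic to the bimodule $K_{\ad}$ attached to the adjoint representation, with the trivial part $\CC\xi_0$ corresponding to $\iota(M)=L^2(M)$; hence $L^2(\tilde M)\ominus L^2(M)$ is the bimodule $K_{\ad^\circ}$ attached to $\ad^\circ$. By Theorem~\ref{thm_ad_reg}, which is the technical core of the paper, $\ad^\circ$ factors through $\lambda$, i.e.\ $\ad^\circ\prec\lambda$. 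It then remains to use that the correspondence $\pi\mapsto K_\pi$ is monotone for weak containment and, via Fell's absorption principle $(\lambda\ot\lambda)\Delta'\cong\lambda\ot 1$, sends $\lambda$ to a bimodule weakly equivalent to the coarse bimodule $L^2(M)\ot L^2(M)$; thus $\ad^\circ\prec\lambda$ gives $K_{\ad^\circ}\prec$ coarse, which is exactly the required weak containment. Once this is in place, all hypotheses of Theorem~\ref{OP07} hold and it applies verbatim, yielding that $\Ll(\FO_n)$ is strongly solid.
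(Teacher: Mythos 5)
Your proposal is correct and coincides with the paper's own proof: the paper likewise verifies the hypotheses of Theorem~\ref{OP07} for the deformation $(A_t)_t$ of Section~\ref{sec_deformation}, using Proposition~\ref{prp_proj_def} with Example~\ref{example_def} for $L^2$-compactness, Lemma~\ref{lem_bimod} together with Theorem~\ref{thm_ad_reg} for the weak containment of $L^2(\tilde M)\ominus L^2(M)$ in the coarse bimodule, and \cite[Theorem~6.1]{Freslon_CBAP_AO} for weak amenability (including the remark that the Cowling--Haagerup constant equals $1$, making \cite{Ozawa_Examples} unnecessary). The details you supply --- the $\|\cdot\|_2$-continuity on the dense subalgebra $\Delta(\CC[\FO_n])$ and the monotonicity of $\pi\mapsto K_\pi$ under weak containment with $K_\lambda$ identified with the coarse bimodule via Fell absorption --- are exactly the routine verifications the paper leaves implicit.
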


Note that this result was already proved in \cite{Isono_NoCartan} using the more
recent approach from \cite{PopaVaes_UniqueCartanHyper} to strong solidity, and
the strong Akemann-Ostrand Property, established in
\cite[Theorem~8.3]{Vergnioux_Cayley} for free quantum groups. Finally, it seems
likely that strong solidity can also be deduced from Property strong (HH) as in
\cite[Corollary~B]{OzawaPopa_AtMost1Cartan2}, by adapting Peterson's techniques
to the proper derivation $\delta_X : \CC[\FO_n] \to \tilde M$ in the quantum
setting. Notice in particular that $\Delta_X = \delta_X^*\delta_X$ is a
``central multiplier'' of $\CC[\FO_n]$ by the proof of
Lemma~\ref{lem_deriv_proper}, so that the passage from the classical to the
quantum setting is probably straightforward.

\bibliographystyle{alpha} \bibliography{propertyHH}

\bigskip

\footnotesize

\noindent {\sc Pierre FIMA \\ \nopagebreak
  Univ Paris Diderot, Sorbonne Paris Cit\'e, IMJ-PRG, UMR 7586, F-75013, Paris, France \\
  Sorbonne Universit\'es, UPMC Paris 06, UMR 7586, IMJ-PRG, F-75005, Paris, France \\
  CNRS, UMR 7586, IMJ-PRG, F-75005, Paris, France \\
  \em E-mail address: \tt pfima@math.jussieu.fr}

\bigskip

\noindent
{\sc Roland VERGNIOUX \\
  Universit\'es de Normandie, France \\
  Universit\'e de Caen, LMNO, 14032 Caen, France \\
  CNRS, UMR 6139, 14032 Caen, France \\
  \em E-mail address: \tt roland.vergnioux@unicaen.fr}

\end{document}